\theoremstyle{plain}
\newtheorem{thm}{Theorem}[section]
\newtheorem{lemma}[thm]{Lemma}
\newtheorem{remark}{Remark}[section]
\theoremstyle{remark}
\newcommand{\bbV}{{\bf V}}
\newcommand{\bbU}{{\bf U}}
\newcommand{\bbD}{{\bf D}}
\newcommand{\bbA}{{\bf A}}
\newcommand{\bbX}{{\bf X}}
\newcommand{\bbP}{{\bf P}}
\newcommand{\bbZ}{{\bf Z}}
\newcommand{\bbS}{{\bf S}}
\newcommand{\bbB}{{\bf B}}
\newcommand{\bSi}{\pmb \Sigma}
\newcommand{\bbalp}{{\bf \alpha}}
\newcommand{\bgl}{{\bf \lambda}}
\newcommand{\bGma}{{\bf \Gamma}}
\newcommand{\bPsi}{{\bf \Psi}}
\newcommand{\bgO}{{\bf \Omega}}
\newcommand{\bgL}{{\bf \Lambda}}
\newcommand{\bUps}{{\bf \Upsilon}}
\newcommand{\bbI}{{\bf I}}
\newcommand{\bbY}{{\bf Y}}
\newcommand{\bbT}{{\bf T}}
\newcommand{\bqn}{\begin{eqnarray*}}
	\newcommand{\eqn}{\end{eqnarray*}}
\newcommand{\rtr}{{\textrm{tr}}}
\newcommand{\rdiag}{{\textrm{diag}}}
\newcommand{\bqa}{\begin{eqnarray}}
	\newcommand{\eqa}{\end{eqnarray}}
\newcommand{\ep}{\varepsilon}
\newcommand{\al}{\alpha}
\newcommand{\CYRS}{\CYRS}
\def\Var{\mbox{var}}
\begin{document}

\begin{frontmatter}
\title{A CLT for the LSS of large dimensional sample covariance matrices with diverging spikes}
\runtitle{CLT for LSS with diverging spikes}

\begin{aug}
\author{\fnms{Zhijun} \snm{Liu}\ead[label=e1,mark]
{liuzj037@nenu.edu.cn}},
\author{\fnms{Jiang} \snm{Hu}\ead[label=e2,mark]{huj156@nenu.edu.cn}},
\author{\fnms{Zhidong} \snm{Bai}\ead[label=e3,mark]{baizd@nenu.edu.cn}},
\and 
\author{\fnms{Haiyan}
\snm{Song}\ead[label=e4,mark]{songhy716@nenu.edu.cn}}
\address{KLASMOE and School of Mathematics and Statistics, Northeast Normal University, China.
\printead{e1,e2,e3,e4}}

\end{aug}

\begin{abstract}
\textcolor{black}{ In this paper, we establish the central limit theorem (CLT) for linear spectral statistics (LSSs) of a large-dimensional sample covariance matrix when the population covariance matrices are involved with diverging spikes.
This constitutes a nontrivial extension of the Bai-Silverstein theorem (BST) (Ann Probab 32(1):553--605, 2004), a theorem that has strongly influenced the development of high-dimensional statistics, especially in the applications of random matrix theory to statistics. 
Recently, there has been a growing realization that the assumption of uniform boundedness of the population covariance matrices in the BST is not satisfied in some fields, such as economics, where the variances of principal components may diverge as the dimension tends to infinity. 
Therefore,  in this paper, we aim to eliminate this obstacle to applications of the BST.  Our new CLT accommodates spiked eigenvalues, which may either be bounded or tend to infinity. 
A distinguishing feature of our result is that the variance in the new CLT is related to both spiked eigenvalues and bulk eigenvalues, with dominance being determined by the divergence rate of the largest spiked eigenvalues.
The new CLT for LSS is then applied to test the hypothesis that the population covariance matrix is the identity matrix or a generalized spiked model. The asymptotic distributions of the corrected likelihood ratio test statistic and the corrected Nagao's trace test statistic are derived under the alternative hypothesis. Moreover, we present power comparisons between these two LSSs and Roy's largest root test. In particular, we demonstrate that except for the case in which the number of spikes is equal to one, the LSSs could exhibit higher asymptotic power than Roy's largest root test.}

\end{abstract}

\begin{keyword}[class=MSC]
\kwd[Primary ]{	60B20}
\kwd[; secondary ]{60F05}
\end{keyword}

\begin{keyword}
\kwd{Empirical spectral distribution}
\kwd{linear spectral statistic}
\kwd{random matrix}
\kwd{Stieltjes transform}
\end{keyword}
\end{frontmatter}

\section{Introduction}

We consider the general sample covariance matrix  $ \bbB_n=\frac{1}{n}\bbT_p\bbX_n\bbX_n^{\ast}\bbT_p^{\ast} $, where $ \bbX_n $ is a $ p\times n $ matrix with independent and identically distributed (i.i.d.) standardized entries $ \left\lbrace x_{ij}\right\rbrace _{1\leq i\leq p, 1\leq j \leq n} $, $ \bbT_p $ is a $p \times p$ deterministic matrix, $\bbT_p\bbX_n$ is considered a random sample from the population with the population covariance matrix $\bbT_p\bbT_p^{\ast}=\bSi$, and $^*$ represents
the complex conjugate transpose. In the sequel, we simply write $\bbB\equiv\bbB_n$, $\bbT\equiv\bbT_p$ and $\bbX\equiv\bbX_n$ when there is no confusion. \textcolor{black}{Let $ \lambda_{1}\geq\dots\geq\lambda_{p} $ be the eigenvalues of $\bbB$.} For a known test function $f$, we call $\sum_{j=1}^{p}f\left( \bgl_{j}\right) $ a linear spectral statistic (LSS) of $\bbB$.  Because most of the classical test statistics in multivariate statistical analysis are associated with the eigenvalues of sample covariance matrices,  LSSs are remarkable tools in many statistical problems (see  \cite{Anderson03I,YaoB15L} for details). \textcolor{black}{ Through extensive study of high-dimensional data, 
 it has been discovered that the distributions of LSSs significantly differ between low-dimensional and high-dimensional data.}
For example, in the low-dimensional setting, Wilks' theorem (see \cite{Wilks38L}) provides the $\chi^2$ approximation for the likelihood ratio test  (LRT) statistic, which is a kind of LSS. However,  when $ p $ is large compared with the sample size $ n $, the LRT statistic exhibits Gaussian fluctuations (see \cite{10.1214/09-AOS694,JiangY13C}). 
\textcolor{black}{More generally, \cite{10.1214/aop/1078415845}  established the central limit theorem (CLT) for the LSSs of a high-dimensional $ \bbB $ under Gaussian-like moments condition by employing random matrix theory (RMT). Here the term `Gaussian-like moments' refers to the population second-order and fourth-order moments are the same as those of real or complex Gaussian population.} We refer to this CLT as the Bai--Silverstein theorem (BST) for brevity. Following the work of \cite{10.1214/aop/1078415845}, many extensions have been developed under different settings.  \textcolor{black}{\cite{pan2008central} generalized the BST by relaxing the Gaussian-like moments condition of $ x_{ij} $, which at the price of adding a structural condition on $ \bbT $.  } \cite{zheng2012central}, \cite{YangP15I} and \cite{BaoH22S}  extended the BST to multivariate $F$ matrices, canonical correlation matrices and block correlation matrices, respectively. \cite{pan2014comparison} presented the CLT for the LSS of noncentered sample covariance matrices, and \cite{ 10.1214/14-AOS1292} studied the case of an unbiased sample covariance matrix when the population mean is unknown. \cite{chen2015clt} focused on the ultrahigh dimensional case in which the dimension $p$ is much larger than the sample size $n$. \cite{GaoH17H} and \cite{LiW21C} studied the CLTs for the LSSs of high-dimensional Spearman and Kendall's rank correlation  matrices, respectively. Without attempting to be comprehensive, we also refer readers to other extensions \citep{BaiM07A, BaiH15C, BaiL19C, ZhengC19T, BannaN20C, najim2016gaussian, baik2018ferromagnetic, hu2019high, JiangB21G}.

\textcolor{black}{Almost all the literature mentioned above have traditionally assumed that the spectral norms of $ \bSi $ are bounded in $ n $.}
This assumption limits the applications in data analysis because in many fields, such as economics and wireless communication networks, the leading eigenvalues may tend to infinity. We present two examples here.
\begin{itemize}
	 
\item \textit{\textbf{Signal detection}} (\cite{10.1093/biomet/asw060}): We consider a single signal model:
	\begin{equation*} 
		{\bm x}=\chi_{s}^{1/2}u{\bm h} +\sigma\bm{v},   
	\end{equation*} 
	where $ {\bm h} $ is an unknown $ p $-dimensional unit vector, $ u $ is a random variable distributed as $ N(0,1) $, $  \chi_{s}$ is the signal strength, $\sigma  $ is the noise level, and $\bm {v} $ is
 a random noise vector that is independent of $ u $ and follows a multivariate Gaussian distribution $N_p (0, \bm\Sigma_{v})$.
 It is easy to check that the covariance matrix of $\bm x$ is $\bm\Sigma_x=\sigma^2\bm\Sigma_{v}+\chi_s{\bm h}{\bm h}^{\top}$. When the noise level is low, 
but the signal strength is large and sometimes tends to infinity, it is illogical to assume the boundedness of $\bm\Sigma_x$.

\textcolor{black}{\item \textit{\textbf{Factor model }}(\cite{bai02}): Many economic analyses, such as arbitrage pricing theory and analyses of the rank of a demand system, align naturally within the framework of the factor model:
\begin{align*} \mathop{\bm x_{t}}\limits_{\left( N\times1\right) }=\mathop{\bgL}\limits_{\left( N\times r\right) }\mathop{\bm f_{t}}\limits_{\left( r\times 1\right) }+\mathop{\bm\ep_{t}}\limits_{\left( N\times 1\right) } ~~t=1,\dots,T.
\end{align*}
where  $ \bm x_{t} $ is the observed data, $ N $ represents the number of cross-sections,  $ T $ is a large time dimension, and $ \bm f_{t}, \bgL $ and  $  \bm\ep_{t} $ represent the common factors, the factor loadings and the idiosyncratic error term, respectively. To ensure the identification of the model, several conventional assumptions are needed, such as $ \mathbb{E}\bm f_{t}=\bf 0$, $ \mathbb{E}\left(\bm f_{t}\bm f_{t}^{\top} \right) =\bbI_{r} $, $  \bm\ep_{t} $ is independent of $ \bm f_{t}$ with $ \mathbb{E}\bm\ep_{t}=\bf 0$ and $ \mathbb{E}\bm\ep_{t}\bm\ep_{t}^{\top}=\bSi_{\ep}>0$. 
Then the covariance matrices of $ \bm x_{t} $ can be expressed as $ \bSi_{x}=\bgL\bgL^{\top}+\bSi_{\ep}$. A pervasive assumption is that the variances of the principal components $ \bgL\bm f_{t} $ can diverge as $N$ increases to infinity (see Assumption B of \cite{bai02}). Therefore, the spectral norms of $\bSi_{x}$ are unbounded.
} 
\end{itemize}

For these reasons, it is of practical value to obtain the asymptotic properties of the LSS when $ \bSi $ is unbounded. Therefore, in this paper,
we focus on a generalized CLT for the LSSs of a spiked covariance matrix with the following structure:
\begin{align}\label{ds}
  \bSi=\mathbf{V}\left(\begin{array}{cc}
	\bbD_{1} & 0 \\
	0 & \bbD_{2}
\end{array}\right) \mathbf{V}^{\ast},
\end{align}
where $\mathbf{V}$ is a unitary matrix, $\bbD_{1}$ is a diagonal matrix consisting of the descending unbounded eigenvalues, and $\bbD_{2}$ is the diagonal matrix of the bounded eigenvalues. 
\textcolor{black}{As an application, the established CLT is employed to study the asymptotic behaviors of two special LSSs, i.e., the likelihood ratio (LR) statistic and Nagao's trace (NT) statistic,  under the hypothesis 
\begin{align} \label{hytest1}
		H_{0}:\bSi=\bbI_{p}\quad \text{vs.} \quad
		H_{1}:\bSi=\mathbf{V}\left(\begin{array}{cc}
			\bbD_{1} & 0 \\
			0 & \bbI_{p-M}
		\end{array}\right) \mathbf{V}^{\ast},
\end{align} 
where $M$ is a constant.
We also derive the asymptotic power of Roy's largest root test to detect the above hypothesis and make a comparison with these two LSSs. }

 The setting \eqref{ds} is attributed to the famous spiked model
in which a few large eigenvalues of the population covariance matrix are assumed to be well separated from the remaining eigenvalues \citep{10.1214/aos/1009210544}. The spiked model has served as the foundation for a rich theory of principal component analysis through the performance of extreme eigenvalues, as discussed in \cite{BAIK20061382, 10.2307/24307692, 10.1214/07-AIHP118, Nadler08F,JungM09P,  BAI2012167, OnatskiM14S, BloemendalK16P, WangY17E, DonohoG18O, PerryW18O, JohnstoneP18P, YangJ18E, YaoZ18E, Dobriban20P, JohnstoneO20T, cai2020limiting, JiangB21G}. 
There are also several works that have considered the asymptotic behaviors of various quantities as the spike strengths tend to infinity. Specifically, \cite{Zhou15} focused on the consistency of the sample eigenvector, corresponding to the largest eigenvalue of the sample covariance matrix, under high dimension and low sample size settings, when $ \bSi $ is unbounded and the data set is Gaussian. \textcolor{black}{\cite{Wang17} derived the asymptotic distributions of the spiked eigenvalues and eigenvectors when $ \bSi $ is diagonal and unbounded, and the data set is sub-Gaussian.
}  
Recently, \cite{li2020asymptotic}, \cite{yin2021spectral} and \cite{zhang2022asymptotic} investigated the trace of a large sample covariance matrix under the spiked model assumption.


To summarize, the contributions of this paper are as follows.
\textcolor{black}{
\begin{enumerate}
\item We demonstrate a nontrivial extension of the BST to the situation in which the spectral norms of the population covariance matrices are allowed to diverge as $\min\{p,n\}\to\infty$. In particular, we show how the test function $ f $ and the divergence rate of the population spectral norm affect the new CLT. 
\item It was previously reported that Gaussian-like moments or diagonality of the population covariance matrix are necessary for the CLT of the LSS (e.g., \cite{10.1214/14-AOS1292}). Nevertheless, we prove that these restrictions can be completely removed by normalizing the LSS. More importantly, even if no limit exists on the variance of the LSS, the new CLT could still hold.
\item The entire technical part of this paper is built on the decomposition of the LSS $ \sum_{j=1}^{p}f\left(\lambda_{j} \right)  = \sum_{j=1}^{M}f\left(\lambda_{j} \right)  + \sum_{j=M+1}^{p}f\left(\lambda_{j} \right)  $.  Because the classical delta method cannot be applied to the unbounded part $\sum_{j=1}^{M}f\left(\lambda_{j} \right) $ and the bounded part $ \sum_{j=M+1}^{p}f\left(\lambda_{j} \right)  $  is not a strict LSS of a sample covariance matrix, the results of \cite{10.1214/aop/1078415845} and \cite{JiangB21G} cannot be adopted directly. In this paper, 
we leverage a `generalized delta method' and employ skillful transformations to prove the CLTs for the unbounded and bounded parts, respectively. Moreover, we prove that the unbounded and bounded parts are asymptotically independent, which leads to the establishment of the new CLT. 
\item 
We verify that  Roy's largest root test is most powerful among the common tests when the alternative \eqref{hytest1} has only one spiked eigenvalue, which has also been mentioned by \cite{Olson74,10.1093/biomet/asw060}. Furthermore, 
we demonstrate that when the number of spikes is larger than one, the LSSs could exhibit higher asymptotic power than Roy's largest root test when the divergence rates of spiked eigenvalues are higher than $ \sqrt{n} $. 
\end{enumerate}}

The remaining sections are organized as follows: Section \ref{section 2} presents a detailed description of our notations and assumptions. The main results for the CLT for the LSSs of sample covariance matrices are stated in Section \ref{section 3}. In Section \ref{section 5}, we explore an application of our main results. 
We also report the results of numerical studies in Section \ref{simulation}.  
Technical proofs are presented in Section \ref{section 6}.
\textcolor{black}{This paper is also accompanied by an online supplementary file that includes the following materials: (i) some postponed proofs for Theorems \ref{thm1}--\ref{RLRT power}; (ii) some additional simulation results, and (iii) some useful lemmas. }

\section{Notations and assumptions}\label{section 2}

 Throughout the paper, we use bold capital letters and bold italic lowercase letters to represent matrices and vectors, respectively. Scalars are represented by regular letters.  $\boldsymbol{e}_{i}$ denotes a standard basis vector whose components are all zero, except the $i$-th component,
which is equal to 1. We use tr$ (\bbA) $, $ \bbA^{\top} $ and $ \bbA^{\ast} $ to denote the trace, transpose and conjugate transpose of matrix $ \bbA $, respectively. We also use $f'$ to denote the derivative of function $f$, and we use $ \frac{\partial}{\partial z_{1}}f(z_{1},z_{2}) $ to denote the partial derivative of function $ f $ with respect to $ z_{1} $. 
Let $ \left[\bbA \right]_{ij}  $ denote the $ (i,j) $-th entry of the matrix $ \bbA $ and $ \oint_{\mathcal{C}}f(z)dz $ denote the contour integral of $ f(z) $ on the contour $ \mathcal{C} $. Let $ \lambda_{i}^{\bbA} $  be the $ i $th largest eigenvalue of matrix $ \bbA $. 
Weak convergence is denoted by $ \stackrel{d}{\rightarrow}$.  Throughout this paper,  we use $o(1)$ (resp. $o_p (1)$) to denote a negligible scalar (resp. in probability), and the notation $C$ represents a generic constant that may vary from line to line.

\textcolor{black}{ We adopt the notation $ \bbX=(\boldsymbol x_{1},\ldots,\boldsymbol x_{n})=(x_{ij}) $, $ 1\leq i\leq p $, $ 1\leq j\leq n $. Let  $\rho_{ 1}\geq\cdots \geq \rho_{ p}$ be the eigenvalues of $ \bSi $ and  
 the singular value decomposition of $\bbT$ be
\begin{equation}\label{decT}
	\bbT=\bbV\bbD^{1/2}\bbU^*=
	(\bbV_1,\bbV_2)
	\left( 
	\begin{array}{cc}
		\bbD_{1}^\frac{1}{2} & 0\\
		0 & \bbD_{2}^\frac{1}{2}
	\end{array}
	\right)
	(\bbU_1,\bbU_2)^{\ast}. 
\end{equation} 
Here  $\bbU$ and $\bbV$ are unitary matrices, and  $\bbD_{1}=\rdiag(\underbrace{\alpha_1,\dots,\alpha_1}_{d_{1}},\underbrace{\alpha_2,\dots,\alpha_2}_{d_{2}},\dots,\underbrace{\alpha_K,\dots,\alpha_K}_{d_{K}})$ is a diagonal matrix whose diagonal elements tend to infinity. To avoid confusion, we refer to $\{\alpha_i,i=1,\dots,K\}$ as the diverging spikes in the following. } Assume $ d_{1}+\cdots+d_{K}=M$.  $\bbD_{2}$ is the diagonal matrix of the eigenvalues with bounded components, including bounded spiked eigenvalues and bulk eigenvalues. 
Moreover, let $d_0=0$ and  $ J_{k}=\left\lbrace \sum_{i=0}^{k-1}d_i+1, \ldots, \sum_{i=0}^{k}d_i\right\rbrace $, thus $  \rho_{i} = \al_{k} $ if $ i \in J_{k} $.   Then, 
 the corresponding sample covariance matrix $\bbB=\frac{1}{n}\bbT\bbX\bbX^{\ast}\bbT^{\ast}$
is the so-called generalized spiked sample covariance matrix. 
{Corresponding to the decomposition of $\bbD$, we decompose $ \bbV=\left(\bbV_{1},\bbV_{2} \right)$,  $ \bbU=\left(\bbU_{1},\bbU_{2} \right)$, and denote $\bGma=\bbV_{2}\bbD_{2}^{1/2}\bbU_{2}^{\ast} $, $ \boldsymbol r_{j}=\frac{1}{\sqrt{n}}\bGma\boldsymbol x_{j} $, and $ \bbA_{j}=\frac{1}{n}\bGma\bbX\bbX^{*}\bGma^{*}-z\bbI-\boldsymbol r_{j}\boldsymbol r_{j}^{*}. $ Let $ \mathbb{E}_{j} $ be the conditional expectation with respect to the $ \sigma $-field generated by $ \boldsymbol r_{1}, \dots, \boldsymbol r_{j} $. 
%
For any matrix $\bbA $ with real eigenvalues, the empirical spectral distribution of $\bbA $ is denoted by
\begin{equation*}
	F^{\bbA}\left(x \right)=\frac{1}{p}\left(\text{number of eigenvalues of }  \bbA \leq x \right).  	
\end{equation*}	
For any function of bounded variation $F$ on the real line, its Stieltjes transform is defined by
$$
m_{F}(z)=\int \frac{1}{\lambda-z} \mathrm{~d} F(\lambda), \quad z \in \mathbb{C}^{+} :=\{z \in \mathbb{C}: \Im z>0\}.
$$

The assumptions used to obtain the results in this paper are as follows:
\newtheorem{assumption}{Assumption}[]
\textcolor{black}{\begin{assumption} \label{ass1}
	$ \{x_{ij},  1\leq i\leq p ,  1\leq j\leq n \} $ {are i.i.d. random variables with common moments} $$ \mathbb{E}x_{ij}=0,\quad  \mathbb{E}\left| x_{ij}\right| ^{2}=1, \quad  \beta_{x}= \mathbb{E}\left|x_{ij}\right| ^{4}- \left|\mathbb{E}x_{ij}^{2}\right|^{2}-2,  \quad \alpha_{x}=\left|\mathbb{E}x_{ij}^{2}\right|^{2}.  $$ 
\end{assumption} }
\begin{assumption} \label{ass2}
	{ As $\min\{p,n\}\to\infty$, the ratio of the dimension-to-sample size (RDS)} $ c_{n}:={p}/{n}\rightarrow c>0. $
\end{assumption} 
\begin{remark}
	Assumptions \ref{ass1} and \ref{ass2} are standard in RMT. If $\mathbb{E}x_{ij}\neq 0$, we can use the centralized sample covariance matrices and $n-1
	$ instead of $\bbB_n$ and $n$, respectively, and the following results also hold. Details can be found in \citep{10.1214/14-AOS1292}. Therefore, in the sequel, we assume that $\mathbb{E}x_{ij}= 0$ without loss of generality. 
\end{remark}
\begin{assumption}\label{ass3}
	 {$ \bbT $ is nonrandom. As $\min\{p,n\}\to\infty$, $\alpha_K\to\infty$ and $H_n:=F^{\bGma\bGma^{*}}\stackrel{d}{\rightarrow}H$, where $H$ is a distribution function on the real line.  $M$ is fixed.}

\end{assumption} 

\begin{remark}
Similar to \cite{Silverstein95S} that under Assumptions 1-3, $F^{\bbB}\stackrel{d}{\rightarrow}F^{c,H}$ almost surely, where $F^{c,H}$ is a nonrandom distribution function whose Stieltjes transform $m:=m_{F^{c,H}}(z)  $ satisfies the following equation:
\begin{align}\label{Sequation}
  m=\int \frac{1}{t(1-c-c z m)-z} d H(t).
\end{align}
In the sequel, we call $F^{c,H}$ the limiting spectral distribution (LSD) of $\bbB$.
Moreover, because the matrix $\underline{\bbB}=\frac{1}{n}\bbX^*\bbT^{*}\bbT\bbX$ shares the same nonzero eigenvalues as $\bbB$, equation \eqref{Sequation} can be rewritten as
$$\underline{m}=-\left(z-c \int \frac{t }{1+t \underline{m}}d H(t)\right)^{-1},$$ 
where $\underline{m}:=m_{\underline{F}^{c,H}}(z)$ represents the Stieltjes transform of the LSD of $\underline{\bbB}$.
\end{remark} 
\begin{assumption}\label{ass4}
	Test functions $ f_{1},\dots, f_{h} $ are analytic on a connected open region of the complex plane containing the support of $ F^{c_{n},H_{n}}$ for almost all $ n. $ Moreover, we suppose that for any $l=1,\dots,h$,  $$\lim_{\{x_{n},y_n\}\to\infty\atop {x_{n}}/{y_{n}}\rightarrow 1}\frac{f_{l}'\left(x_{n} \right) }{f_{l}'\left(y_{n} \right)}= 1 .$$
\end{assumption}    
\begin{remark}
	In fact, Assumption \ref{ass4} is not highly restrictive in practice, as many common functions such as logarithmic and polynomial functions satisfy it. \textcolor{black}{However, it is worth noting that the exponential function does not satisfy this assumption.}
\end{remark} 

\textcolor{black}{ For convenience of description, we introduce some notations before presenting the main results in the next section. Let $ \underline{F}^{c,H} $ denote the LSD of matrix $ n^{-1}\bbX^{\ast}\bbU_{2}\bbD_{2}\bbU_{2}^{*}\bbX$,  
$ \bbU_{1}=\left(u_{ij} \right)_{i=1,\dots,p;j=1,\dots,M}  $,  ~$\mathcal{U}_{i_{1}j_{1}i_{2}j_{2}}=\sum_{t=1}^{p}\overline{u}_{ti_{1}}u_{tj_{1}}u_{ti_{2}}\overline{u}_{tj_{2}}$, $\phi_n\left(x \right)=x\left(1+c_n\int\frac{t}{x-t}dH_n\left(t \right)  \right)$,$$ \phi_{k}=\phi\left(x \right)\mid_{x=\al_{k}}=\al_{k}\left(1+c\int\frac{t}{\al_{k}-t}dH\left(t \right)  \right),~~
\theta_{k}=\phi_{k}^{2}\underline{m}_{2}\left( \phi_{k}\right), ~~\nu_{k}=\phi_{k}^{2} \underline{m}^{2}\left(\phi_{k}\right),$$$$  \underline{m}\left( \lambda\right)=\int\frac{1}{x- \lambda }d\underline{F}^{c,H}\left( x\right),~~\underline{m}_{2}\left( \lambda\right)=\int\frac{1}{\left( \lambda-x\right) ^{2}}d\underline{F}^{c,H}\left( x\right), $$ 
$$  c_{nM}=\dfrac{p-M}{n},~~H_{2n}=F^{\bbD_{2}},  ~~ \bbP_{n}(z)=\left( (1-c_{nM})\bGma\bGma^{*}-zc_{nM}m_{2n0}(z)\bGma\bGma^{*}-z\bbI_{p}\right)^{-1},  $$
$$ \varpi_{nkl}=\frac{\phi_{n}\left(\al_{k} \right)}{\sqrt{n}} f_{l}'\left(\phi_{n}\left(\al_{k} \right)  \right), ~~
	s_{k}^{2}= \frac{\left(\al_{x}+1 \right)d_{k} }{\theta_{k}}+\frac{ \beta_{x}\nu_{k} \sum_{j_{1}, j_{2}\in J_{k}}\mathcal{U}_{j_{1}j_{1}j_{2}j_{2}} }{\theta_{k}^{2}}, $$
	$$\vartheta_{n}^{2}=\Theta_{0,n}(z_{1},z_{2})+\al_{x}\Theta_{1,n}(z_{1},z_{2})+\beta_{x}\Theta_{2,n}(z_{1},z_{2}),$$
		$$\Theta_{0,n}(z_{1},z_{2})=\dfrac{\underline{m}_{2n0}^{\prime}(z_{1}) \underline{m}_{2n0}^{\prime}(z_{2})  }{(\underline{m}_{2n0}(z_{1})-\underline{m}_{2n0}(z_{2}) )^{2} }-\dfrac{1}{(z_{1}-z_{2})^{2}},$$
		$$\Theta_{1,n}(z_{1},z_{2})=\frac{\partial}{\partial z_{2}}\left\lbrace \dfrac{\partial \mathcal{A}_{n}(z_{1},z_{2})}{\partial z_{1}}\dfrac{1}{1-\al_{x}\mathcal{A}_{n}(z_{1},z_{2})} \right\rbrace ,$$
		$${\mathcal{A}_{n}(z_{1},z_{2})}=\dfrac{z_{1}z_{2}}{n}\underline{m}_{2n0}(z_{1}) \underline{m}_{2n0}(z_{2})\mathrm{tr}{\bGma^{*}\bbP_{n}(z_{1})\bGma\bGma^{\top}\bbP_{n}(z_{2})^{\top} \bar{\bGma}},$$
		$$\Theta_{2,n}(z_{1},z_{2})=\dfrac{z_{1}^{2}z_{2}^{2}\underline{m}_{2n0}^{\prime}(z_{1}) \underline{m}_{2n0}^{\prime}(z_{2})}{n}\sum_{i=1}^{p}\left[ \bGma^{*}\bbP_{n}^{2}(z_{1})\bGma\right]  _{ii}\left[ \bGma^{*}\bbP_{n}^{2}(z_{2})\bGma\right]  _{ii},$$
		 \begin{align*}
    	\mu_{l}=&\nonumber-\frac{\alpha_{x}}{2 \pi i}\cdot\oint_{\mathcal{C}}\frac{  c_{nM} f_{l}(z)\int \underline{m}_{2n0}^{3}(z)t^{2}\left(1+t \underline{m}_{2n0}(z)\right)^{-3} d H_{2n}(t)}{\left(1-c_{nM} \int \frac{\underline{m}_{2n0}^{2}(z) t^{2}}{\left(1+t \underline{m}_{2n0}(z)\right)^{2}} d H_{2n}(t)\right)\left(1-\alpha_{x} c_{nM} \int \frac{\underline{m}_{2n0}^{2}(z) t^{2}}{\left(1+t \underline{m}_{2n0}(z)\right)^{2}} d H_{2n}(t)\right) }dz \\
    	&-\frac{\beta_{x}}{2 \pi i} \cdot \oint_{\mathcal{C}} \frac{c_{nM}  f_{l}(z)\int \underline{m}_{2n0}^{3}(z) t^{2}\left(1+t \underline{m}_{2n0}(z)\right)^{-3} d H_{2n}(t)}{1-c_{nM} \int \underline{m}^{2}_{2n0}(z) t^{2}\left(1+t \underline{m}_{2n0}(z)\right)^{-2} d H_{2n}(t)} dz,\quad l=1,\dots,h.	
    \end{align*}
	Here, $ m_{2n0}(z) $ is the Stieltjes transform of $ F^{c_{nM},H_{2n}} $, where $F^{c_{nM},H_{2n}}$ is the LSD $F^{c,H}$ with $\{c,~H\}$ replaced by $\{c_{nM},~ H_{2n}\}$, $ \underline{m}_{2n0}(z)=-\frac{1-c_{nM}}{z}+c_{nM}m_{2n0}(z) $ and $\mathcal{C}$ is a closed contour in the complex plane enclosing the support of  $ F^{c_n, H_n}$ and it is also enclosed in the analytic area of $ f_l $.} For clarity, $ m_{1n0}(z) $ denotes the Stieltjes transform of $ F^{c_{n},H_{n}} $, $m_{n}=\frac{1}{p}\mathrm{tr}\left(\bbB-z\bbI_{p} \right)^{-1} $, and $m_{2n}=\frac{1}{p-M}\mathrm{tr}\left(\bbS_{22}-z\bbI_{p-M} \right)^{-1}$. 

Note that $$\sum_{j=1}^{p}f\left( \bgl_{j}\right)=p\int f\left(x \right)dF^{\bbB}(x). $$ 
Thus, for brevity, we define the normalized LSSs as
$$  Y_{l}= \int f_{l}\left(x \right)dG_{n}\left( x\right)-{\sum_{k=1}^{K}d_{k}f_{l}\left(\phi_{n}\left(\al_{k} \right)  \right)}-\frac{M}{2\pi i}\oint_{\mathcal C}f_{l}\left(z \right)\frac{\underline{m}_{2n0}'(z)}{\underline{m}_{2n0}(z)}dz,\quad l=1,2,\dots,h, $$
where $$ G_{n}\left( x\right)=p[F^{\bbB}\left(x \right)-F^{c_{n},H_{n}}\left(x \right)] .$$ 

\section{Main results}  \label{section 3}
Now, we are in a position to present our main theorems and their proofs are provided in Section \ref{section 6} and the supplementary material. We first establish a CLT for an LSS
without any restrictions imposed on the Gaussian moments 
or on the structures of the population covariance matrix  by normalizing the LSS.
 

\begin{thm}\label{thm1}
Under Assumptions
	 \ref{ass1}--\ref{ass4}, we have $$ \frac{Y_{1}-\mu_{1}}{\varsigma_{1}}\stackrel{d}{\rightarrow}N\left(0, 1\right),    $$  where
\textcolor{black}{\begin{align}
		\varsigma_{1}^{2}=\sum_{k=1}^{K}\varpi_{nk1}^{2}s_{k}^{2} -\frac{1}{4\pi^{2}}\oint_{\mathcal{C}_{1}}\oint_{\mathcal{C}_{2}}f_{1}\left(z_{1} \right)f_{1}\left(z_{2} \right)\vartheta_{n}^{2}dz_{1}dz_{2},\label{thm3.1cov}
	\end{align}
	$\mathcal{C}_{1}$ and $\mathcal{C}_{2}$ are nonoverlapping and closed contours in the complex plane enclosing the support of  $ F^{c_{n}, H_{n}}$.  $\mathcal{C}_{1}$ and $\mathcal{C}_{2}$ are also enclosed in the analytic area of $ f_1. $ }
\end{thm}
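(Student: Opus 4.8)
The plan is to split the LSS into the part carried by the $M$ largest sample eigenvalues (the \emph{spike part}) and the part carried by the remaining $p-M$ eigenvalues (the \emph{bulk part}), to establish a CLT for each separately, to show that the two parts are asymptotically independent, and then to add them. A direct attack through the classical representation $\int f_1\,dG_n=-\tfrac1{2\pi i}\oint f_1(z)\,p\,(m_n(z)-m_{1n0}(z))\,dz$ is unavailable, because the contour would have to enclose the diverging outliers $\phi_n(\al_k)\to\infty$ and so cannot be kept bounded, which destroys the resolvent estimates underlying the Bai--Silverstein theorem. Instead, since $F^{c_n,H_n}$ has bounded support while $F^{\bbB}$ places $M$ atoms near the diverging locations $\phi_n(\al_k)$, we write $\int f_1\,dG_n=\sum_{j=1}^{M}f_1(\lambda_j)+\big(\sum_{j=M+1}^{p}f_1(\lambda_j)-p\int f_1\,dF^{c_n,H_n}\big)$ and distribute the two centering terms of $Y_1$ to obtain the exact decomposition $Y_1=Y_1^{\mathrm{sp}}+Y_1^{\mathrm{bulk}}$, where
\begin{equation*}
Y_1^{\mathrm{sp}}=\sum_{k=1}^{K}\sum_{j\in J_k}\big(f_1(\lambda_j)-f_1(\phi_n(\al_k))\big),\qquad
Y_1^{\mathrm{bulk}}=\sum_{j=M+1}^{p}f_1(\lambda_j)-p\int f_1\,dF^{c_n,H_n}-\frac{M}{2\pi i}\oint_{\mathcal C}f_1(z)\,\frac{\underline m_{2n0}'(z)}{\underline m_{2n0}(z)}\,dz .
\end{equation*}

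\emph{CLT for the spike part.} Here the classical delta method fails because $\phi_n(\al_k)\to\infty$. For $j\in J_k$, a mean-value expansion gives $f_1(\lambda_j)-f_1(\phi_n(\al_k))=f_1'(\xi_{j,n})(\lambda_j-\phi_n(\al_k))$ with $\xi_{j,n}$ between $\lambda_j$ and $\phi_n(\al_k)$; since $\lambda_j/\phi_n(\al_k)\to1$ a.s.\ (first-order behaviour of spiked eigenvalues in a generalized spiked model) and $|\lambda_j-\phi_n(\al_k)|=O_p(\phi_n(\al_k)/\sqrt n)$, Assumption \ref{ass4} forces $f_1'(\xi_{j,n})/f_1'(\phi_n(\al_k))\to1$, so the error of replacing $f_1'(\xi_{j,n})$ by $f_1'(\phi_n(\al_k))$ is $o_p(\varsigma_1)$ and
\begin{equation*}
Y_1^{\mathrm{sp}}=\sum_{k=1}^{K}\varpi_{nk1}\,W_{nk}+o_p(\varsigma_1),\qquad W_{nk}:=\frac{\sqrt n}{\phi_n(\al_k)}\sum_{j\in J_k}\big(\lambda_j-\phi_n(\al_k)\big).
\end{equation*}
It then remains to prove that $(W_{nk})_{k=1}^{K}$ converges jointly to independent centered normals with $\mathrm{Var}(W_{nk})\to s_k^2$. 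This is a generalized spiked-eigenvalue CLT in the spirit of \cite{JiangB21G}: via a Schur complement with respect to the signal directions $\bbU_1$ one gets the determinantal equation for the outliers, isolates the $d_k\times d_k$ block living at scale $\phi_n(\al_k)$, and expands the quadratic forms in the signal columns $\bbU_1^{*}\boldsymbol x_j$; their second-moment fluctuations produce the term $(\al_x+1)d_k/\theta_k$ and their fourth-moment fluctuations the eigenvector-dependent term $\beta_x\nu_k\sum_{j_1,j_2\in J_k}\mathcal U_{j_1j_1j_2j_2}/\theta_k^{2}$ in $s_k^2$, while centering at $\phi_n(\al_k)$ rather than at its limit $\phi_k$ is what cancels the leading bias. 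Distinct spikes act through asymptotically orthogonal eigen-directions, giving independence across $k$ and the contribution $\sum_k\varpi_{nk1}^2s_k^2$ to $\varsigma_1^2$.

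\emph{CLT for the bulk part.} The sum $\sum_{j>M}f_1(\lambda_j)$ is not itself an LSS of a sample covariance matrix, so it must be transformed. From $\bbT\bbX=\bbV_1\bbD_1^{1/2}\bbU_1^{*}\bbX+\bGma\bbX$ one gets $\bbB=\bbS_{22}+\Delta_n$ with $\mathrm{rank}(\Delta_n)\le 2M$, where $\bbS_{22}$ is the genuine sample covariance matrix built from $\bGma$ (population spectrum $\bbD_2$, which is bounded). Eigenvalue interlacing and resolvent identities give $\sum_{j>M}f_1(\lambda_j)=\mathrm{tr}\,f_1(\bbS_{22})+(\text{deterministic correction})+o_p(1)$, and that correction, together with the replacement of the centering $p\int f_1\,dF^{c_n,H_n}$ by $(p-M)\int f_1\,dF^{c_{nM},H_{2n}}$, equals $\tfrac{M}{2\pi i}\oint_{\mathcal C}f_1(z)\,\underline m_{2n0}'(z)/\underline m_{2n0}(z)\,dz$ up to $o(1)$ (verified from the defining Stieltjes equations, using $\underline m_{2n0}'/\underline m_{2n0}=(\log\underline m_{2n0})'$ and the relation $H_n=\tfrac{p-M}{p}H_{2n}+\tfrac Mp\delta_0$). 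Since $\bbS_{22}$ has bounded spectrum the relevant contour stays bounded, and the general fourth-moment CLT for LSSs of sample covariance matrices (the Bai--Silverstein theorem with the $\al_x,\beta_x$ corrections, for arbitrary bounded $H$, in the refined form of \cite{JiangB21G} if $\bbD_2$ carries bounded spikes) applies directly to $\bbS_{22}$, giving $\mathrm{tr}\,f_1(\bbS_{22})-(p-M)\int f_1\,dF^{c_{nM},H_{2n}}\stackrel{d}{\rightarrow}N\big(\mu_1,\,-\tfrac1{4\pi^2}\oint_{\mathcal C_1}\oint_{\mathcal C_2}f_1(z_1)f_1(z_2)\vartheta_n^{2}\,dz_1dz_2\big)$; this yields the mean $\mu_1$ and the contour-integral term of $\varsigma_1^2$.

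\emph{Asymptotic independence and conclusion.} Both $Y_1^{\mathrm{sp}}$ and $Y_1^{\mathrm{bulk}}$ admit martingale-difference decompositions with respect to the filtration generated by $\boldsymbol r_1,\dots,\boldsymbol r_n$ (equivalently by the columns $\boldsymbol x_1,\dots,\boldsymbol x_n$), the spike part being driven by quadratic forms in the signal block $\bbU_1^{*}\bbX$ and the bulk part by quadratic forms involving $\bGma\bbX$. One shows the limiting cross-covariance of these two families vanishes---the off-diagonal interactions between $\mathrm{range}(\bbU_1)$ and the bulk directions are of lower order---so the joint characteristic function factorizes in the limit; combined with the two marginal CLTs and joint tightness, $(Y_1^{\mathrm{sp}},\,Y_1^{\mathrm{bulk}}-\mu_1)$ converges to a bivariate normal with block-diagonal covariance. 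Hence $(Y_1-\mu_1)/\varsigma_1=\sum_{k}(\varpi_{nk1}/\varsigma_1)W_{nk}+(Y_1^{\mathrm{bulk}}-\mu_1)/\varsigma_1+o_p(1)$ is asymptotically normal with limiting variance $\big(\sum_k\varpi_{nk1}^2s_k^2-\tfrac1{4\pi^2}\oint_{\mathcal C_1}\oint_{\mathcal C_2}f_1f_1\vartheta_n^2\big)/\varsigma_1^2=1$, which is the assertion. The main obstacles are: (i) making the ``generalized delta method'' rigorous, i.e.\ transferring the joint CLT for the raw spiked eigenvalues to one for $\sum_j f_1(\lambda_j)$ uniformly across the diverging scale, which is exactly what Assumption \ref{ass4} is tailored for; and (ii) reducing the nonstandard bulk sum to an honest LSS of $\bbS_{22}$ with the correct deterministic shift and, above all, proving that the resulting bulk fluctuation is asymptotically independent of the spike fluctuation---delicate because $\bbB$ genuinely couples the signal and bulk subspaces through the common matrix $\bbX$.
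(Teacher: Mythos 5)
Your plan reproduces the paper's proof essentially step for step: the same exact decomposition into spike and bulk parts, the same ``generalized delta method'' using Assumption \ref{ass4} to reduce the spike part to $\sum_k\varpi_{nk1}\sum_{j\in J_k}\gamma_{kj}$ and then invoking the Jiang--Bai spiked-eigenvalue CLT, the same reduction of the bulk sum to the LSS of $\bbS_{22}$ with the correction term $\tfrac{M}{2\pi i}\oint f_1\,\underline m_{2n0}'/\underline m_{2n0}\,dz$ and the coincidence of the two centerings (the paper's Lemmas \ref{lemma1}--\ref{lemma3}), followed by asymptotic independence and summation. The only minor deviation is the independence step, where you propose a martingale cross-covariance computation while the paper instead conditions on $\bbU_2^{*}\bbX$ and uses the four-moment replacement of $\bbU_2^{*}\bbX$ by an independent copy to show the spike fluctuations depend on the bulk only through its deterministic limit; both routes are viable.
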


\begin{remark}\label{remark3.1}
\textcolor{black}{Recall the definitions $s_{k}^{2}= \frac{\left(\al_{x}+1 \right)d_{k} }{\theta_{k}}+\frac{ \beta_{x}\nu_{k} \sum_{j_{1}, j_{2}\in J_{k}}\mathcal{U}_{j_{1}j_{1}j_{2}j_{2}} }{\theta_{k}^{2}} $ and
	$\vartheta_{n}^{2}=\Theta_{0,n}(z_{1},z_{2})+\al_{x}\Theta_{1,n}(z_{1},z_{2})+\beta_{x}\Theta_{2,n}(z_{1},z_{2}).$ Notably, the term $ \Theta_{0,n}(z_{1},z_{2}) $  has a limitation under Assumptions
	 \ref{ass1}--\ref{ass4}, which has already been discussed in \cite{10.1214/aop/1078415845}.   Additionally, if $ \bSi $ is complex, the convergence of $ \Theta_{1,n}(z_{1},z_{2}) $ is not guaranteed.
The term $ \Theta_{2,n}(z_{1},z_{2}) $ involves the quantities $ \left[ \bGma^{*}\bbP_{n}^{2}(z_{i})\bGma\right]_{ii} $, which depend not only on the eigenvalues of $ \bbD_{2} $ but also on their associated eigenvectors.
Furthermore, the term $ s_{k}^{2}$ indicates that the variance is influenced by the second and fourth moments of $ x_{ij} $, spiked eigenvalues, and their associated eigenvectors. The limit of $ \varpi_{nk1}^{2} $ is allowed to be infinite. }
\end{remark}
\begin{remark}
\textcolor{black}{After a closer look at the variance  (\ref{thm3.1cov}), we can also find that the first part of the formula (\ref{thm3.1cov}) is the variance containing diverging spikes and the second part is the variance containing the bounded eigenvalues. When $ \phi_{n}\left(\al_{1} \right)f^{\prime}_{1}\left(\phi_{n}\left(\al_{1} \right) \right) =o\left(\sqrt{n} \right),   $ the first term in formula (\ref{thm3.1cov}) tends to 0, and the variance is mainly affected by the second part. When $ \phi_{n}\left(\al_{1} \right)f^{\prime}_{1}\left(\phi_{n}\left(\al_{1} \right) \right)  $ is of order $ \sqrt{n} $, the two parts of formula (\ref{thm3.1cov}) are of the same order, and the variance is affected by both. When the order of $ \phi_{n}\left(\al_{1} \right)f^{\prime}_{1}\left(\phi_{n}\left(\al_{1} \right) \right)  $ is higher than $ \sqrt{n} $,  the first part of formula (\ref{thm3.1cov}) is much larger than the second part; therefore, the spiked part dominates the variance value.}
\end{remark}

As a minor price for the removal of the bounded spectrum condition, the new CLT described above applies only to a single LSS. To guarantee that the new CLT will apply to multiple normalized LSSs,  structural
assumptions about the population covariance matrices are needed. 
\textcolor{black}{
\begin{assumption}\label{ass5}
	$ \bbT $  {is real or the variables} $x_{ij}$ {are complex satisfying} $\alpha_{x}=0 .$ 
\end{assumption}
\begin{assumption}\label{ass6}
	$ \bbT^{*}\bbT $  {is diagonal or} $ \beta_{x}=0. $
\end{assumption}}	
\begin{remark}\label{remark2.4}
Assumptions \ref{ass5} and \ref{ass6} are used as a replacement for the Gaussian-like moments condition.   It has been proved by \cite{10.1214/14-AOS1292} that these two structural assumptions regarding the population matrices are necessary for their results when the Gaussian-like moments condition in the BST does not hold. 
	
\end{remark}

The following theorem is a nontrivial extension of the BST:
\begin{thm}\label{thm2}
	\textit{Under Assumptions \ref{ass1}--\ref{ass6}}, the random vector  $$ \left( \frac{Y_{1}-\mu_{1}}{\sigma_{1}},\dots,\frac{Y_{h}-\mu_{h}}{\sigma_{h}}\right)^{\top}\stackrel{d}{\rightarrow}N_{h}\left(0, \bPsi \right),    $$
    with     variance
\textcolor{black}{	\begin{align*}
		\sigma_{l}^{2}&=\sum_{k=1}^{K}\varpi_{nkl}^{2} s_{k}^{2}-\kappa_{nll}, \quad l=1,\dots,h,
    \end{align*} 
     and covariance matrix $  \bPsi =\left( \psi_{st}\right) _{h\times h} $, where  $\psi_{st}=\lim_{n\to\infty}\psi_{nst}$,
     	\begin{align*}
		\psi_{nst}=\frac{\sum_{k=1}^{K}\varpi_{nks}\varpi_{nkt}s_{k}^{2}-\kappa_{nst}}{\sqrt{\sum_{k=1}^{K}\varpi_{nks}^{2}s_{k}^{2} -\kappa_{nss}}\sqrt{\sum_{k=1}^{K}\varpi_{nkt}^{2}s_{k}^{2} -\kappa_{ntt}}},
	\end{align*} 
	\begin{align*}
		\kappa_{nst}
		&=\frac{1}{4 \pi^{2}} \oint_{\mathcal{C}_{1}} \oint_{\mathcal{C}_{2}} \frac{f_{s}\left(z_{1}\right) f_{t}\left(z_{2}\right)}{\left(\underline{m}_{2n0} \left(z_{1}\right)-\underline{m}_{2n0}\left(z_{2}\right)\right)^{2}} d \underline{m}_{2n0}\left(z_{1}\right) d \underline{m}_{2n0}\left(z_{2}\right) \\
		&+\frac{c_{nM} \beta_{x}}{4 \pi^{2}} \oint_{\mathcal{C}_{1}} \oint_{\mathcal{C}_{2}} \int \frac{f_{s}\left(z_{1}\right) f_{t}\left(z_{2}\right)t^2}{\left(\underline{m}_{2n0}\left(z_{1}\right) t+1\right)^{2}\left(\underline{m}_{2n0}\left(z_{2}\right) t+1\right)^{2}} d H_{2n}(t) d \underline{m}_{2n0}\left(z_{1}\right) d \underline{m}_{2n0}\left(z_{2}\right)\\
		&\quad+\frac{1}{4 \pi^{2}} \oint_{\mathcal{C}_{1}} \oint_{\mathcal{C}_{2}} f_{s}\left(z_{1}\right) f_{t}\left(z_{2}\right)\left[\frac{\partial^{2}}{\partial z_{1} \partial z_{2}} \log \left(1-a_n\left(z_{1}, z_{2}\right)\right)\right] d z_{1} d z_{2},
		\end{align*} and
\begin{align*}
a_{n}\left(z_{1}, z_{2}\right)=\alpha_{x}\left(1+\frac{\underline{m}_{2n0}\left(z_{1}\right) \underline{m}_{2n0}\left(z_{2}\right)\left(z_{1}-z_{2}\right)}{\underline{m}_{2n0}\left(z_{2}\right)-\underline{m}_{2n0}\left(z_{1}\right)}\right).
\end{align*}
}
	
\end{thm}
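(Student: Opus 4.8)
The plan is to build on the decomposition $\sum_{j=1}^{p}f_l(\lambda_j)=\sum_{j=1}^{M}f_l(\lambda_j)+\sum_{j=M+1}^{p}f_l(\lambda_j)$ that already underlies the proof of Theorem \ref{thm1}, and to reduce Theorem \ref{thm2} to (a) the marginal statement of Theorem \ref{thm1} and (b) a joint-normality argument via the Cram\'er--Wold device. Concretely, I would write each normalized statistic as $Y_l=Y_l^{(1)}+Y_l^{(2)}+o_p(1)$, where $Y_l^{(1)}$ collects the (recentered) contributions of the $M$ diverging sample spikes $\lambda_1,\dots,\lambda_M$ and $Y_l^{(2)}$ is the (recentered) contribution of the bulk $\lambda_{M+1},\dots,\lambda_p$. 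Under the extra Assumptions \ref{ass5}--\ref{ass6} the terms $\Theta_{1,n},\Theta_{2,n}$ in the variance $\varsigma_l^2$ of Theorem \ref{thm1} collapse to explicit expressions: the $\alpha_x$-piece becomes the double contour integral of $\partial^2_{z_1z_2}\log(1-a_n)$ and the $\beta_x$-piece becomes the $H_{2n}$-integral appearing in $\kappa_{nll}$, so $\varsigma_l^2=\sigma_l^2=\sum_k\varpi_{nkl}^2s_k^2-\kappa_{nll}$ and the one-dimensional convergence $(Y_l-\mu_l)/\sigma_l\stackrel{d}{\rightarrow}N(0,1)$ is already contained in Theorem \ref{thm1}. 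It then suffices to prove joint convergence of any linear combination $\sum_l a_l(Y_l-\mu_l)$ and to identify its limiting variance as $\sum_{s,t}a_sa_t(\sum_k\varpi_{nks}\varpi_{nkt}s_k^2-\kappa_{st})$, which after dividing by $\sigma_l$ gives $\bPsi$.

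\textbf{The spiked part.} For the first block I would use the joint asymptotic normality of the normalized sample-spike fluctuations: for each group $J_k$ the quantity $\xi_{nk}$, essentially $\sqrt{n}$ times the sum over $j\in J_k$ of $\lambda_j/\phi_n(\alpha_k)-1$, extracted from the determinant equation characterizing the spiked eigenvalues together with a $\bbP_n(z)$-type resolvent expansion, is, jointly over $k=1,\dots,K$, asymptotically Gaussian, mutually independent, with the variance of $\xi_{nk}$ tending to $s_k^2$; the independence across $k$ and the eigenvector-dependent term $\mathcal{U}_{j_1j_1j_2j_2}$ enter exactly here. Since $\lambda_j/\phi_n(\alpha_k)\to1$ for $j\in J_k$, Assumption \ref{ass4} supplies the \emph{generalized delta method}, $f_l(\lambda_j)-f_l(\phi_n(\alpha_k))=f_l'(\phi_n(\alpha_k))(\lambda_j-\phi_n(\alpha_k))(1+o_p(1))$, whence $Y_l^{(1)}=\sum_{k=1}^{K}\varpi_{nkl}\,\xi_{nk}+o_p(1)$ because $\phi_n(\alpha_k)f_l'(\phi_n(\alpha_k))/\sqrt{n}=\varpi_{nkl}$. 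Consequently, for any $a_1,\dots,a_h$, $\sum_l a_l Y_l^{(1)}=\sum_k\big(\sum_l a_l\varpi_{nkl}\big)\xi_{nk}+o_p(1)\stackrel{d}{\rightarrow}N\big(0,\sum_k(\sum_l a_l\varpi_{nkl})^2s_k^2\big)$.

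\textbf{The bulk part.} The partial sum $\sum_{j=M+1}^{p}f_l(\lambda_j)$ is not an LSS of a sample covariance matrix, so I would first transform it into one. Using a rank-$M$ (Sherman--Morrison type) update identity relating the resolvent of $\underline{\bbB}=n^{-1}\bbX^{*}\bbT^{*}\bbT\bbX$ to that of the reduced matrix $\bbS_{22}=n^{-1}\bbD_2^{1/2}\bbU_2^{*}\bbX\bbX^{*}\bbU_2\bbD_2^{1/2}$, together with a contour integral $-\frac{1}{2\pi i}\oint_{\mathcal C}f_l(z)(\cdots)\,dz$ around the support of $F^{c_n,H_n}$ (which excludes the diverging spikes located near $\phi_n(\alpha_k)\to\infty$), I would show $\sum_{j=M+1}^{p}f_l(\lambda_j)=\sum_{i=1}^{p-M}f_l(\lambda_i^{\bbS_{22}})+(\text{deterministic})+o_p(1)$, the deterministic shift being precisely the $\frac{M}{2\pi i}\oint f_l(z)\underline m_{2n0}'(z)/\underline m_{2n0}(z)\,dz$ correction in the definition of $Y_l$. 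Then I would invoke the Bai--Silverstein/Pan CLT for LSSs of the bounded-spectrum matrix $\bbS_{22}$ (RDS $c_{nM}$, population ESD $H_{2n}$), which under the structural Assumptions \ref{ass5}--\ref{ass6} in place of the Gaussian-moment condition (cf. Remark \ref{remark2.4}) is valid and delivers mean $\mu_l$ and, jointly over $l$, covariance $-\kappa_{nst}$, whose three pieces are the classical Bai--Silverstein kernel, the $\beta_x$ fourth-moment correction, and the $\alpha_x$ correction through $\log(1-a_n)$. Hence $\sum_l a_l(Y_l^{(2)}-\mu_l)\stackrel{d}{\rightarrow}N\big(0,-\sum_{s,t}a_sa_t\kappa_{st}\big)$.

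\textbf{Asymptotic independence and conclusion; the main obstacle.} The decisive step is that $(Y_1^{(1)},\dots,Y_h^{(1)})$ and $(Y_1^{(2)}-\mu_1,\dots,Y_h^{(2)}-\mu_h)$ are asymptotically independent. I would establish this through a joint martingale decomposition along the sample columns $\boldsymbol x_1,\dots,\boldsymbol x_n$ with respect to the filtration generated by $\mathbb{E}_j$: the martingale differences feeding $\sum_l a_l Y_l^{(1)}$, which to leading order depend on the spike directions $\bbU_1^{*}\boldsymbol x_j$, and those feeding $\sum_l b_l(Y_l^{(2)}-\mu_l)$, which depend on the bulk directions, have cross-quadratic-variation tending to zero in probability; Assumptions \ref{ass5}--\ref{ass6} are exactly what is needed to annihilate the surviving fourth-moment cross-terms between the spike and bulk directions, since without them the two blocks of $\bbU^{*}\bbX$ are not even asymptotically uncorrelated. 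A bivariate martingale CLT then yields joint normality of $\big(\sum_l a_l Y_l^{(1)},\ \sum_l a_l(Y_l^{(2)}-\mu_l)\big)$ with diagonal limiting covariance, so by Slutsky $\sum_l a_l(Y_l-\mu_l)\stackrel{d}{\rightarrow}N\big(0,\sum_{s,t}a_sa_t(\sum_k\varpi_{nks}\varpi_{nkt}s_k^2-\kappa_{st})\big)$ for every $(a_l)$; the Cram\'er--Wold theorem, after dividing by $\sigma_l$, gives the stated $N_h(0,\bPsi)$. I expect the two hardest points to be (i) this asymptotic independence of the two blocks without a Gaussian-moment assumption, which is where the structural hypotheses must be exploited with care, and (ii) the transformation of the bulk partial sum into a bona fide LSS of $\bbS_{22}$ with exact tracking of the deterministic centering $\mu_l$ and of the $\underline m_{2n0}'/\underline m_{2n0}$ term.
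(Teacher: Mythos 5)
Your proposal follows essentially the same route as the paper: the same decomposition into the spiked part and the bulk part, the generalized delta method reducing $Y_l^{(1)}$ to $\sum_k\varpi_{nkl}\xi_{nk}$ with $\xi_{nk}$ asymptotically Gaussian of variance $s_k^2$ (the paper's Lemma \ref{lemma4}), the rank-$M$ resolvent identity converting $\sum_{j=M+1}^{p}f_l(\lambda_j)$ into the LSS of $\bbS_{22}$ plus the $\underline m_{2n0}'/\underline m_{2n0}$ correction (Lemma \ref{lemma2}), and the substitution-principle CLT of Zheng--Bai--Yao for the bulk with covariance $-\kappa_{nst}$, assembled by Cram\'er--Wold. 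The one genuine divergence is the independence step: you propose a joint martingale decomposition along the columns with vanishing cross-quadratic variation, whereas the paper (Lemma \ref{lemma5}) argues conditionally, using the fact from \cite{JiangB21G} that $\bbU_2^{*}\bbX$ may be replaced by an independent copy in $\bgO_M(\phi_k)$ without changing its limit, so that given the bulk the spike fluctuations depend only on the nonrandom limit $\underline m_2(\phi_k)$. Your martingale route is viable (it is essentially the strategy of \cite{zhang2022asymptotic}) but heavier; more importantly, you misattribute the role of Assumptions \ref{ass5}--\ref{ass6}: they are not needed for the asymptotic independence (Lemma \ref{lemma5} holds under Assumptions \ref{ass1}--\ref{ass4} alone, which is why Theorem \ref{thm1} needs no such hypotheses), but only to guarantee that $\Theta_{1,n}$ and $\Theta_{2,n}$ converge, i.e.\ that the bulk covariance $\kappa_{nst}$ and hence the matrix $\bPsi$ have limits so that a genuinely multivariate limit law can be stated.
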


\begin{remark}\label{remark}

\textcolor{black}{Define
	$ \tilde{\psi}_{nst}=\dfrac{\sum_{k=1}^{K}\varpi_{nks}\varpi_{nkt}s_{k}^{2}-\frac{1}{4\pi^{2}}\oint_{\mathcal{C}_{1}}\oint_{\mathcal{C}_{2}}f_{s}\left(z_{1} \right)f_{t}\left(z_{2} \right)\vartheta_{n}^{2}dz_{1}dz_{2}}{\varsigma_{s}\varsigma_{t}}. $
	If $ \bm{\tilde{\Psi}}_n=(\tilde{\psi}_{nst})_{h\times h}$ is invertible for all sufficiently large $n$, we conjecture that, similar to Theorem \ref{thm1}, the convergence
 $$ \bm{\tilde{\Psi}}_n^{-1/2}\left( \frac{Y_{1}-\mu_{1}}{\sigma_{1}},\dots,\frac{Y_{h}-\mu_{h}}{\sigma_{h}}\right)^\top\stackrel{d}{\rightarrow}N_{h}\left(0, \bbI_h \right)    $$
holds without requiring Assumptions \ref{ass5} and \ref{ass6}. It should be noted that $  \bm{\tilde{\Psi}}_n $ is singular if the set of test functions is linearly dependent. However, determining the invertibility of $ \bm{\tilde{\Psi}}_n$ becomes challenging when the test functions are completely linearly independent. Hence, the extension to the removal of Assumptions \ref{ass5} and \ref{ass6} in Theorem \ref{thm2} is left for future work. }

\end{remark}

\begin{remark}\label{defineDP}
\textcolor{black}{
If $ \varpi_{nkl}\rightarrow 0$ as $n\rightarrow \infty$, Theorem \ref{thm2} coincides with Theorem 2.1 of \cite{10.1214/14-AOS1292}. 
If the test functions $ f_l=x $ and $ x^2 $, then Theorem \ref{thm2} reduces to Theorem 2.1 of \cite{yin2021spectral}. Notably, the results in \cite{yin2021spectral} required higher-order moment conditions.}
\end{remark}

{\color{black}{
\section{Application}\label{section 5}
In this section, we focus on a hypothesis test concerning whether the population covariance matrix $ \bSi $ is equal to the identity matrix or a spiked model, i.e.,
\begin{align} H_{0}:\bSi=\bbI_{p}\quad \text{vs.} \quad
	H_{1}:\bSi=\mathbf{V}\left(\begin{array}{cc}
		\bbD_{1} & 0 \\
		0 & \bbI_{p-M}
	\end{array}\right) \mathbf{V}^{\ast},\label{alter}
\end{align} 
where $\bbD_{1} $ is a diagonal matrix of the diverging spiked eigenvalues of $ \bSi $. There are several classical test statistics for this problem, but due to the limited length of this paper, we only consider the likelihood ratio (LR) test statistic \citep{Wilks38L} and the Nagao's trace (NT) test statistic \citep{Nagao1973} in this section. 
Specifically, the LR and NT statistics can be formulated as
$$
L=\operatorname{tr} \bbB-\log \left|\bbB\right|-p~~\mbox{and}~~W=\mathrm{tr}(\bbB-\bbI_{p})^{2}, 
$$
respectively.  Under the null hypothesis, the asymptotic properties of the LR and NT statistics for high-dimensional settings have been investigated extensively in the literature; here, we refer to \cite{10.1214/09-AOS694,JiangY13C,Ledoit02,wang2013sphericity,OnatskiM13A} for more details. 
Thus, in this section, we mainly focus on the alternative hypothesis. However,  to provide  better comparisons, we also present the asymptotic distributions under the null hypothesis in the following theorems.

\subsection{Asymptotic results for the LR and NT statistics}
In this subsection, we present the asymptotic results for LR and NT test statistics for the testing problem \eqref{alter}. 
\begin{thm}[CLT for the LR statistic]\label{thm3}
	Under Assumptions \ref{ass1}--\ref{ass4} with $c_{n}=p / n \rightarrow c \in(0,1)$, we have 
	\begin{itemize}
		\item (Under $H_0$) $$\dfrac{L-p\ell_l -\mu_{l}}{\varsigma_{l}} \stackrel{d}{\longrightarrow} N(0,1),  $$
		where 
		\begin{align*}
			\ell_l=1-\frac{c_{n}-1}{c_{n}} \log \left(1-c_{n}\right),~~~
			\mu_{l} = -\frac{\log \left(1-c_{n}\right)}{2}\al_{x}+\frac{c_{n}}{2}\beta_{x}
		\end{align*}
		and$$
		\varsigma_{l}^{2} =(\al_{x}+1)(-\log \left(1-c_{n}\right)- c_{n}).$$
		\item  (Under $H_1$) $$\dfrac{L-(p-M)\breve{\ell}_l  -\breve{\mu}_{l}}{\breve{\varsigma}_{l}} \stackrel{d}{\longrightarrow} N(0,1),  $$
		where 
		\begin{align*}
			\breve{\ell}_l=&1-\frac{c_{nM}-1}{c_{nM}} \log \left(1-c_{nM}\right),~
			\breve{\mu}_{l}= -\frac{\log \left(1-c_{nM}\right)}{2}\al_{x}+\frac{c_{nM}}{2} \beta_{x} \\
			&+\sum_{k=1}^{K}d_{k}\left( \phi_{n}\left({\al}_{k} \right)-\log\phi_{n}\left({\al}_{k} \right)-1 \right)-M(c_{nM}+\log(1-c_{nM})) \end{align*}	and\begin{align*}
			\breve{\varsigma}_{l}^{2} =&\sum_{k=1}^{K}\frac{\left(\phi_{n}\left(\al_{k} \right)-1  \right)^{2} }{n}s_{k}^{2}+(\al_{x}+1)\left(-\log(1-c_{nM})-c_{nM}\right).
		\end{align*}	

	\end{itemize}
	%
\end{thm}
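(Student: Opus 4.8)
The plan is to recognize the LR statistic as the linear spectral statistic $L=\sum_{j=1}^{p}f(\lambda_j)$ associated with the single test function $f(x)=x-\log x-1$, and to specialize Theorem~\ref{thm1} to it. This $f$ is analytic on $(0,\infty)$ and satisfies Assumption~\ref{ass4}, since $f'(x)=1-1/x\to 1$ as $x\to\infty$. In both regimes the bounded part of $\bSi$ is the identity, so $\bbD_2=\bbI$, hence $H_{2n}=F^{\bbD_2}=\delta_1$ and $c_{nM}=(p-M)/n$; the only structural difference is $M=0$ under $H_0$ versus $M\ge 1$ under $H_1$. Once Theorem~\ref{thm1} is invoked, the remaining work is the closed-form evaluation, for this $f$ and for $H_{2n}=\delta_1$, of the centering constants and the contour integrals appearing there.

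\emph{Under $H_0$.} Here $M=0$, so $Y_1=L-p\int f\,dF^{c_n,\delta_1}$; the Marchenko--Pastur moments $\int x\,dF^{c_n,\delta_1}=1$ and $\int\log x\,dF^{c_n,\delta_1}=\tfrac{c_n-1}{c_n}\log(1-c_n)-1$ give $p\int f\,dF^{c_n,\delta_1}=p\ell_1$, so Theorem~\ref{thm1} yields $(L-p\ell_1-\mu_1)/\varsigma_1\stackrel{d}{\rightarrow}N(0,1)$. It then remains to evaluate $\mu_1$ and $\varsigma_1^2$ from the Section~\ref{section 2} formulas with $H_n=\delta_1$. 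Writing $f=g-h$ with $g(x)=x$, $h(x)=\log x+1$ and using bilinearity: the mean correction of $g$ vanishes because $\mathbb{E}\operatorname{tr}\bbB=p$ already equals $p\int x\,dF^{c_n,\delta_1}$, the mean correction of $h$ is the classical one for the logarithm under an identity population, and the $\beta_x$-contributions of the $g$--$g$, $g$--$h$ and $h$--$h$ pieces of the double $\vartheta_n^2$ integral cancel; this produces $\mu_1=-\tfrac12\log(1-c_n)\al_x+\tfrac{c_n}{2}\beta_x$ and $\varsigma_1^2=(\al_x+1)(-\log(1-c_n)-c_n)$, the known Bai--Silverstein evaluations from the sphericity-testing literature.

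\emph{Under $H_1$.} Applying Theorem~\ref{thm1} to the same $f$ with $M\ge 1$ gives $(Y_1-\mu_1)/\varsigma_1\stackrel{d}{\rightarrow}N(0,1)$, where
\[
Y_1=L-p\int f\,dF^{c_n,H_n}-\sum_{k=1}^{K} d_k f\bigl(\phi_n(\al_k)\bigr)-\frac{M}{2\pi i}\oint_{\mathcal{C}} f(z)\,\frac{\underline{m}_{2n0}'(z)}{\underline{m}_{2n0}(z)}\,dz.
\]
The task is to recast this centering into the form of the theorem. The spike term is already explicit, $\sum_k d_k f(\phi_n(\al_k))=\sum_k d_k(\phi_n(\al_k)-\log\phi_n(\al_k)-1)$; $\mu_1$ evaluated with $c_{nM}$ and $H_{2n}=\delta_1$ equals $-\tfrac12\log(1-c_{nM})\al_x+\tfrac{c_{nM}}{2}\beta_x$; and the remaining bulk pieces satisfy the bookkeeping identity
\[
p\int f\,dF^{c_n,H_n}+\frac{M}{2\pi i}\oint_{\mathcal{C}} f(z)\,\frac{\underline{m}_{2n0}'(z)}{\underline{m}_{2n0}(z)}\,dz=(p-M)\,\breve{\ell}_1-M\bigl(c_{nM}+\log(1-c_{nM})\bigr)+o(1),
\]
obtained by relating $F^{c_n,H_n}$ (whose generating measure $H_n$ carries $M$ atoms at the origin) to $F^{c_{nM},\delta_1}$ and evaluating the residues of $f(z)\underline{m}_{2n0}'(z)/\underline{m}_{2n0}(z)$. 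Adding the three contributions reproduces $(p-M)\breve{\ell}_1+\breve{\mu}_1$, so $L-(p-M)\breve{\ell}_1-\breve{\mu}_1=Y_1-\mu_1+o(1)$. For the variance, $f'(x)=1-1/x$ gives $\varpi_{nk1}=\phi_n(\al_k)f'(\phi_n(\al_k))/\sqrt{n}=(\phi_n(\al_k)-1)/\sqrt{n}$, so the spike part of $\varsigma_1^2$ is $\sum_k(\phi_n(\al_k)-1)^2 s_k^2/n$, while the bulk double integral of $f$ against $\vartheta_n^2$ (with $H_{2n}=\delta_1$) is evaluated exactly as under $H_0$ and equals $(\al_x+1)(-\log(1-c_{nM})-c_{nM})$; together these give $\breve{\varsigma}_1^2$, which finishes the proof.

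\emph{Main obstacle.} The only substantive step is the constant/contour bookkeeping under $H_1$: proving the displayed identity by residue calculus for $f(x)=x-\log x-1$, which requires careful handling of the interplay between the $M$ null atoms of $H_n$ and the singularity of $\log x$ at the origin, and also checking that the contour $\mathcal{C}$ can be drawn inside the analytic region of $f$ while still enclosing the support of $F^{c_n,H_n}$. This is exactly where the hypothesis $c\in(0,1)$ enters: it keeps the bulk bounded away from $0$ and guarantees that $\bbB$ is invertible, so that $\log|\bbB|$ is well defined. The remaining computations — the Marchenko--Pastur integrals defining $\ell_1$ and $\breve{\ell}_1$ and the closed-form evaluation of the $\vartheta_n^2$ double integral — are routine Bai--Silverstein residue evaluations.
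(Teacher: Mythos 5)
Your proposal is correct and follows essentially the same route as the paper's proof: specialize Theorem~\ref{thm1} to $f(x)=x-\log x-1$, use Lemma~\ref{lemma1} to rewrite the centering, evaluate the $M$-dependent contour integral $\frac{M}{2\pi i}\oint f(z)\underline{m}_{2n0}'(z)/\underline{m}_{2n0}(z)\,dz=-M(c_{nM}+\log(1-c_{nM}))$ by residues, and read off the spike variance from $\varpi_{nk1}=(\phi_n(\al_k)-1)/\sqrt{n}$ (the paper handles $H_0$ by citing the substitution-principle CLT of Zheng, Bai and Yao directly). The one step you gloss over is the bulk variance under $H_1$: since $\bGma\bGma^*=\bbV_2\bbV_2^*$ is a rank-$(p-M)$ projection rather than $\bbI_p$, the double integral against $\vartheta_n^2$ is not literally the $H_0$ computation, and the paper verifies explicitly that the effective ratios entering $\Theta_{1,n}$ and $\Theta_{2,n}$ differ from $c_{nM}$ only by $O(M/n)$, which is what justifies your reduction.
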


\begin{remark}
	If $ c\geq1$, then $\bbB_n$ could be singular for large $n$, which would give rise to an undefined LR statistic $L$. Thus, the additional restriction $c<1$ is added in Theorem \ref{thm3}. 
\end{remark}
\begin{remark}
	Note that $\phi_{n}\left({\al}_{k} \right) $ and $s_k$ are defined in Section \ref{section 2}. Under the alternative hypothesis  $H_1$ in \eqref{alter}, we can adopt simplification  $\phi_n\left(\alpha_k \right)=\alpha_k+c_{nM}+o(1)$,  and $$s_{k}^{2}= {\left(\al_{x}+1 \right)d_{k} }+{ \beta_{x}\sum_{j_{1}, j_{2}\in J_{k}}\mathcal{U}_{j_{1}j_{1}j_{2}j_{2}} }+o(1). $$
\end{remark}

\begin{thm}[CLT for the NT statistic]\label{thm4}
	Under Assumptions \ref{ass1}--\ref{ass4}, we have
	\begin{itemize}
		\item (Under $H_0$) $$ \frac{W-pc_{n}-\mu_{w}}{ \varsigma_{w} }\stackrel{d}{\longrightarrow} N(0,1),   $$
		where 
		\begin{align*}
			\mu_{w} =c_{n}(\al_{x}+\beta_{x})~~\mbox{and}~~		\varsigma_{w}^{2} =(\al_{x}+1)(4c_{n}^{3}+2c_{n}^{2})+4\beta_{x}c_{n}^{3}.
		\end{align*}
		
		\item  (Under $H_1$)
		\begin{align*}
			\frac{W-(p-M)c_{nM} -\breve{\mu}_{w}}{\breve{\varsigma}_{w}}\stackrel{d}{\longrightarrow} N(0,1), 
		\end{align*}	
		where  \begin{align*}
			\breve{\mu}_{w}=&c_{nM}(\al_{x}+\beta_{x})+\sum_{k=1}^{K}d_{k}\left( \phi_{n}(\al_{k})-1 \right)^2-M c_{nM}^{2}\end{align*}and
			 \begin{align*}
			\breve{\varsigma}_{w}^{2} =&\sum_{k=1}^{K}\frac{4\phi_{n}^{2}\left(\al_{k} \right) \left(\phi_{n}\left(\al_{k} \right)-1  \right)^{2} }{n}s_{k}^{2}+(\al_{x}+1)\left(4c_{nM}^{3}+2c_{nM}^{2} \right)+4\beta_{x}c_{nM}^{3}.
		\end{align*}
		
	\end{itemize}

\end{thm}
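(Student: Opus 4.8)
The plan is to derive Theorem~\ref{thm4} as a direct application of Theorem~\ref{thm1} (the single-LSS CLT) with the specific choice of test function $f(x)=(x-1)^{2}$, which is analytic everywhere and satisfies Assumption~\ref{ass4} since $f'(x)=2(x-1)$ is asymptotically homogeneous of degree one. Observe that $W=\mathrm{tr}(\bbB-\bbI_{p})^{2}=\sum_{j=1}^{p}f(\lambda_{j})=p\int f\,dF^{\bbB}$, so after centering by $p\int f\,dF^{c_{n},H_{n}}$ and subtracting the spike-centering and the contour term, $W$ equals $Y_{1}$ plus deterministic quantities. Thus the first step is purely computational: identify the three deterministic pieces in $Y_{1}$ — namely $p\int f\,dF^{c_{n},H_{n}}$, the spike sum $\sum_{k}d_{k}f(\phi_{n}(\alpha_{k}))$, and the contour integral $\frac{M}{2\pi i}\oint_{\mathcal C}f(z)\frac{\underline m_{2n0}'(z)}{\underline m_{2n0}(z)}dz$ — and add back $\mu_{1}$ to obtain the total centering constant, which should collapse to $(p-M)c_{nM}+\breve\mu_{w}$ after routine Stieltjes-transform evaluations.

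The second step is to evaluate these integrals explicitly. Under $H_{1}$ the bulk part of $\bSi$ is the identity on a $(p-M)$-dimensional space, so $H_{2n}=\delta_{1}$, which makes every quantity built from $\underline m_{2n0}$ that of a Mar\v cenko--Pastur law with ratio $c_{nM}$; the integral $\int (x-1)^{2}dF^{c_{nM},\delta_{1}}(x)=c_{nM}$, giving the $(p-M)c_{nM}$ term and, together with the $M$-dimensional correction $-Mc_{nM}^{2}$ coming from the contour term, the bulk contribution to $\breve\mu_{w}$; the $\alpha_{x}$- and $\beta_{x}$-dependent pieces of $\mu_{1}$ reduce, by residue computation against the MP Stieltjes transform, to $c_{nM}(\alpha_{x}+\beta_{x})$. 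The spike-centering term contributes $\sum_{k}d_{k}(\phi_{n}(\alpha_{k})-1)^{2}$ directly. For the variance, I would split $\varsigma_{1}^{2}$ as in \eqref{thm3.1cov}: the first part $\sum_{k}\varpi_{nk1}^{2}s_{k}^{2}$ with $\varpi_{nk1}=\frac{\phi_{n}(\alpha_{k})}{\sqrt n}f'(\phi_{n}(\alpha_{k}))=\frac{2\phi_{n}(\alpha_{k})(\phi_{n}(\alpha_{k})-1)}{\sqrt n}$ yields exactly $\sum_{k}\frac{4\phi_{n}^{2}(\alpha_{k})(\phi_{n}(\alpha_{k})-1)^{2}}{n}s_{k}^{2}$; the second (double-contour) part, evaluated with $H_{2n}=\delta_{1}$, must be shown to equal $(\alpha_{x}+1)(4c_{nM}^{3}+2c_{nM}^{2})+4\beta_{x}c_{nM}^{3}$, which is the classical Bai--Silverstein variance for $f(x)=(x-1)^{2}$ with ratio $c_{nM}$ and can be read off from known formulas (e.g.\ the computation underlying the NT statistic in \cite{wang2013sphericity} or \cite{10.1214/14-AOS1292}).

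The $H_{0}$ statement is the degenerate case $K=0$, $M=0$, so that $Y_{1}$ reduces to the classical BST-type normalized LSS with $H_{n}=\delta_{1}$, $c_{nM}=c_{n}$, and the formulas for $\mu_{w}$ and $\varsigma_{w}^{2}$ follow from the same residue and contour evaluations with $c_{nM}$ replaced by $c_{n}$; this requires no new argument beyond specializing the $H_{1}$ computation. I would present $H_{0}$ first as a warm-up, then note that under $H_{1}$ the $(p-M)$-dimensional bulk block is exactly a null-case problem, so the only genuinely new ingredients are the spike-centering term and the spike-variance term, both of which are supplied verbatim by Theorem~\ref{thm1}.

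The main obstacle will be the bookkeeping in matching the contour integrals: one must verify that the $M$-dimensional correction term $-\frac{M}{2\pi i}\oint_{\mathcal C}f(z)\frac{\underline m_{2n0}'(z)}{\underline m_{2n0}(z)}dz$ in the definition of $Y_{1}$, the $\mu_{1}$ term, and the double-contour variance all assemble correctly into the stated closed-form expressions, and in particular that the cross terms between the ``spike'' and ``bulk'' parts vanish — which is guaranteed abstractly by the asymptotic independence established in the proof of Theorem~\ref{thm1}, but still needs to be tracked through the explicit formulas. A secondary technical point is confirming that $f(x)=(x-1)^{2}$ indeed meets the hypotheses of Theorem~\ref{thm1} (analyticity on a neighborhood of the support of $F^{c_{n},H_{n}}$, including near the diverging spikes $\phi_{n}(\alpha_{k})$, and the ratio condition in Assumption~\ref{ass4}), which is immediate but should be stated. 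Everything else is a residue calculation against the Mar\v cenko--Pastur Stieltjes transform, for which I would invoke the standard identities rather than re-derive them.
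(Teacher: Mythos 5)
Your proposal is correct and follows essentially the same route as the paper: apply Theorem \ref{thm1} with $f_W(x)=(x-1)^2$, evaluate the centering pieces (including the contour term $\frac{M}{2\pi i}\oint f_W(z)\frac{\underline{m}_{2n0}'(z)}{\underline{m}_{2n0}(z)}dz=-Mc_{nM}^2$ by residues) and the double-contour variance via the known Mar\v cenko--Pastur formulas, with the spike contributions read off from $\varpi_{nk1}$ and $s_k^2$. The only detail you gloss over, which the paper tracks explicitly, is that $\Theta_{1,n}$ and $\Theta_{2,n}$ involve constants $\tilde c,\check c$ built from $\bbU_1$ rather than $c_{nM}$ itself, but these differ from $c_{nM}$ by $o(1)$, so your conclusion stands.
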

\begin{remark}
		From the covariance terms $\breve{\varsigma}_{l}^{2}$ and $\breve{\varsigma}_{w}^{2} $, one can find that these CLTs are related to the components of the right singular vectors $\mathbf{U}$, but not to the left singular vectors  $\mathbf{V}$. Furthermore, since $\bbD_2$ is an identity matrix, $\bbU_2$ does not affect the asymptotic CLTs. Therefore, the only singular vectors of $\bbT$ affecting the results are $\bbU_1$, which are involved in $s_k^2$.
		\end{remark}
 The proofs of Theorems \ref{thm3} and \ref{thm4} are given in the supplementary material. To avoid confusion with the classical distributions of the LR test and NT test, we refer to the CLTs above as the corrected LR test (CLRT) and corrected NT test (CNTT) in the sequel. From Theorems \ref{thm3} and \ref{thm4}, we reject the null hypothesis $H_0$ in \eqref{alter} if 
	$$
	L>z_{\xi}{\varsigma_{l}} +p\ell_l +\mu_{l}
	$$ and
	$$
	W>z_{\xi}{\varsigma_{w}}+c_{n}(p+\alpha_{x}+\beta_{x}), 
	$$  where $ \xi $ is the significance level of the test and $ z_{\xi} $ is the $1-\xi$ quantile of the standard Gaussian distribution  $ \Phi$.  For the power functions of CLRT and CNTT, we have the following theorems.
	\begin{thm}[Power function of CLRT] \label{CLRT power}Under Assumptions
	 \ref{ass1}--\ref{ass4} with $c_{n}=p / n \rightarrow c \in(0,1)$  and $H_1$ in \eqref{alter}, we have that the power function of the CLRT $P_L= P(L>z_{\xi}{\varsigma_{l}} +p\ell_l +\mu_{l})$ satisfies 
		\begin{align} 
		P_{L}- \Phi\left(  \frac{  \sum_{k=1}^{K}d_{k}\left( \phi_{n}\left({\al}_{k} \right)-\log\phi_{n}\left({\al}_{k} \right) \right)-M(1+c)-z_{\xi}{\varsigma_{l}} }{\sqrt{\sum_{k=1}^{K}\frac{\left(\phi_{n}\left(\al_{k} \right)-1  \right)^{2} }{n}s_{k}^{2}+(\al_{x}+1)\left(-\log(1-c)-c\right)}}   \right)  \rightarrow 0, \label{powerl}
\end{align}
as $n\to\infty$.
	\end{thm}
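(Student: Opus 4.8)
The plan is to read the power formula directly off the central limit theorem for $L$ under $H_{1}$ proved in Theorem~\ref{thm3}, after rewriting the rejection event through the standardized statistic and simplifying the (deterministic) standardized cutoff. Put $Z_{n}:=\big(L-(p-M)\breve\ell_l-\breve\mu_{l}\big)/\breve\varsigma_{l}$, so that $Z_{n}\stackrel{d}{\rightarrow}N(0,1)$ by Theorem~\ref{thm3}. Expressing the event $\{L>z_{\xi}\varsigma_{l}+p\ell_l+\mu_{l}\}$ in terms of $Z_{n}$ gives $P_{L}=P(Z_{n}>t_{n})$ with
\[
t_{n}:=\frac{z_{\xi}\varsigma_{l}+p\ell_l+\mu_{l}-(p-M)\breve\ell_l-\breve\mu_{l}}{\breve\varsigma_{l}}.
\]
Since the limit law $N(0,1)$ is continuous, P\'olya's theorem gives $\sup_{x}|P(Z_{n}\le x)-\Phi(x)|\to 0$, hence $P_{L}-\big(1-\Phi(t_{n})\big)=P_{L}-\Phi(-t_{n})\to 0$ for the deterministic sequence $t_{n}$. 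It therefore suffices to prove $\Phi(-t_{n})-\Phi(\tilde t_{n})\to 0$, where $\tilde t_{n}$ denotes the argument of $\Phi$ in \eqref{powerl}.

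Next I would carry out the algebraic reduction of $-t_{n}$ using the explicit forms of $\ell_l,\breve\ell_l,\mu_{l},\breve\mu_{l}$ from Theorem~\ref{thm3}. From $p/c_{n}=n$ and $(p-M)/c_{nM}=n$ one gets $p\ell_l=p+(n-p)\log(1-c_{n})$ and $(p-M)\breve\ell_l=(p-M)+(n-p+M)\log(1-c_{nM})$; since $c_{n}-c_{nM}=M/n$, a first-order expansion of $\log(1-c_{nM})$ about $\log(1-c_{n})$ yields $(p-M)\breve\ell_l-p\ell_l=M\log(1-c)+o(1)$. In $\breve\mu_{l}-\mu_{l}$ the $\al_{x}$- and $\beta_{x}$-terms are $O(n^{-1})$, the spiked sum contributes $\sum_{k=1}^{K}d_{k}\big(\phi_{n}(\al_{k})-\log\phi_{n}(\al_{k})\big)-M$ (using $\sum_{k}d_{k}=M$), and $-M(c_{nM}+\log(1-c_{nM}))\to-M(c+\log(1-c))$. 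Summing,
\[
(p-M)\breve\ell_l+\breve\mu_{l}-p\ell_l-\mu_{l}=\sum_{k=1}^{K}d_{k}\big(\phi_{n}(\al_{k})-\log\phi_{n}(\al_{k})\big)-M(1+c)+o(1),
\]
and, since $c_{nM}\to c$, $\breve\varsigma_{l}^{2}=\sum_{k=1}^{K}\frac{(\phi_{n}(\al_{k})-1)^{2}}{n}s_{k}^{2}+(\al_{x}+1)\big(-\log(1-c)-c\big)+o(1)=:\tilde d_{n}^{2}+o(1)$. Consequently $-t_{n}=\big(\tilde N_{n}+o(1)\big)/\breve\varsigma_{l}$, where $\tilde N_{n}$ is the numerator and $\tilde d_{n}$ the denominator of $\tilde t_{n}$.

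Finally, to pass from $-t_{n}$ to $\tilde t_{n}$ I would use that $-\log(1-c)>c$ on $(0,1)$, so that $\breve\varsigma_{l}$ and $\tilde d_{n}$ are both bounded below by $\delta_{0}:=\sqrt{(\al_{x}+1)(-\log(1-c)-c)}>0$; hence $\breve\varsigma_{l}-\tilde d_{n}=(\breve\varsigma_{l}^{2}-\tilde d_{n}^{2})/(\breve\varsigma_{l}+\tilde d_{n})=o(1)$. If $\tilde t_{n}$ stays in a bounded set, then $\tilde N_{n}$ and $\tilde d_{n}$ are bounded, whence $-t_{n}-\tilde t_{n}=o(1)$ and $\Phi(-t_{n})-\Phi(\tilde t_{n})\to 0$ by continuity of $\Phi$; if $\tilde t_{n}\to+\infty$ (which in fact always holds here, since Assumption~\ref{ass3} forces $\al_{K}\to\infty$ and hence $\phi_{n}(\al_{k})-\log\phi_{n}(\al_{k})\to\infty$), then $\tilde N_{n}\to+\infty$, and because the spiked term $n^{-1}\sum_{k}(\phi_{n}(\al_{k})-1)^{2}s_{k}^{2}$ in $\tilde d_{n}^{2}$ is of smaller order than $\tilde N_{n}^{2}$ (it is $O(n^{-1}\phi_{n}(\al_{1})^{2})$ versus $\gtrsim\phi_{n}(\al_{1})^{2}$, as $s_{k}^{2}$ is bounded), $-t_{n}\to+\infty$ as well, so both $\Phi(-t_{n})$ and $\Phi(\tilde t_{n})$ tend to $1$. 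In either case $\Phi(-t_{n})-\Phi(\tilde t_{n})\to 0$, which together with the first paragraph yields \eqref{powerl}.

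The hard part is the reconciliation in the last two paragraphs: one has to verify that the gap between the exact standardized cutoff $-t_{n}$ and its limiting surrogate $\tilde t_{n}$ is genuinely $o(1)$ as a deterministic quantity, despite the fact that $\phi_{n}(\al_{k})$—and therefore both the numerator $\tilde N_{n}$ and the variance $\breve\varsigma_{l}^{2}$—may diverge; this is precisely what forces the positive lower bound $\delta_{0}$ on the variance and the case split according to whether $\tilde t_{n}$ remains bounded. The CLT of Theorem~\ref{thm3} and P\'olya's uniform-convergence theorem enter only as black boxes; everything else is bookkeeping with the closed-form centering and scaling constants.
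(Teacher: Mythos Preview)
Your proposal is correct and follows essentially the same route as the paper: standardize $L$ under $H_{1}$ via the CLT of Theorem~\ref{thm3}, rewrite $P_{L}$ as $P(Z_{n}>t_{n})$, and then simplify the deterministic cutoff $t_{n}$ using the closed-form expressions for $\ell_l,\breve\ell_l,\mu_{l},\breve\mu_{l},\breve\varsigma_{l}$. You are in fact more careful than the paper's own (rather terse) argument, since you explicitly invoke P\'olya's theorem and handle the possibility that $\tilde t_{n}$ diverges---the paper simply plugs in the limiting constants without discussing this.
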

\begin{thm}[Power function of CNTT] \label{CNTT power}Under Assumptions
	 \ref{ass1}--\ref{ass4}   and $H_1$ in \eqref{alter}, we have that the power function of the CNTT $P_W= P(W>z_{\xi}{\varsigma_{w}}+c_{n}(p+\alpha_{x}+\beta_{x}))$ satisfies 
		\begin{align} 
		P_{W}- \Phi\left(  \frac{  \sum_{k=1}^{K}d_{k}\left( \phi_{n}(\al_{k})-1 \right)^2-M c^{2}-2Mc-z_{\xi}{\varsigma_{w}} }{\sqrt{\sum_{k=1}^{K}\frac{4\phi_{n}^{2}\left(\al_{k} \right) \left(\phi_{n}\left(\al_{k} \right)-1  \right)^{2} }{n}s_{k}^{2}+(\al_{x}+1)\left(4c^{3}+2c^{2} \right)+4\beta_{x}c^{3}}}   \right)  \rightarrow 0, \label{powerc}
\end{align}
as $n\to\infty$.

	\end{thm}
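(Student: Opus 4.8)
\textbf{Proof plan for Theorem~\ref{CNTT power}.}

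The plan is to read the power expansion off the central limit theorem for $W$ under $H_1$ already established in Theorem~\ref{thm4}, combined with the classical fact (P\'olya's theorem) that weak convergence to a law with continuous distribution function is automatically uniform. Write $\zeta_n=\big(W-(p-M)c_{nM}-\breve{\mu}_{w}\big)/\breve{\varsigma}_{w}$, so that Theorem~\ref{thm4} gives $\zeta_n\stackrel{d}{\rightarrow}N(0,1)$ and hence $\sup_{x\in\mathbb{R}}\big|\mathrm{P}(\zeta_n\le x)-\Phi(x)\big|\to 0$. The rejection event $\{W>z_\xi\varsigma_w+c_n(p+\alpha_x+\beta_x)\}$ is exactly $\{\zeta_n>t_n\}$ with the deterministic sequence
$$ t_n=\frac{z_\xi\varsigma_w+c_n(p+\alpha_x+\beta_x)-(p-M)c_{nM}-\breve{\mu}_{w}}{\breve{\varsigma}_{w}}. $$
Consequently $\big|P_W-\Phi(-t_n)\big|=\big|\mathrm{P}(\zeta_n\le t_n)-\Phi(t_n)\big|\le\sup_x|\mathrm{P}(\zeta_n\le x)-\Phi(x)|\to 0$, and it remains only to identify $-t_n$ with the argument of $\Phi$ in \eqref{powerc} up to a vanishing error.

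For the numerator I substitute $\breve{\mu}_{w}=c_{nM}(\alpha_x+\beta_x)+\sum_{k=1}^{K}d_k(\phi_n(\alpha_k)-1)^2-Mc_{nM}^2$ and use $c_{nM}=(p-M)/n$, $c_n=p/n$. Routine algebra gives $(p-M)c_{nM}-c_n p=\big((p-M)^2-p^2\big)/n=-2Mc_n+M^2/n$ and $(\alpha_x+\beta_x)(c_{nM}-c_n)=-M(\alpha_x+\beta_x)/n$, whence
$$ -t_n\breve{\varsigma}_{w}=\sum_{k=1}^{K}d_k\big(\phi_n(\alpha_k)-1\big)^2-Mc_{nM}^2-2Mc_n-z_\xi\varsigma_w+\frac{M^2-M(\alpha_x+\beta_x)}{n}. $$
Since $c_n\to c$ and $c_{nM}\to c$ by Assumption~\ref{ass2}, the right-hand side equals $\sum_{k=1}^{K}d_k(\phi_n(\alpha_k)-1)^2-Mc^2-2Mc-z_\xi\varsigma_w+o(1)$, which is precisely the numerator $A_n\tilde{\varsigma}_{w}$ of \eqref{powerc} with $\tilde{\varsigma}_{w}^{2}:=\sum_{k=1}^{K}\tfrac{4\phi_n^2(\alpha_k)(\phi_n(\alpha_k)-1)^2}{n}s_k^2+(\alpha_x+1)(4c^3+2c^2)+4\beta_x c^3$ the quantity under the square root there. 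Comparing with the formula for $\breve{\varsigma}_{w}^{2}$ in Theorem~\ref{thm4}, the two differ only by the replacement of $c_{nM}$ by $c$ in the bounded additive terms, so $\breve{\varsigma}_{w}^{2}-\tilde{\varsigma}_{w}^{2}=o(1)$; since $\tilde{\varsigma}_{w}^{2}$ is bounded away from $0$, this gives $\breve{\varsigma}_{w}/\tilde{\varsigma}_{w}\to 1$ and therefore $-t_n=A_n(1+o(1))+o(1)$.

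The one point that needs care --- and the main obstacle --- is that $\sum_{k=1}^{K}d_k(\phi_n(\alpha_k)-1)^2$, and likewise $\breve{\varsigma}_{w}$ and $\tilde{\varsigma}_{w}$, may diverge when the diverging spikes $\alpha_k$ grow fast, so neither $t_n$ nor $A_n$ need converge; this is exactly why the uniform version of the CLT (P\'olya) rather than plain weak convergence is what makes the first paragraph go through, and why the final step $\Phi(-t_n)-\Phi(A_n)\to 0$ must be argued without boundedness. One passes to subsequences: if $A_n$ stays bounded along a subsequence, then so does $-t_n=A_n(1+o(1))+o(1)$, and the Lipschitz bound $|\Phi(a)-\Phi(b)|\le|a-b|/\sqrt{2\pi}$ yields $\Phi(-t_n)-\Phi(A_n)\to 0$ there; if $A_n\to\pm\infty$ along a subsequence, then $-t_n\to\pm\infty$ with the same sign, so $\Phi(A_n)$ and $\Phi(-t_n)$ tend to the same limit ($1$ or $0$). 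As every subsequence contains a further subsequence of one of these two types, $\Phi(-t_n)-\Phi(A_n)\to 0$ in full, and combining this with $P_W-\Phi(-t_n)\to 0$ establishes \eqref{powerc}. (The proof of Theorem~\ref{CLRT power} is entirely parallel, starting from the CLRT limit in Theorem~\ref{thm3}.)
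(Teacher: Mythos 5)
Your proposal is correct and follows essentially the same route as the paper: both read the power off the CLT for $W$ under $H_1$ in Theorem \ref{thm4}, rewrite the rejection event in terms of the standardized statistic, and then use $c_n,c_{nM}\to c$ to simplify the centering and scaling (your algebra for $-t_n\breve{\varsigma}_w$ matches the paper's computation of $W_1-W_0$). The only difference is that you make explicit, via P\'olya's theorem and the subsequence argument, the step the paper states informally as ``$P_W$ is approximate to $\Phi(\cdot)$,'' which is a welcome tightening rather than a different method.
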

\begin{remark}
Since $s_{k}^{2}$ is nonrandom and of order $O(1)$, $ P_{L}$ and $P_{W}$ tend to 1 as $\alpha_1\to\infty$.
The detailed analysis of the power functions of $ P_{L}$ and $P_{W}$ is discussed in the next subsection. 
\end{remark}

\subsection{Power analysis}\label{poweranalysis}
This subsection discusses the power functions of $ P_{L}$ and $P_{W}$. For simplicity, in this subsection, we assume that $\{x_{ij}\}$ are real, i.e., $\alpha_{x}=1$. We first derive the asymptotic power of Roy's largest root test (RLRT) to detect $ H_1 $ in \eqref{alter} for comparison. Recall RLRT statistic $\lambda_1.$ Under Assumptions
	 \ref{ass1}--\ref{ass4}   and $H_0$ in \eqref{alter}, it follows from Theorem 2.7 of \cite{DingY18N} that 
\begin{align*}
  \frac{\lambda_1-\mu_{r}}{\varsigma_{r}}\stackrel{d}{\rightarrow} F_{TW},
\end{align*}
where  $ \mu_r=\left(1+\sqrt{c_n}\right)^{2},  $ $ \varsigma_r=n^{-2/3}\left( 1+\sqrt{c_n}\right)\left(1+\sqrt{c_n^{-1}} \right) ^{1/3} $ and $F_{TW}$ is the Type 1 Tracy-Widom (TW) distribution.
 Let $ t_{\xi} $ be the $1-\xi$ quantile of TW distribution with significance level $\xi$. 
Then we have the following theorem regarding the power function of RLRT.
\begin{thm}[Power function of RLRT]\label{RLRT power} Under Assumptions
	 \ref{ass1}--\ref{ass4}   and $H_1$ in \eqref{alter}, if the multiplicity of $\alpha_1$ is one, then the power function of the RLRT $P_R= \mathbb{P}(\lambda_1>t_{\xi}{\varsigma_{r}}+\mu_r)$ satisfies 
		\begin{align} 
		P_{R}- \Phi\left( - \frac{t_{\xi}{\varsigma_r}+{\mu_r}-\phi_{n}\left(\al_{1}\right)}{s_1\phi_{n}\left(\al_{1}\right)/\sqrt{n}} \right) \rightarrow 0, \label{powerr}
\end{align}
as $n\to\infty$.
\end{thm}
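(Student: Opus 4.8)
The plan is to exploit the asymptotic distribution of the largest spiked sample eigenvalue $\lambda_1$ under the alternative hypothesis $H_1$, which is governed by a \emph{central limit theorem} (Gaussian fluctuations) rather than the Tracy--Widom law. Indeed, when $\alpha_1$ is a diverging spike with multiplicity one, the corresponding sample eigenvalue $\lambda_1$ concentrates around $\phi_n(\alpha_1)$ with Gaussian fluctuations of order $\phi_n(\alpha_1)/\sqrt{n}$; more precisely, using the same machinery that underlies the ``generalized delta method'' for the unbounded part $\sum_{j=1}^M f(\lambda_j)$ together with the known first-order behaviour of spiked eigenvalues in the generalized spiked model, one has
\begin{align*}
	\frac{\lambda_1-\phi_n(\alpha_1)}{s_1\,\phi_n(\alpha_1)/\sqrt{n}}\stackrel{d}{\rightarrow} N(0,1),
\end{align*}
where $s_1^2$ is exactly the variance constant defined in Section \ref{section 2}. (When $\alpha_1$ has multiplicity one, $\lambda_1$ is an isolated eigenvalue, so no eigenvalue-sticking or matrix-valued limit issue arises, and the fluctuation is genuinely one-dimensional Gaussian.) First I would cite or extract this CLT for $\lambda_1$ from the technical development in Section \ref{section 6} / the supplement, since it is the single-spike specialization of the unbounded-part analysis already carried out for Theorem \ref{thm1}.

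Next, I would simply rewrite the power as a probability statement and standardize. By definition $P_R=\mathbb{P}(\lambda_1>t_\xi\varsigma_r+\mu_r)$, so
\begin{align*}
	P_R=\mathbb{P}\!\left(\frac{\lambda_1-\phi_n(\alpha_1)}{s_1\phi_n(\alpha_1)/\sqrt{n}}>\frac{t_\xi\varsigma_r+\mu_r-\phi_n(\alpha_1)}{s_1\phi_n(\alpha_1)/\sqrt{n}}\right).
\end{align*}
Applying the CLT above together with the uniform convergence of the standard normal cdf (Pólya's theorem / Slutsky), the right-hand side converges to $1-\Phi$ evaluated at the threshold, i.e. to $\Phi\!\left(-\dfrac{t_\xi\varsigma_r+\mu_r-\phi_n(\alpha_1)}{s_1\phi_n(\alpha_1)/\sqrt{n}}\right)$, which is precisely \eqref{powerr}. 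The subtraction of this quantity from $P_R$ then tends to $0$, as claimed. One minor technical point worth handling carefully is that the centering/scaling $(\mu_r,\varsigma_r)$ and $t_\xi$ come from the \emph{null} (Tracy--Widom) calibration while the limit is taken under $H_1$; since $\varsigma_r=O(n^{-2/3})$ and $\mu_r=O(1)$ are both negligible compared to $\phi_n(\alpha_1)\to\infty$ when the spike diverges, the argument of $\Phi$ is dominated by $\phi_n(\alpha_1)/(s_1\phi_n(\alpha_1)/\sqrt n)=\sqrt n/s_1\to\infty$, so in fact $P_R\to1$; but the statement \eqref{powerr} is the sharper non-asymptotic-threshold form and follows directly from the standardization above without needing $\alpha_1\to\infty$ in that last step.

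The main obstacle is establishing the single-spike CLT for $\lambda_1$ with the correct variance $s_1^2$ in the diverging-spike regime with a \emph{non-diagonal} $\bGma$ and general fourth moment $\beta_x$, since the classical results on spiked eigenvalue fluctuations (e.g.\ \cite{BAI2012167}, \cite{JiangB21G}) are typically stated for bounded spikes or under diagonality/Gaussian-moment assumptions; one must track how the eigenvector-dependent quantity $\mathcal{U}_{j_1j_1j_2j_2}$ and the factor $\theta_1,\nu_1$ enter, exactly as in the definition of $s_k^2$. This is precisely the content of the ``unbounded part'' analysis feeding into Theorem \ref{thm1}, so the work is to isolate the $M=d_1=1$ case of that argument; once that CLT is in hand, the remainder of the proof is the routine standardization sketched above.
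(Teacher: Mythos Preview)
Your proposal is correct and follows essentially the same route as the paper: invoke the single-spike Gaussian CLT $\sqrt{n}\,(\lambda_1-\phi_n(\alpha_1))/(s_1\phi_n(\alpha_1))\stackrel{d}{\to}N(0,1)$ under $H_1$, then standardize $P_R$ and read off \eqref{powerr} from the normal cdf. The ``main obstacle'' you flag is in fact already resolved: the needed CLT is exactly Lemma~\ref{lemma4} (from \cite{JiangB21G}) specialized to $d_1=1$, which covers diverging spikes with general $\bGma$ and $\beta_x$, so no new argument is required there.
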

The proof of this theorem is postponed to the supplementary material.  It is clear that if $\alpha_1>1+\sqrt{c}$ uniformly, then $P_{R}\to1$ as $n\to\infty$.  According to \cite{Anderson03I}, compared with the classical LSSs, RLRT has the highest asymptotic power to detect rank-one alternatives and under low dimensional settings. This property has also been demonstrated by \cite{Olson74}  and \cite{10.1093/biomet/asw060}.  In the following, we discuss the asymptotic power functions of CLRT, CNTT and RLRT. In particular,  we will show that except for the case in which the number of spikes is equal to 1, CLRT and CNTT may exhibit higher asymptotic power than RLRT in some scenarios.
 
 Define  
\begin{align}\label{varkappaLWR}
\begin{split}
	  \varkappa_L&=\frac{  \sum_{k=1}^{K}d_{k}\left( \phi_{n}\left({\al}_{k} \right)-\log\phi_{n}\left({\al}_{k} \right) \right)-M(1+c)-z_{\xi}{\varsigma_{l}} }{\sqrt{\sum_{k=1}^{K}\frac{\left(\phi_{n}\left(\al_{k} \right)-1  \right)^{2} }{n}s_{k}^{2}-2\left(\log(1-c)+c\right)}} \\
	   \varkappa_W&= \frac{  \sum_{k=1}^{K}d_{k}\left( \phi_{n}(\al_{k})-1 \right)^2-M c^{2}-2Mc-z_{\xi}{\varsigma_{w}} }{\sqrt{\sum_{k=1}^{K}\frac{4\phi_{n}^{2}\left(\al_{k} \right) \left(\phi_{n}\left(\al_{k} \right)-1  \right)^{2} }{n}s_{k}^{2}+2\left(4c^{3}+2c^{2} \right)+4\beta_{x}c^{3}}} \\
	   \varkappa_R&=\frac{\phi_{n}\left(\al_{1}\right)-{\mu_r}-t_{\xi}{\varsigma_r}}{s_1\phi_{n}\left(\al_{1}\right)/\sqrt{n}}.
\end{split}
\end{align}
Since  CLRT, CNTT and RLRT statistics are all asymptotically normally distributed under the alternative hypothesis, 
according to formulas (\ref{powerl})--(\ref{powerr}), comparing the convergence rates of power functions $P_L$, $P_W$, and $P_R$ is equivalent to comparing the divergence rates
 $\varkappa_L$,   $\varkappa_W$ and  $\varkappa_R$ tend to infinity. 
 Note that $\{z_{\xi}{\varsigma_{l}}$, $z_{\xi}{\varsigma_{w}}$, $t_{\xi}{\varsigma_r}$\} are all of order $O(1)$, $\{K,M\}$ are fixed, $0<c<1$,
 $ \phi_{n}(\al_{k})=\al_{k}+c+o(1)$ and $s_{k}^{2}= {2d_{k} }+{ \beta_{x}\sum_{j_{1}, j_{2}\in J_{k}}\mathcal{U}_{j_{1}j_{1}j_{2}j_{2}} }+o(1)$. In the sequel, we use the notations $A_n=\Omega(B_n)$, $A_n\simeq B_n$ and  $A_n\asymp B_n$ to denote $B_n=O(A_n)$,  $A_n= B_n+o(B_n)$ and $C^{-1}A_n< B_n<CA_n$, respectively, for some constant  $C> 1$.  Then, we have the following conclusions. 
 \begin{itemize}
 	\item ($M=1$) For $M=1$, i.e., there is only one diverging spike, we report the divergence rates of
 $\varkappa_L$,   $\varkappa_W$ and  $\varkappa_R$ in Table \ref{rateM1}. 
 	From these results, we can conclude that the RLRT is asymptotically more powerful than CLRT and CNTT whenever $\alpha_1\to\infty$.    
Here, one should note that $1+2c+ \beta_{x}c>0$ and $\log(1-c)+c<0$ provided that $c<1$. Moreover, if $\alpha_1=o(n^{1/2})$, then the divergence rate of  $\varkappa_W$ is higher than  $\varkappa_L$. However, when $\alpha_1=\Omega(n^{1/2})$, $\varkappa_L$ could be larger than $\varkappa_W$, such as $n=o(\alpha_1^2)$.
\begin{table}[htbp]
	\caption{Divergence rates of  $\varkappa_L$,   $\varkappa_W$ and  $\varkappa_R$ when $M=1$  }
	\label{table1}
		\begin{tabular}{c|c|c|c}
			\hline 
			&{$\varkappa_L$ }&
			{$\varkappa_W$ }&
		{$\varkappa_R$} \\
			\hline 
						$\alpha_1=o(n^{\frac1 4})$& $\asymp \alpha_1 $ & $\asymp \alpha_1^2 $ &$\simeq\frac{\sqrt{n}}{s_1} $\\
			\hline 
			$\Omega(n^{\frac14})=\alpha_1=o(n^{\frac12})$& $\asymp \alpha_1 $ & $\simeq\frac{\sqrt{n}}{2\sqrt{s_1^2+c^{2}(1+2c+ \beta_{x}c)n/\alpha_1^4}} $ &$\simeq\frac{\sqrt{n}}{s_1} $\\
			\hline 
			$\alpha_1=\Omega(n^{\frac1 2})$& $\simeq\frac{\sqrt{n}}{\sqrt{s_1^2-2(\log(1-c)+c)n/\alpha_1^2}} $   & $\simeq\frac{\sqrt{n}}{2s_1} $ & $\simeq\frac{\sqrt{n}}{s_1} $ \\
			\hline 

		\end{tabular}
\label{rateM1}
\end{table}
 	
 	\item ($M=2$) For  $M=2$, we assume that the two diverging spikes are not equal, i.e., $d_1=d_2=1$. In addition, for convenience of analysis, we assume that the two spikes have the same divergence rate, i.e., $\alpha_2=k_2\alpha_1$ with some $k_2<1$. The results are presented in Table \ref{rateM2}. Since the power function of RLRT is relevant only to the largest spikes and not to the other spikes, $\varkappa_R$ has the same result for $M= 2$ as it does for $M= 1$, that is $\varkappa_R\simeq{\sqrt{n}}/{s_1}$. Thus, we omit the results of $\varkappa_R$ in Table \ref{rateM2}.   We can conclude from Table \ref{rateM2} that RLRT is asymptotically more powerful than CNTT whenever  $\alpha_1\to\infty$, because $1+k_2^2<2$ and $s_2\geq0$. However, for CLRT, if $n=o(\alpha_1^2)$ and $(s_1^2+k_2^2 s_2^2)/(1+k_2)^2<s_1^2$, then $\varkappa_L$ could be larger than $\varkappa_R$. Since $s_{k}^{2}= 2+ \beta_{x}\sum_{t=1}^p|u_{tk}|^4+o(1)$, with suitable values of  $\beta_{x}\geq-2$, $\sum_{t=1}^p|u_{tk}|^4\in[1/p,1]$ and $k_2<1$, the inequality $(s_1^2+k_2^2 s_2^2)/(1+k_2)^2<s_1^2$ can be satisfied, such as choosing $\beta_x=0$. This property indicates that the LSSs could exhibit higher asymptotic power than RLRT statistic in some special scenarios.

 	\begin{table}[htbp]
 \caption{Divergence rates of  $\varkappa_L$ and  $\varkappa_W$  when $M=2$  and $\alpha_2=k_2\alpha_1$ }
 \label{table2}
 \begin{tabular}{c|c|c}
  \hline 
  &{$\varkappa_L$ }&
  {$\varkappa_W$ }\\
  \hline 
  $\alpha_1=o(n^{\frac1 4})$& $\asymp \alpha_1 $ & $\asymp \alpha_1^2 $ \\
  \hline 
  $\Omega(n^{\frac14})=\alpha_1=o(n^{\frac12})$& $\asymp \alpha_1 $ & $\simeq\frac{\sqrt{n}(1+k_2^2)}{2\sqrt{s_1^2+k_2^4s_2^2+c^{2}(1+2c+ \beta_{x}c)n/\alpha_1^4}} $ \\
  \hline 
  $\alpha_1=\Omega(n^{\frac1 2})$& $\simeq\frac{(1+k_2)\sqrt{n}}{\sqrt{s_1^2+k_2^2 s_2^2-2(\log(1-c)+c)n/\alpha_1^2}} $   & $\simeq\frac{\sqrt{n}(1+k_2^2)}{2\sqrt{s_1^2+k_2^4 s_2^2}} $  \\
  \hline 
  
 \end{tabular}
 \label{rateM2}
\end{table}

 \item ($M\geq3$) For  $M\geq3$, the discussion is analogous to the cases of $M=1$ and $M=2$; thus, we omit the details because of space limitations. We merely wish to emphasize here that when $M \geq 3$, CNTT is also potentially asymptotically more powerful than RLRT. Suppose that  $ \alpha_t=k_t\alpha_1$, $t=1,\dots,M$ and  $ 1=k_{1}>k_{2}>\dots>k_{M}>0. $ It is not difficult to find that if $n=o(\alpha_1^4)$, then $\varkappa_W\simeq{\sqrt{n}\sum_{t=1}^Mk_t^2}/{\sqrt{4\sum_{t=1}^Mk_t^4s_t^2}}$, which can be larger than $\varkappa_R\simeq{\sqrt{n}}/s_1$ with suitable values of $k_t$ and $s_t$, $t=1,\dots,M$. For example, if $M=3$, we can choose $k_2$ and $k_3$ close to 1, while choosing $s_2$ and $s_3$ close to 0.  Notably, if $\beta_x=0$, e.g., $\{x_{ij}\}$ are Gaussian, then $M$ must be at least 5 for $\varkappa_W>\varkappa_R$ to asymptotically hold.
 \end{itemize}
For illustration, we present some graphs of the functions $\varkappa_L$, $\varkappa_W$ and $\varkappa_R$ in \eqref{varkappaLWR} with different numbers of  diverging spikes in Figure \ref{figvarkappa}. 

\begin{figure}[htbp]
	\subfigure[$M=1$]{
		\includegraphics[width=4.5cm,height=4cm]{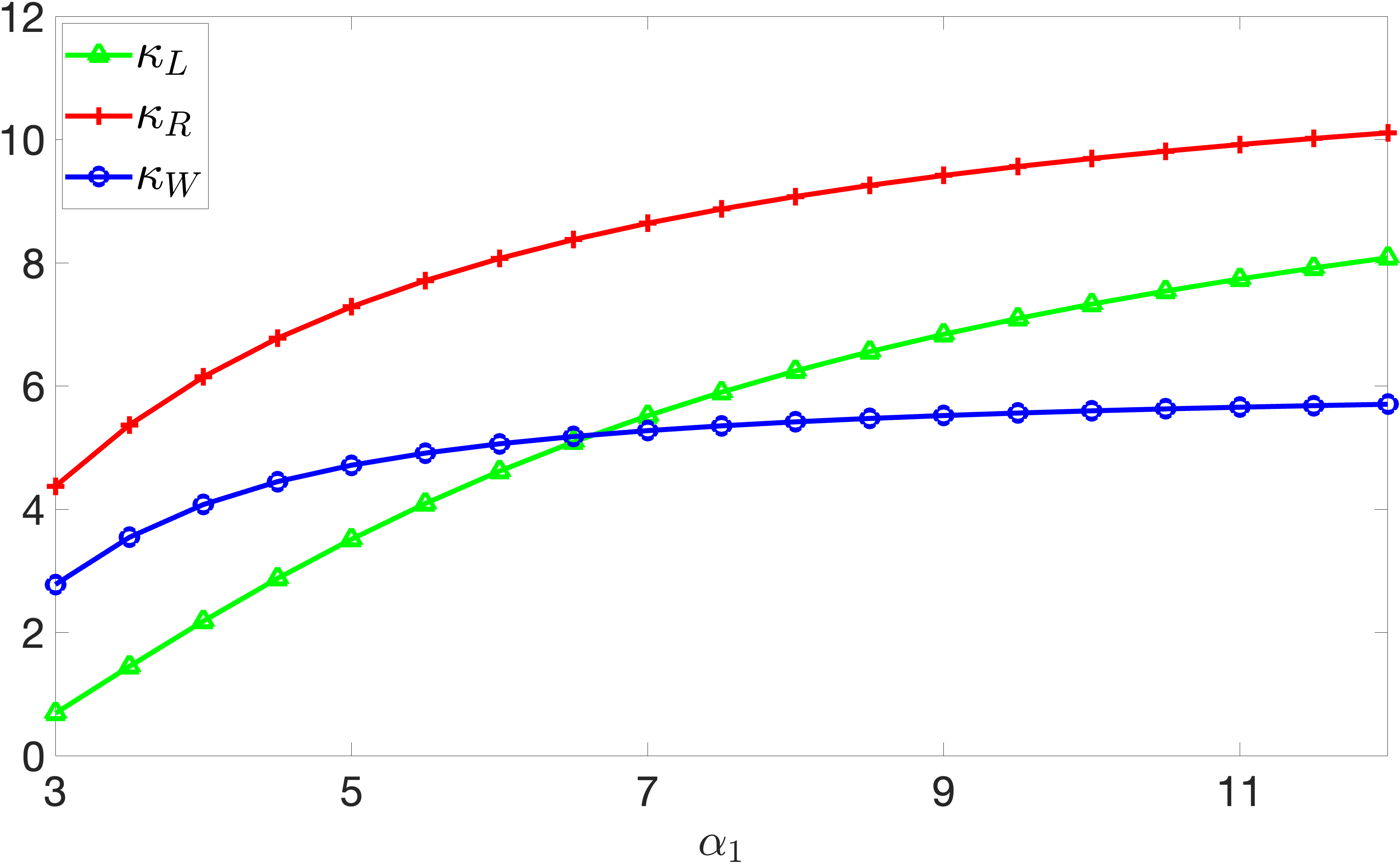}\label{figvarkappa:a}}
\subfigure[$M=2$]{
		\includegraphics[width=4.5cm,height=4cm]{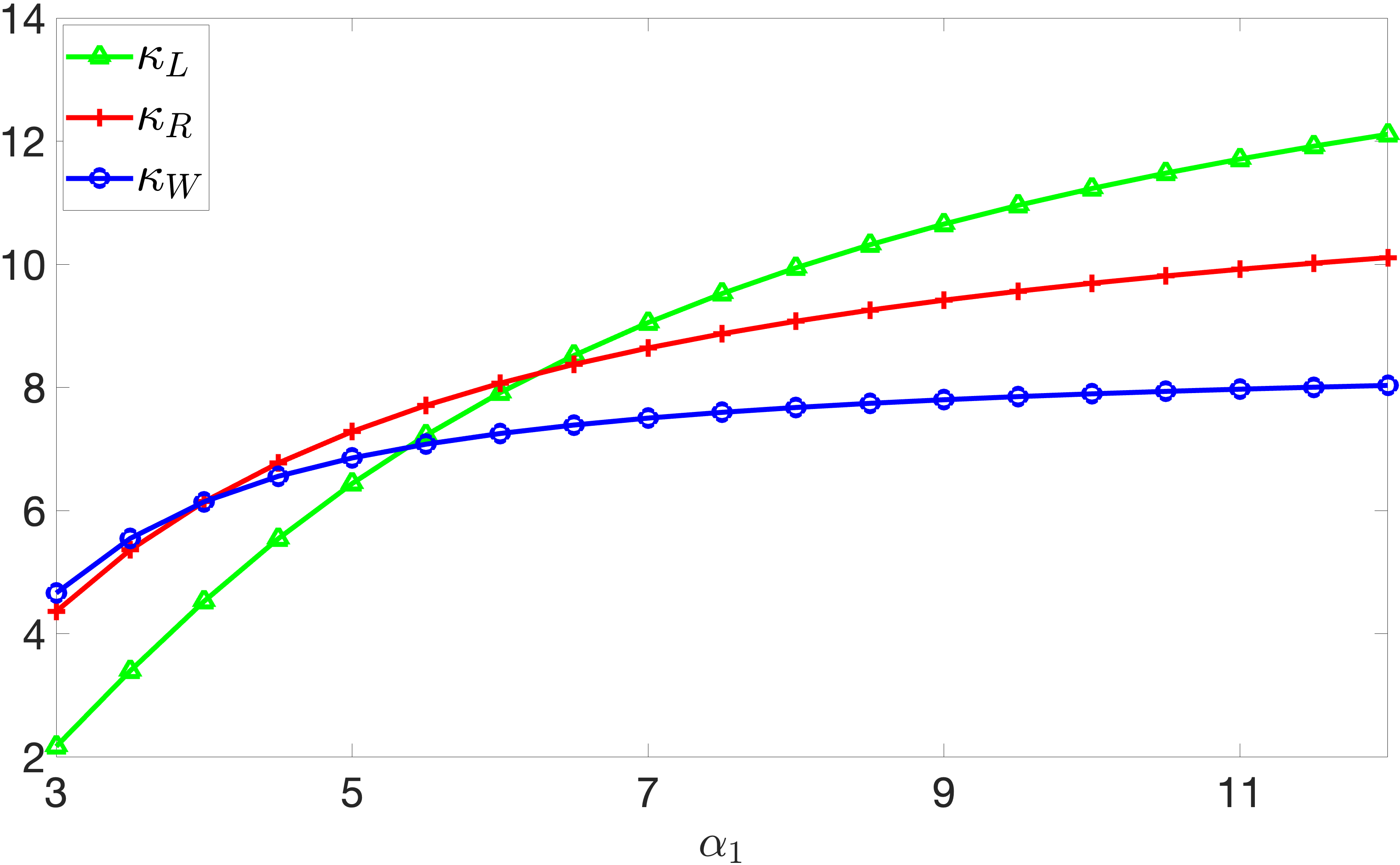}\label{figvarkappa:b}}
		\subfigure[$M=5$]{
			\includegraphics[width=4.5cm,height=4cm]{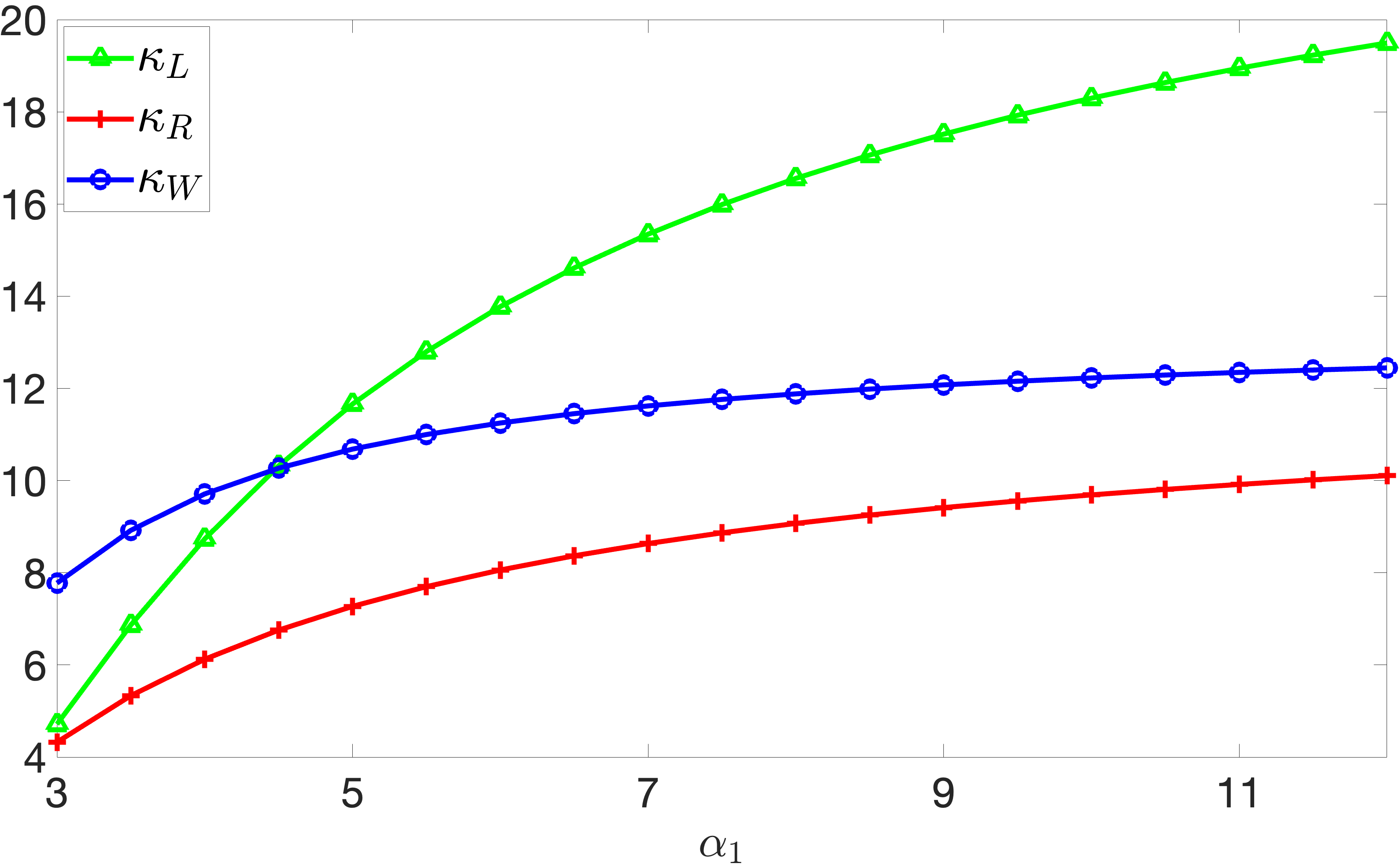}\label{figvarkappa:c}}

			\caption{Graphs of the functions $\varkappa_L$, $\varkappa_W$ and $\varkappa_R$. We fix $ (p,n,\xi,\beta_x)=(100,300,0.05,0)$. The left panel shows the curves for $M=1$.  The middle panel shows the curves for $M=2$ with $\alpha_2=0.9\alpha_1$. The right
			 panel shows the curves for $M=5$ with $\alpha_2=0.9\alpha_1$, $\alpha_3=0.85\alpha_1$, $\alpha_4=0.8\alpha_1$ and
			 $\alpha_5=0.75\alpha_1$.}
			\label{figvarkappa}
\end{figure}
}}

 {\color{black}{
 \section{Numerical studies }\label{simulation}
In this section,  we report short numerical studies as an illustration of our results. 
Our objective in the simulations is to examine the power analysis in subsection \ref{poweranalysis}.

We examine  the following three different distributions of $ x_{ij}: $
\begin{itemize}
	\item[$ Dt_1$:] $\left\lbrace x_{ij}\right\rbrace  $ are i.i.d. samples from a standard Gaussian population. 
	\item[$ Dt_2$:]$\left\lbrace x_{ij}\right\rbrace  $ are i.i.d. samples from  $ Gamma(4,0.5)-2 $.
	\item[$ Dt_3$:] $\left\lbrace x_{ij}\right\rbrace  $ are i.i.d. samples from Uniform population distribution $ U[-\sqrt{3},\sqrt{3}] $.
\end{itemize}
Note that in above settings, $ \beta_{x}=0, \frac{3}{2}, -\frac{6}{5}, $ respectively.

In the current numerical studies, the null hypothesis is defined as 
$ H_0: \bSi=\bbI_{p}. $
For the alternative hypothesis, we adopt the following six population covariance matrix structures:

\begin{itemize}
	\item[$H_1$:] $ \bSi=\bgL_{1}=diag(\al_{1},\underbrace{1,1,\dots,1}_{p-1}) $.
	\item[$H_2$:] $ \bSi=\bgL_{2}=diag(\al_{1},\al_{2},\underbrace{1,1,\dots,1}_{p-2}), \al_2=0.9\al_{1} $.
	\item[$H_3$:] $\bSi=\bgL_{3}=diag(\al_{1},\dots,\al_{5},\underbrace{1,1,\dots,1}_{p-5}) ,\al_2=0.9\al_{1},\al_3=0.85\al_{1},\al_4=0.8\al_{1},\al_5=0.75\al_{1}. $
	\item[$H_4$:] 
$\bSi=\bbU_{0}\bgL_1\bbU_{0}^{*}, $ 
where $ \bbU_{0} $ is the left singular vectors of a $ p\times p $ random matrix  with i.i.d. $ N(0,1)$
 entries
	\item[$H_5$:] 
$ \bSi=\bbU_{0}\bgL_2\bbU_{0}^{*}, $ 
and $ \bbU_{0} $ is defined in $ H_{4}. $
	\item[$H_6$:] $\bSi=\bbU_{0}\bgL_3\bbU_{0}^{*}, $ 
and $ \bbU_{0} $ is defined in $ H_{4}. $
\end{itemize}
Note that
in above settings, $ \bSi $ is diagonal and $\mathcal{U}_{j_{1}j_{1}j_{2}j_{2}}=1$ under $ H_1$--$H_3 $, whereas $ \bSi $ is nondiagonal and $\mathcal{U}_{j_{1}j_{1}j_{2}j_{2}}\asymp1/p$ under $ H_4$--$H_6 $. 

The settings for the significance level $\xi$ are constructed as follows:
$ 0.05,$ $ 0.01, $ and $1\times 10^{-4}.$
The empirical results are obtained based on 10,000 replications with dimension $ p=50,100,200 $, respectively.
We set the RDS $p/n=1/3 $.

In Tables \ref{tablesize}--\ref{tablepowerd3h3l3}, we list the empirical sizes and powers of CLRT, CNTT, and RLRT under different settings.
In the captions of these tables, $ ``(Dt_\ast, H_\ast) " $ stands for the setting $ Dt_\ast, H_\ast $. For the alternative hypothesis, due to space limitations, we present only some selected tables with significant properties in the paper, and the tables for other cases are provided in the supplementary material.
Below are our conclusions based on our simulation studies:
\begin{enumerate}[(1)]
	\item For the null hypothesis, the empirical sizes of CLRT and CNTT are closer to significance levels than that of RLRT overall. 
	This is because the rate of convergence for the largest eigenvalue distributions is slow. This property has been discussed extensively in the literature and we omit the details here. We refer interested readers to \cite{10.1214/aos/1009210544,Johnstone08}.

	\item For the alternative hypothesis: 
	\begin{itemize}
		\item From Table \ref{tablepowerd1h1l1}, it is easy to find that RLRT has the highest asymptotic power under $H_1$. When there are two spikes, as seen  from Table \ref{tablepowerd2h2l1},  CNTT and RLRT have higher asymptotic power than CLRT. As seen from Table \ref{tablepowerd3h3l3}, when there are five spikes, CNTT seems to have the highest asymptotic empirical power. This is consistent with our analysis in subsection \ref{poweranalysis} that when the number of spikes increases, CLRT and CNTT may exhibit higher asymptotic power than RLRT in some scenarios. To be noticed that, in Table \ref{tablepowerd3h3l3}, we only list the powers when $ \xi=1\times10^{-4} $ and consider smaller values of $ \al_{1} $ since the powers of three tests are all equal to 1 when $ \xi=0.05 $ and $ \xi=0.01, $ or when $ \al_1 $ is not small enough,  making comparisons infeasible.
		\item From Tables \ref{tablepowerd1h1l1}--\ref{tablepowerd3h3l3}, we can also find that in each row, the asymptotic power derived under $ (Dt_2,H_\ast) $ is smaller than that under $ (Dt_1,H_\ast) $, and power derived under $ (Dt_3,H_\ast) $ is larger than that under $ (Dt_1,H_\ast). $ This is because when $ \bSi $ is diagonal, a smaller $ \beta_{x} $ may result in higher asymptotic power, corresponding to Theorems \ref{CLRT power} and \ref{CNTT power}.
	\end{itemize} 
\end{enumerate}

\begin{table*}[htbp]
	\caption{Empirical probability of rejecting $ H_{0} $ at significance levels $ 0.05 $ and $ 0.01 $ under assumptions of Gaussian, Gamma, and Uniform distributions}
	\label{tablesize}
	 \begin{tabular}{@{}lrcccccc@{}}
		\hline && \multicolumn{2}{c}{$ Dt_1 $} & \multicolumn{2}{c}{$ Dt_2 $} & \multicolumn{2}{c}{$ Dt_3 $} \\
		\cline{3-8} 
		test &\multicolumn{1}{c}{
			(p,n) } &\multicolumn{1}{c}{$ \xi=0.05 $} & \multicolumn{1}{c}{$ \xi=0.01 $} & \multicolumn{1}{c}{$ \xi=0.05 $} &\multicolumn{1}{c}{$ \xi=0.01 $} & \multicolumn{1}{c}{$\xi=0.05 $} & \multicolumn{1}{c}{$ \xi=0.01 $}\\
		\hline
		$CLRT$  & (50,150)  &  0.0563 & 0.0132 & 0.0545  &0.0123&0.0538&0.0128\\
		&(100,300) & 0.0517  & 0.0105  &0.0567&0.0120&0.0525&0.0102 \\
		&(200,600)  & 0.0543  & 0.0117 &0.0502&0.0110&0.0527&0.0101 \\
		$CNTT$   & (50,150)  & 0.0591  & 0.0138 & 0.0784 &0.0219&0.0522&0.0107\\
		&(100,300)   &  0.0530 & 0.0106 & 0.0633 &0.0175&0.0513&0.0112\\
		&(200,600)   & 0.0530 & 0.0122 & 0.0542 &0.0142&0.0493&0.0117\\
		$RLRT$    & (50,150) & 0.0426  & 0.0089 & 0.1104 &0.0328&0.0215&0.0033\\
		&(100,300)   & 0.0459  & 0.0082 & 0.0969 &0.0260&0.0276&0.0046\\
		&(200,600) & 0.0495 & 0.0095 & 0.0779 &0.0170&0.0279&0.0068\\
		\hline
	\end{tabular}
\end{table*}

\begin{table*}[htbp]
	\caption{Empirical probability of rejecting $ H_{1} $ at significance level $ \xi=0.05 $ under assumptions of Gaussian, Gamma, and Uniform distributions }
	\label{tablepowerd1h1l1}
	\begin{tabular}{@{}lrrrrrrrrrc@{}}
		\hline &&\multicolumn{3}{c}{($ Dt_{1},H_1 $)} & \multicolumn{3}{c}{($ Dt_{2},H_1 $)} & \multicolumn{3}{c}{($ Dt_{3},H_1 $)}\\
		\cline{3-11}
		test & \multicolumn{1}{c}{
			(p,n) } 
		 &\multicolumn{1}{c}{$\alpha_1=3$} & \multicolumn{1}{c}{$\alpha_1=5$} & \multicolumn{1}{c}{$\alpha_1=7$} &
		\multicolumn{1}{c}{$\alpha_1=3$} & \multicolumn{1}{c}{$\alpha_1=5$} & \multicolumn{1}{c}{$\alpha_1=7$} &
		\multicolumn{1}{c}{$\alpha_1=3$} & \multicolumn{1}{c}{$\alpha_1=5$} & \multicolumn{1}{c}{$\alpha_1=7$}
		\\
		\hline
		$CLRT$  & (50,150)  &  0.7236 & 0.9989 & 1  &0.6920&0.9981&1&0.7511&1&1 \\
		&(100,300) & 0.7439  & 0.9999 & 1 &0.7251&0.9998&1&0.7499&1&1 \\
		&(200,600)  & 0.7544  & 0.9999 & 1&0.7468&0.9998&1&0.7632&1&1 \\
		$CNTT$   & (50,150)  & 0.9919  & 1 & 1 &0.9702&1&1&0.9996&1&1\\
		&(100,300)   &  0.9990 & 1 & 1 &0.9898&1&1&1&1&1\\
		&(200,600)   & 0.9998 & 1 & 1 &0.9973&1&1&1&1&1\\
		$RLRT$    & (50,150) & 0.9998  & 1 & 1 &0.9988&1&1&1&1&1\\
		&(100,300)   & 1  & 1 & 1 &1&1&1&1&1&1\\
		&(200,600) & 1 & 1 & 1 &1&1&1&1&1&1\\
		\hline
	\end{tabular}
\end{table*}

\begin{table*}[htbp]
	\caption{Empirical probability of rejecting $ H_{2} $ at significance level $ \xi=0.05 $ under assumptions of Gaussian, Gamma, and Uniform distributions }
	\label{tablepowerd2h2l1}
	\begin{tabular}{@{}lrrrrrrrrrr@{}}
		\hline &&\multicolumn{3}{c}{($ Dt_{1},H_2$)} & \multicolumn{3}{c}{($ Dt_{2},H_2$)} & \multicolumn{3}{c}{($ Dt_{3},H_2 $)}\\
		\cline{3-11}
		test & \multicolumn{1}{c}{
			(p,n) } 
		&\multicolumn{1}{c}{$\alpha_1=3$} & \multicolumn{1}{c}{$\alpha_1=5$} & \multicolumn{1}{c}{$\alpha_1=7$} &
		\multicolumn{1}{c}{$\alpha_1=3$} & \multicolumn{1}{c}{$\alpha_1=5$} & \multicolumn{1}{c}{$\alpha_1=7$} &
		\multicolumn{1}{c}{$\alpha_1=3$} & \multicolumn{1}{c}{$\alpha_1=5$} & \multicolumn{1}{c}{$\alpha_1=7$}
		\\
		\hline
		$CLRT$  & (50,150)  &  0.9811 & 1 & 1&0.9633&1&1&0.9899&1&1  \\
		&(100,300) & 0.9894  & 1 & 1&0.9835&1&1&0.9929&1&1 \\
		&(200,600)  & 0.9928  & 1 & 1& 0.9894&1&1&0.9948&1&1\\
		$CNTT$   & (50,150)  & 1  & 1 & 1 & 1  & 1 & 1& 1  & 1 & 1\\
		&(100,300)   &  1 & 1 & 1 & 1  & 1 & 1& 1  & 1 & 1\\
		&(200,600)   & 1 & 1 & 1 & 1  & 1 & 1& 1  & 1 & 1\\
		$RLRT$    & (50,150) & 1  & 1 & 1 & 1  & 1 & 1& 1  & 1 & 1\\
		&(100,300)   & 1  & 1 & 1 & 1  & 1 & 1& 1  & 1 & 1\\
		&(200,600) & 1 & 1 & 1 & 1  & 1 & 1& 1  & 1 & 1\\
		\hline
	\end{tabular}
\end{table*}

\begin{table*}[htbp]
	\caption{Empirical probability of rejecting $ H_{3} $ at significance level $ \xi=1\times10^{-4} $ under assumptions of Gaussian, Gamma, and Uniform distributions  }
	\label{tablepowerd3h3l3}
	\begin{tabular}{@{}lrrrrrrrrrr@{}}
		\hline &&\multicolumn{3}{c}{($ Dt_{1},H_3 $)} & \multicolumn{3}{c}{($ Dt_{2},H_3 $)} & \multicolumn{3}{c}{($ Dt_{3},H_3 $)}\\
		\cline{3-11} &&
		\multicolumn{9}{c}{ $ \alpha_1 $}\\
		\cline{3-11}
		test & \multicolumn{1}{c}{
			(p,n) } 
		&\multicolumn{1}{c}{$2.2$} & \multicolumn{1}{c}{$2.5$} & \multicolumn{1}{c}{$2.8$} &
		\multicolumn{1}{c}{$2.2$} & \multicolumn{1}{c}{$2.5$} & \multicolumn{1}{c}{$2.8$} &
		\multicolumn{1}{c}{$2.2$} & \multicolumn{1}{c}{$2.5$} & \multicolumn{1}{c}{$2.8$}
		\\
		\hline
		$CLRT$  & (50,150)  &  0.3911 & 0.8636 & 0.9918 &0.3883&0.8232&0.9834&0.3825&0.8945&0.9976 \\
		&(100,300) & 0.3876  & 0.8946 & 0.9985&0.3804&0.8705&0.9951&0.3861&0.9110&0.9985 \\
		&(200,600)  & 0.3779  & 0.9032 & 0.9980&0.3846&0.8946&0.9976 &0.3825&0.9134&0.9997\\
		$CNTT$   & (50,150)  & 0.9972  & 1 & 1 &0.9972&0.9998&1&0.9999&1&1\\
		&(100,300)   &  0.9997 & 1 & 1 &0.9917&1&1&1&1&1\\
		&(200,600)   & 1 & 1 & 1 &0.9964&1&1&1&1&1\\
		$RLRT$    & (50,150) & 0.8933  & 0.9968 & 1 &0.8954&0.9943&0.9999&0.8909&0.9994&1\\
		&(100,300)   & 0.9821  & 1 & 1 &0.9756&0.9999&1&0.9904&1&1\\
		&(200,600) & 0.9996 & 1 & 1 &0.9989&1&1&1&1&1\\
		\hline
	\end{tabular}
\end{table*}}}

\section{Technical proofs}\label{section 6}
In this section, we present some lemmas that are needed in the proofs of the main results. The truncation and renormalization are postponed to the end of this paper. 

\subsection{Some primary definitions and lemmas}
In this section, we provide some useful results that are used later in the proofs of Theorems \ref{thm1} and  \ref{thm2}. For the population covariance matrix $\bSi=\bbT\bbT^{\ast}$, we consider the corresponding sample covariance matrix $ \bbB=\bbT\bbS_{x}\bbT^{\ast} $, where $ \bbS_{x}=\frac{1}{n}\bbX\bbX^{\ast}  $. 
By singular value decomposition of $ \bbT $ (see \eqref{decT}),
\begin{equation*}
	\bbB=\bbV\left(\begin{array}{cc}
		\bbD_{1}^\frac{1}{2}\bbU_{1}^{\ast}\bbS_{x}\bbU_{1}\bbD_{1}^\frac{1}{2},  &\bbD_{1}^\frac{1}{2}\bbU_{1}^{\ast}\bbS_{x}\bbU_{2}\bbD_{2}^\frac{1}{2}\\
		\bbD_{2}^\frac{1}{2}\bbU_{2}^{\ast}\bbS_{x}\bbU_{1}\bbD_{1}^\frac{1}{2},	&\bbD_{2}^\frac{1}{2}\bbU_{2}^{\ast}\bbS_{x}\bbU_{2}\bbD_{2}^\frac{1}{2}
	\end{array} \right)\bbV^{\ast}.
\end{equation*}
Note that
\begin{equation*}
	\bbS=\left(\begin{array}{cc}
		\bbD_{1}^\frac{1}{2}\bbU_{1}^{\ast}\bbS_{x}\bbU_{1}\bbD_{1}^\frac{1}{2},  &\bbD_{1}^\frac{1}{2}\bbU_{1}^{\ast}\bbS_{x}\bbU_{2}\bbD_{2}^\frac{1}{2}\\
		\bbD_{2}^\frac{1}{2}\bbU_{2}^{\ast}\bbS_{x}\bbU_{1}\bbD_{1}^\frac{1}{2},	&\bbD_{2}^\frac{1}{2}\bbU_{2}^{\ast}\bbS_{x}\bbU_{2}\bbD_{2}^\frac{1}{2}
	\end{array} \right)	\triangleq\left(\begin{array}{cc}\bbS_{11},&\bbS_{12}\\\bbS_{21},&\bbS_{22}\end{array} \right). 
\end{equation*}
Moreover, $ \bbB $ and $ \bbS $ have the same eigenvalues.

Recall that $\underline{\bbB}=\frac{1}{n}\bbX^{\ast}\bbT^{\ast}\bbT\bbX $ (the spectrum of which differs from that of $ \bbB $ by $ \left|  n-p\right|  $ zeros). Its LSD is $ \underline{F}^{c,H} $, $\underline{F}^{c,H}\equiv\left(1-c \right)\mathbbm{1}_{\left[0,\infty \right) }+cF^{c,H}   $, and its Stieltjes transform is $ \underline{m}\left(z \right)  $.
Let $ \widetilde{\bgl}_{j} $ be the eigenvalues of $ \bbS_{22} $ so that the LSS of $ \bbS_{22} $ is $ \sum_{j=1}^{p-M}f( \widetilde{\bgl}_{j}) $. Correspondingly, recall that $ c_{nM}:=\frac{p-M}{n} $, $ H_{2n}:=F^{\bbD_{2}} $ and $ m_{2n0}:=m_{2n0}(z) $ is the 
Stieltjes transform
 of $ F^{c_{nM},H_{2n}} $. 
 First, in Lemma \ref{lemma1} we derive that the difference between the two centers is $ 0 $.

\begin{lemma}\label{lemma1}
	Under Assumptions \ref{ass1}--\ref{ass4},  $$ \left( p-M\right)\int f\left(x \right)dF^{c_{nM},H_{2n}} =p\int f\left( x\right)dF^{c_{n},H_{n}}\left( x\right). $$
	
\end{lemma}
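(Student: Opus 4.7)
The plan rests on identifying the two deterministic distributions through their Stieltjes transforms. The starting observation is structural: since $\bGma = \bbV_{2}\bbD_{2}^{1/2}\bbU_{2}^{*}$ has rank $p-M$, the matrix $\bGma\bGma^{*} = \bbV_{2}\bbD_{2}\bbV_{2}^{*}$ carries exactly $M$ zero eigenvalues, so $H_{n}$ decomposes cleanly as the mixture
$H_{n} = (M/p)\delta_{0} + ((p-M)/p)H_{2n}.$

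I would then plug this decomposition into the Marchenko-Pastur fixed-point equation
$\underline{m}(z) = -\bigl(z - c_{n}\!\int t/(1+t\underline{m}(z))\, dH_{n}(t)\bigr)^{-1}$
satisfied by $\underline{m}_{F^{c_{n},H_{n}}}$. The atom at $t=0$ contributes nothing because the integrand vanishes there, and the remaining integral absorbs the prefactor as $c_{n}\cdot(p-M)/p = c_{nM}$. Hence $\underline{m}_{F^{c_{n},H_{n}}}$ satisfies the very same equation as $\underline{m}_{F^{c_{nM},H_{2n}}}$; by the standard uniqueness of the solution in $\mathbb{C}^{+}$, the two Stieltjes transforms coincide. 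Passing from $\underline{m}$ to $m$ via $\underline{m}(z) = -(1-c)/z + c\,m(z)$ and multiplying each identity by the appropriate dimension factor ($p$ and $p-M$ respectively), subtraction yields the clean intermediate identity
$p\,m_{F^{c_{n},H_{n}}}(z) - (p-M)\,m_{F^{c_{nM},H_{2n}}}(z) = -M/z.$

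To convert this into a statement about the linear functional on $f$, I would invoke the Cauchy contour representation $\int f\, dF = -\frac{1}{2\pi i}\oint_{\mathcal{C}} f(z)\,m_{F}(z)\, dz$, valid for $f$ analytic in an open region containing the contour and the support. Multiplying the Stieltjes identity by $-f(z)/(2\pi i)$ and integrating along $\mathcal{C}$ gives
$p\!\int f\, dF^{c_{n},H_{n}} - (p-M)\!\int f\, dF^{c_{nM},H_{2n}} = \tfrac{M}{2\pi i}\oint_{\mathcal{C}} f(z)/z\, dz,$
and the right-hand side vanishes because $f(z)/z$ is analytic inside $\mathcal{C}$.

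The main subtle point, and the one I expect to be the real obstacle, is the handling of the spurious mass at $0$ in $F^{c_{n},H_{n}}$ produced by the rank deficiency of $\bGma\bGma^{*}$: the contour $\mathcal{C}$ must be understood to enclose only the genuine bulk support shared by both distributions, with $0$ excluded. This is in keeping with the contour conventions implicit in the definition of the normalized LSS $Y_{l}$ (in particular the third, logarithmic-derivative correction term), and it is exactly this choice that prevents an unwanted residue of $Mf(0)$ from appearing at the origin and delivers the stated equality.
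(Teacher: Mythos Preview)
Your proof follows essentially the same route as the paper's: both reduce the claim to $\underline{m}_{1n0}=\underline{m}_{2n0}$ by observing that the atom of mass $M/p$ that $H_{n}$ carries at the origin contributes nothing to the integral $\int t/(1+t\underline{m})\,dH_{n}(t)$, so the two Silverstein fixed-point equations coincide and uniqueness finishes the identification. The paper then concludes directly from $n\underline{m}_{1n0}=n\underline{m}_{2n0}$ under the contour integral, whereas you first pass to $pm_{1n0}-(p-M)m_{2n0}=-M/z$ and integrate that; these are the same computation.

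Your instinct about the origin is correct, and the paper glosses over exactly the same point. From $\underline{F}^{c_{n},H_{n}}=\underline{F}^{c_{nM},H_{2n}}$ one gets the exact measure identity $F^{c_{n},H_{n}}=(M/p)\,\delta_{0}+((p-M)/p)\,F^{c_{nM},H_{2n}}$, so the literal relation is
\[
p\int f\,dF^{c_{n},H_{n}}=(p-M)\int f\,dF^{c_{nM},H_{2n}}+Mf(0).
\]
Your proposed fix (exclude $0$ from $\mathcal{C}$) makes the residue $\tfrac{M}{2\pi i}\oint f(z)/z\,dz$ vanish, but then the Cauchy representation of $p\int f\,dF^{c_{n},H_{n}}$ no longer captures the atom at $0$, so you cannot have both at once. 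The paper's displayed step $-\tfrac{p}{2\pi i}\oint f\,m_{1n0}=-\tfrac{n}{2\pi i}\oint f\,\underline{m}_{1n0}$ hides the same $Mf(0)$. In practice this is harmless: the paper's implicit convention is that $\mathcal{C}$ wraps only the bulk support (this is forced anyway for $f_{L}=x-\log x-1$), and the stray constant $Mf(0)$ would in any case be absorbed into the deterministic centering without affecting the CLT.
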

\begin{proof}
	By the Cauchy integral formula, 
	$$ p\int f(x)dF^{c_{n},H_{n}}=-\frac{p}{2 \pi i}\oint_\mathcal{C}f(z)m_{1n0}dz=-\frac{n}{2 \pi i}\oint_\mathcal{C}f(z)\underline{m}_{1n0}dz, $$ $$ (p-M)\int f(x)dF^{c_{nM},H_{2n}}=-\frac{p-M}{2 \pi i}\oint_\mathcal{C}f(z)m_{2n0}dz=-\frac{n}{2 \pi i}\oint_\mathcal{C}f(z)\underline{m}_{2n0}dz, $$
	where $\underline{m}_{1n0} $ and $ \underline{m}_{2n0}$ are the Stieltjes transforms of $\underline{F}^{c_{n},H_{n}} $ and $ \underline{F}^{c_{nM},H_{2n}}$, respectively.
	Then,  $$ (p-M)\int f(x)dF^{c_{nM},H_{2n}}-p\int f(x)dF^{c_{n},H_{n}}=\frac{n}{2 \pi i}\oint_\mathcal{C}f(z)\left(\underline{m}_{1n0}-\underline{m}_{2n0}\right) dz. $$
	Next, we prove that $\underline{m}_{1n0}=\underline{m}_{2n0}$.
	
	Note that
	$ m_{1n0} $ and $ m_{2n0} $ are the unique solutions to
	\begin{align}
		z=-\frac{1}{\underline{m}_{1n0}}+c_{n}\int\frac{tdH_{n}\left( t\right) }{1+t\underline{m}_{1n0}}\label{1}\\     
		z=-\frac{1}{\underline{m}_{2n0}}+c_{nM}\int\frac{tdH_{2n}\left( t\right) }{1+t\underline{m}_{2n0}} \label{2}, 
	\end{align}
	respectively,
	where $ \underline{m}_{1n0}=-\frac{1-c_{n}}{z}+c_{n}m_{1n0} $ and $ \underline{m}_{2n0}=-\frac{1-c_{nM}}{z}+c_{nM}m_{2n0} $. 
	Since $$  H_{n}(t)=\frac{1}{p}\left[\sum_{i=1}^{M}\mathbbm{1}_{\left\lbrace 0\leq t\right\rbrace }+\sum_{i=M+1}^{p}\mathbbm{1}_{\left\lbrace \alpha_{i}\leq t\right\rbrace } \right]=\frac{M}{p}+\frac{1}{p}\sum_{i=M+1}^{p}\mathbbm{1}_{\left\lbrace \alpha_{i}\leq t\right\rbrace }  $$ and $ H_{2n}(t)=\frac{1}{p-M}\sum_{i=M+1}^{p}\mathbbm{1}_{\left\lbrace\alpha_{i}\leq t \right\rbrace } $,
	  (\ref{1}) can be written as
	\begin{align}
		\nonumber z&=-\frac{1}{\underline{m}_{1n0}}+\frac{p}{n}\int\frac{td\left( \frac{M}{p}+\frac{1}{p}\sum_{i=M+1}^{p}\mathbbm{1}_{\left\lbrace \alpha_{i}\leq t\right\rbrace }\left( t\right)\right)  }{1+t\underline{m}_{1n0}}=-\frac{1}{\underline{m}_{1n0}}+\frac{p}{n}\int\frac{td\left( \frac{1}{p}\sum_{i=M+1}^{p}\mathbbm{1}_{\left\lbrace \alpha_{i}\leq t\right\rbrace }\left( t\right)\right)  }{1+t\underline{m}_{1n0}}\\
		&=-\frac{1}{\underline{m}_{1n0}}+\frac{1}{n}\sum_{i=M+1}^{p}\frac{\alpha_{i}}{1+\al_{i}\underline{m}_{1n0}}.\label{3}
	\end{align}	
	Similarly, equation (\ref{2}) can be written as 
$
		z=-\frac{1}{\underline{m}_{2n0}}+\frac{1}{n}\sum_{i=M+1}^{p}\frac{\al_{i}}{1+\al_{i}\underline{m}_{2n0}}.\label{4}
$
Thus, according to the fact that $ m_{1n0} $ and $ m_{2n0} $ are the unique solutions of \eqref{3} and \eqref{4}, respectively, we have $ m_{1n0} = m_{2n0} $, which completes the proof of this lemma.

\end{proof}

\textcolor{black}{Note that for the bounded part of the LSS, $ \sum_{j=M+1}^{p}f\left(\lambda_{j} \right)  $, the BST cannot be used directly since it is not an LSS of a sample covariance matrix. In fact, it approximates the LSS of $ \bbS_{22} $, that is $ \sum_{j=1}^{p-M} f\left( \tilde{\lambda}_{j}\right)  $,  but they are not equal since the off-diagonal blocks of the sample covariance matrix are not null. The following lemma measures the difference between $ \sum_{j=M+1}^{p}f\left(\lambda_{j} \right)  $ and $ \sum_{j=1}^{p-M} f\left( \tilde{\lambda}_{j}\right)  $.  }\begin{lemma}\label{lemma2} \textit{Under Assumptions \ref{ass1}--\ref{ass4}}, 
$$\sum_{j=M+1}^{p}f\left( \lambda_{j}\right)   - \sum_{j=1}^{p-M}f\left( \widetilde{\bgl}_{j}\right)- \frac{M}{2\pi i}\oint_\mathcal{C}f\left(z \right)\frac{\underline{m}_{2n0}^{'}(z)}{\underline{m}_{2n0}(z)}dz=o_p(1).$$
\end{lemma}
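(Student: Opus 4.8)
The plan is to turn the difference of the two linear spectral statistics into a single contour integral, collapse it via a Schur complement to the logarithmic derivative of an $M\times M$ determinant, and then show that this determinant concentrates. For the contour representation, by exact separation of the diverging spikes ($\lambda_1,\dots,\lambda_M$ grow like $\phi_n(\alpha_k)\to\infty$, hence leave every fixed compact set with probability tending to $1$), the classical ``no eigenvalues outside the support'' estimate, and Lemma~\ref{lemma1} (which forces $F^{\bbS_{22}}$ and the non-spiked part of $F^{\bbB}$ to share a limit), on an event of probability tending to $1$ the eigenvalues $\lambda_{M+1},\dots,\lambda_p$ together with all eigenvalues $\widetilde{\lambda}_1,\dots,\widetilde{\lambda}_{p-M}$ of $\bbS_{22}$ lie strictly inside $\mathcal{C}$, while $\lambda_1,\dots,\lambda_M$ lie outside; there Cauchy's formula gives
$$\sum_{j=M+1}^{p}f(\lambda_j)-\sum_{j=1}^{p-M}f(\widetilde{\lambda}_j)=-\frac{1}{2\pi i}\oint_{\mathcal{C}}f(z)\Big[\mathrm{tr}(\bbB-z\bbI_p)^{-1}-\mathrm{tr}(\bbS_{22}-z\bbI_{p-M})^{-1}\Big]dz.$$

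Since $\bbB$ and $\bbS$ share eigenvalues, the block--determinant identity $\det(\bbS-z\bbI_p)=\det(\bbS_{22}-z\bbI_{p-M})\det\mathbf{M}(z)$, with $\mathbf{M}(z)=\bbS_{11}-z\bbI_M-\bbS_{12}(\bbS_{22}-z\bbI)^{-1}\bbS_{21}$, yields $\mathrm{tr}(\bbB-z)^{-1}-\mathrm{tr}(\bbS_{22}-z)^{-1}=-\frac{d}{dz}\log\det\mathbf{M}(z)$, so after one integration by parts along $\mathcal{C}$ the left-hand side equals $-\frac{1}{2\pi i}\oint_{\mathcal{C}}f'(z)\log\det\mathbf{M}(z)\,dz$. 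The boundary term vanishes because $\det\mathbf{M}(z)=\prod_j(\lambda_j-z)/\prod_j(\widetilde{\lambda}_j-z)$ has equally many zeros as poles inside $\mathcal{C}$; the adjustment when $c\ge1$, where $\bbB$ and $\bbS_{22}$ carry $M$ extra common null eigenvalues and $\mathbf{M}(z)$ gains a factor $(-z)^M$, is routine bookkeeping.

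Next, the push--through identity $\bbF(\bbF^{\ast}\bbF-w)^{-1}\bbF^{\ast}=\bbF\bbF^{\ast}(\bbF\bbF^{\ast}-w)^{-1}$ applied with $\bbF=\bbX^{\ast}\bbU_2\bbD_2^{1/2}$ gives $\bbS_x-\bbS_x\bbU_2\bbD_2^{1/2}(\bbS_{22}-z)^{-1}\bbD_2^{1/2}\bbU_2^{\ast}\bbS_x=-\frac{z}{n}\bbX\mathbf{R}(z)\bbX^{\ast}$, where $\mathbf{R}(z)=(\underline{\bbB}_{22}-z\bbI_n)^{-1}$ and $\underline{\bbB}_{22}=n^{-1}\bbX^{\ast}\bbU_2\bbD_2\bbU_2^{\ast}\bbX$ is the matrix whose LSD has Stieltjes transform $\underline{m}_{2n0}$. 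Consequently, with $\mathbf{W}:=\bbU_1^{\ast}\bbX$,
$$\mathbf{M}(z)=-z\,\bbD_1^{1/2}\Big(\tfrac{1}{n}\mathbf{W}\mathbf{R}(z)\mathbf{W}^{\ast}+\bbD_1^{-1}\Big)\bbD_1^{1/2},$$
hence $\log\det\mathbf{M}(z)=M\log(-z)+\log\det\bbD_1+\log\det\big(\tfrac{1}{n}\mathbf{W}\mathbf{R}(z)\mathbf{W}^{\ast}+\bbD_1^{-1}\big)$. Against $\oint_{\mathcal{C}}f'(z)(\cdot)\,dz$ the $z$-free term $\log\det\bbD_1$ disappears, and so does $M\log(-z)$ (analytic inside $\mathcal{C}$ when $0\notin\mathrm{int}\,\mathcal{C}$, i.e.\ for $c<1$; for $c\ge1$ its extra winding cancels that of the third term and reproduces the $M f(0)$ from the null-eigenvalue count of the previous step).

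It remains to prove the uniform bilinear-form limit $\sup_{z\in\mathcal{C}}\big\|\tfrac{1}{n}\mathbf{W}\mathbf{R}(z)\mathbf{W}^{\ast}-\underline{m}_{2n0}(z)\bbI_M\big\|\stackrel{P}{\longrightarrow}0$. Here $\mathbf{R}(z)$ depends on $\bbX$ only through $\bbU_2^{\ast}\bbX$ while $\mathbf{W}=\bbU_1^{\ast}\bbX$ involves the complementary rotation, and $\mathbb{E}[n^{-1}\mathbf{W}\mathbf{B}\mathbf{W}^{\ast}]=n^{-1}\mathrm{tr}(\mathbf{B})\bbI_M$ for deterministic $\mathbf{B}$; combined with the Marchenko--Pastur convergence $n^{-1}\mathrm{tr}\,\mathbf{R}(z)\to\underline{m}_{2n0}(z)$ for $\underline{\bbB}_{22}$ and a martingale / leave-one-out control of the fluctuations (available after the truncation--renormalization step deferred to the end of the paper), the limit holds pointwise, and uniformly on $\mathcal{C}$ by Vitali's theorem since the left side is analytic and locally bounded off the spectrum. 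Since $M$ is fixed, $\|\bbD_1^{-1}\|=\alpha_K^{-1}\to0$, and $\underline{m}_{2n0}$ is bounded away from $0$ on $\mathcal{C}$, this gives $\sup_{z\in\mathcal{C}}\big|\log\det(\tfrac{1}{n}\mathbf{W}\mathbf{R}(z)\mathbf{W}^{\ast}+\bbD_1^{-1})-M\log\underline{m}_{2n0}(z)\big|\stackrel{P}{\longrightarrow}0$; multiplying by the bounded $f'$, integrating over the finite-length $\mathcal{C}$, and undoing the first integration by parts identifies the limit as $-\frac{M}{2\pi i}\oint_{\mathcal{C}}f'(z)\log\underline{m}_{2n0}(z)\,dz=\frac{M}{2\pi i}\oint_{\mathcal{C}}f(z)\frac{\underline{m}_{2n0}'(z)}{\underline{m}_{2n0}(z)}\,dz$, which is the claim. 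The main obstacle is precisely this concentration step: showing that the $M\times M$ bilinear form $n^{-1}\bbU_1^{\ast}\bbX(n^{-1}\bbX^{\ast}\bbU_2\bbD_2\bbU_2^{\ast}\bbX-z\bbI_n)^{-1}\bbX^{\ast}\bbU_1$ converges in probability, uniformly in $z\in\mathcal{C}$, to $\underline{m}_{2n0}(z)\bbI_M$ without Gaussianity and with $\bbU_1^{\ast}\bbX$ and the resolvent only uncorrelated, not independent.
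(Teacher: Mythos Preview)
Your proposal is correct and rests on the same core ingredients as the paper's proof: the block Schur complement of $\bbS$, the push-through identity to pass from $\bbS_{22}$ to $\underline{\bbB}_{22}=n^{-1}\bbX^{\ast}\bbU_2\bbD_2\bbU_2^{\ast}\bbX$, and the bilinear-form concentration $n^{-1}\bbU_1^{\ast}\bbX(\underline{\bbB}_{22}-z)^{-1}\bbX^{\ast}\bbU_1=\underline m_{2n}(z)\bbI_M+O_p(n^{-1/2})$. The paper does not prove this last step either; it is quoted as equation~(\ref{6}) from Theorem~3.1 of \cite{JiangB21G}, so the ``main obstacle'' you single out is already available in the literature and you may simply cite it.

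The packaging differs slightly. The paper writes $\mathrm{tr}(\bbB-z)^{-1}-\mathrm{tr}(\bbS_{22}-z)^{-1}=T_1-T_2$ with $T_1=\mathrm{tr}\,\mathbf{M}(z)^{-1}$ and $T_2=-\mathrm{tr}[\mathbf{M}(z)^{-1}\bbS_{12}(\bbS_{22}-z)^{-2}\bbS_{21}]$, and then analyzes $T_1$ and $T_2$ separately; in particular getting $T_2$ requires a second computation, equation~(\ref{8}), of $\bbD_1^{-1/2}\bbS_{12}(\bbS_{22}-z)^{-2}\bbS_{21}\bbD_1^{-1/2}$. Your log-determinant route is more economical: after one integration by parts you only need the limit of $\log\det\mathbf{M}(z)$ itself, not of its $z$-derivative, so the single concentration statement suffices and the separate $T_2$ calculation is bypassed. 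Both arguments then finish with the same observation that $\oint_{\mathcal C}f(z)z^{-1}dz=0$ when $0\notin\mathrm{int}\,\mathcal C$ to drop the $M\log(-z)$ (respectively $M/z$) contribution. Your handling of the winding numbers (equally many zeros and poles of $\det\mathbf{M}(z)$ inside $\mathcal C$) is the one point the paper does not need to make explicit, and you have it right.
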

\begin{proof}
Note that
$
		L_{1}:=\sum_{j=M+1}^{p}f\left( \lambda_{j}\right).
$
	By the Cauchy integral formula, we have
$
	L_{1}=-\frac{p}{2\pi i}\oint_{\mathcal C}f\left(z \right)m_{n}\left(z \right)dz,
$
	where $m_{n}=\frac{1}{p}\mathrm{tr}\left(\bbS-z\bbI_{p} \right)^{-1}=\frac{1}{p}\mathrm{tr}\left(\bbB-z\bbI_{p} \right)^{-1} $. Analogously, we have
$
		L_{2}:=\sum_{j=1}^{p-M}f\left( \widetilde{\bgl}_{j}\right)=-\frac{p-M}{2\pi i}\oint_{\mathcal C}f\left(z \right)m_{2n}\left(z \right)dz,	
$
	where $m_{2n}=\frac{1}{p-M}\mathrm{tr}\left(\bbS_{22}-z\bbI_{p-M} \right)^{-1}$. By applying the block matrix inversion formula to $m_n$, we can obtain
	\begin{align}\label{L12}
		L_{1}-L_{2}=-\frac{1}{2\pi i}\oint_{\mathcal C}f\left(z \right)\left(T_{1}-T_{2} \right) dz,
	\end{align}
	where 
	\begin{align*}
		T_{1} &=\mathrm{tr}\left( \bbS_{11}-z\bbI_{M}-\bbS_{12}\left( \bbS_{22}-z\bbI_{p-M}\right)^{-1}\bbS_{21} \right)^{-1},\\ 	
		T_{2} &=-\mathrm{tr}\left[\left( \bbS_{11}-z\bbI_{M}-\bbS_{12}\left(\bbS_{22}-z\bbI_{p-M} \right)^{-1}\bbS_{21} \right)^{-1}\bbS_{12}\left(\bbS_{22}-z\bbI_{p-M} \right)^{-2}\bbS_{21}\right].   
	\end{align*}
	Note that for any matrix $\bbZ$, 
	\begin{align*}
		\bbZ\left(\bbZ^{\ast}\bbZ-\lambda \bbI \right)^{-1}\bbZ^{\ast}=\bbI+\lambda\left(\bbZ\bbZ^{\ast}-\lambda \bbI \right)^{-1},
	\end{align*}	
	which, together with the notation $\bUps_n:=\frac{1}{n}\bbD_{1}^{\frac{1}{2}}\bbU_{1}^{\ast}\bbX\left(\frac{1}{n}\bbX^{\ast}\bbU_{2}\bbD_{2}\bbU_{2}^{*}\bbX-z\bbI_{n}\right) ^{-1}  \bbX^{\ast}\bbU_{1}\bbD_{1}^{\frac{1}{2}} $, implies that
	\begin{align*}
		 T_{1}&=-z^{-1}\mathrm{tr}\left(\bbI_{M}+\bUps_n \right)^{-1}\\
		T_{2} &=z^{-1}\mathrm{tr}\left[\left(\bbI_{M}+\bUps_n \right)^{-1}\bbS_{12}\left(\bbS_{22}-z\bbI_{p-M} \right)^{-2}\bbS_{21}\right]
	\end{align*}
	$ \underline{m}_{2n}= \underline{m}_{2n}(z)$ denotes the Stieltjes transform of $ F^{\frac{1}{n}\bbX^{\ast}\bbU_{2}\bbD_{2}\bbU_{2}^{*}\bbX} $. Thus,  we have that $\underline{m}_{2n}(z)-\underline{m}(z)=o_p(1)$ for any $z\in\mathcal{C}$. 
	From Theorem 3.1 of \citep{JiangB21G}, we know that
	\begin{align} 
		\frac{1}{n}\bbU_{1}^{\ast}\bbX\left(\frac{1}{n}\bbX^{\ast}\bbU_{2}\bbD_{2}\bbU_{2}^{*}\bbX-z\bbI_{n}\right) ^{-1}\bbX^{\ast}\bbU_{1}
		=\underline{m}_{2n}\left(z \right)\bbI_{M}+O_{p}(n^{-\frac{1}{2}}).
		\label{6}
	\end{align} 
Thus, under Assumption \ref{ass3}, we find that
\begin{align} 
	\bbD_{1}^{1/2}\left(\bbI_{M}+\bUps_n \right)^{-1}\bbD_{1}^{1/2}=\frac{1}{\underline{m} \left(z \right)}\bbI_{M}+o_p(1),
	\label{7}
\end{align}
which yields
\begin{align}\label{T1op1}
	T_{1}=o_p(1).
\end{align}

It follows that
\begin{align} 
	&\bbD_{1}^{-1/2}\bbS_{12}\left(\bbS_{22}-z\bbI_{p-M} \right)^{-2}\bbS_{21}\bbD_{1}^{-1/2}\nonumber\\
	\nonumber=&\frac{1}{n}\mathrm{tr}\left[ \left(\bbS_{22}-z\bbI_{p-M} \right)^{-2}\bbS_{22}\right]\bbI_{M}+O_{p}(n^{-\frac{1}{2}}) \\
	\nonumber=&\frac{1}{n}\mathrm{tr}\left(\bbS_{22}-z\bbI_{p-M} \right)^{-1}\bbI_{M}+\frac{z}{n}\mathrm{tr}\left(\bbS_{22}-z\bbI_{p-M} \right)^{-2}\bbI_{M}+O_{p}(n^{-\frac{1}{2}})\\
	=& cm_{2n}\left(z \right)\bbI_{M}+zcm_{2n}'\left(z \right)\bbI_{M}+o_p(1)\nonumber\\
	=& cm_{2n0}\left(z \right)\bbI_{M}+zcm_{2n0}'\left(z \right)\bbI_{M}+o_p(1)\nonumber\\
	=&\underline{m}_{2n0}\left(z \right)\bbI_{M}+z\underline{m}_{2n0}'\left(z \right)\bbI_{M}+o_p(1), \label{8} 
\end{align}
where the last equality is derived from $ \underline{m}_{2n0}(z)=-\frac{1-c}{z}+cm_{2n0}\left(z \right)  $. Therefore, according to (\ref{7}) and (\ref{8}), we obtain
\begin{align*}
	T_{2}=M\frac{\underline{m}_{2n0}\left(z \right)+z\underline{m}_{2n0}'\left(z \right)}{z\underline{m}_{2n0}\left(z \right)}+o_p(1),
\end{align*}
which, together with \eqref{L12} and \eqref{T1op1}, implies that
\begin{align*}
	L_{1}-L_{2}=\frac{M}{2\pi i}\oint_{\mathcal C}f\left(z \right)\frac{\underline{m}_{2n0}(z)+z\underline{m}_{2n0}^{'}(z)}{z\underline{m}_{2n0}(z)}dz+o_p(1)=\frac{M}{2\pi i}\oint_{\mathcal C}f\left(z \right)\frac{\underline{m}_{2n0}^{'}(z)}{\underline{m}_{2n0}(z)}dz+o_p(1).
\end{align*}
Therefore, the proof of this lemma is complete.
\end{proof}

Define random vector $\boldsymbol\gamma_{k}=\left( \gamma_{kj} \right)^\top = \left( \sqrt{n} \frac{\lambda_{j}-\phi_{n}\left(\al_{k} \right) }{\phi_{n}\left(\al_{k} \right)},    j\in J_{k} \right)^\top$, where $ J_{k} $ is the indicator set of a packet of $ d_{k} $ consecutive sample eigenvalues. Then, we present the following lemma, which is borrowed from \cite{JiangB21G} and characterizes the limiting distribution of the spiked eigenvalues of the sample covariance matrix. 
\begin{lemma} (\cite{JiangB21G})\label{lemma4}
	 Under Assumptions \ref{ass1}--\ref{ass4}, \textit{ random vector} $ \boldsymbol\gamma_{k} $ \textit{converges weakly to the joint distribution of $ d_{k} $ eigenvalues of a Gaussian random matrix} $$ -\frac{1}{\theta_{k}}\left[\bgO_{\phi_{k}} \right]_{kk},   $$
	\textit{where} 
	$$\theta_{k}=\phi_{k}^{2}\underline{m}_{2}\left( \phi_{k}\right), ~~\underline{m}_{2}\left( \lambda\right)=\int\frac{1}{\left( \lambda-x\right) ^{2}}d\underline{F}^{c,H}\left( x\right)   $$ \textit{with} $ \underline{F}^{c,H} $ \textit{being the} LSD \textit{of matrix} $ n^{-1}\bbX^{\ast}\bbU_{2}\bbD_{2}\bbU_{2}^{*}\bbX,$ $ \phi_{k}=\al_{k}\left(1+c\int\frac{t}{\al_{k}-t}dH\left(t \right)  \right)  $.
	$ \left[\bgO_{\phi_{k}} \right]_{kk} $ \textit{is the} $ k $\textit{th diagonal block of matrix} $ \bgO_{\phi_{k}} $. \textit{The variances and covariances of the elements} $ \omega_{ij} $ \textit{of} $ \bgO_{\phi_{k}} $ \textit{are:}
	$$
	\operatorname{Cov}\left(\omega_{i_{1}, j_{1}}, \omega_{i_{2}, j_{2}}\right)=\left\{\begin{array}{cc}
		(\al_{x}+1) \theta_{k}+\beta_{x}\mathcal{U}_{ i i i i} \nu_{k}, & i_{1}=j_{1}=i_{2}=j_{2}=i \\
		\theta_{k}+\beta_{x}\mathcal{U}_{ i j i j} \nu_{k}, & i_{1}=i_{2}=i \neq j_{1}=j_{2}=j \\
		\beta_{x}\mathcal{U}_{ i_{1} j_{1} i_{2} j_{2}} \nu_{k}, & \text { other cases }
	\end{array}\right.
	$$
	\textit{where} $\beta_{x}\mathcal{U}_{ i_{1} j_{1} i_{2} j_{2}}=\sum_{t=1}^{p} \bar{u}_{t i_{1}} u_{t j_{1}} u_{t i_{2}} \bar{u}_{t j_{2}} \beta_{x}$, $\boldsymbol u_{i}=\left(u_{1 i}, \ldots, u_{p i}\right)^{\top}$ \textit{are the} $i$ \textit{th column of the matrix} $\mathbf{U}_{1}$, $\nu_{k}=\phi_{k}^{2} \underline{m}^{2}\left(\phi_{k}\right)$.
\end{lemma}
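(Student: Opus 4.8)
Since Lemma~\ref{lemma4} is quoted directly from \cite{JiangB21G}, I only sketch the route. The plan is to reduce the spiked sample eigenvalues to the zeros of an $M\times M$ determinant, to localize this equation near each $\phi_n(\alpha_k)$, and then to extract the $\sqrt n$-fluctuations from a CLT for a matrix of bilinear forms in $\bbX$.

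\emph{Step 1 (a finite-dimensional eigenvalue equation).} Working with $\bbS=\left(\begin{smallmatrix}\bbS_{11}&\bbS_{12}\\\bbS_{21}&\bbS_{22}\end{smallmatrix}\right)$, which shares all eigenvalues with $\bbB$, a Schur complement shows that an eigenvalue $\lambda$ of $\bbB$ away from the support of the bulk solves $\det\left(\bbS_{11}-\lambda\bbI_M-\bbS_{12}(\bbS_{22}-\lambda\bbI_{p-M})^{-1}\bbS_{21}\right)=0$. Invoking the identity $\bbZ(\bbZ^{\ast}\bbZ-\lambda\bbI)^{-1}\bbZ^{\ast}=\bbI+\lambda(\bbZ\bbZ^{\ast}-\lambda\bbI)^{-1}$ exactly as in the proof of Lemma~\ref{lemma2}, one finds $\bbS_{11}-\bbS_{12}(\bbS_{22}-\lambda\bbI_{p-M})^{-1}\bbS_{21}=-\lambda\bbD_{1}^{1/2}\mathbf{K}_n(\lambda)\bbD_{1}^{1/2}$, where $\mathbf{K}_n(\lambda):=\tfrac1n\bbU_{1}^{\ast}\bbX(\mathbf{R}-\lambda\bbI_n)^{-1}\bbX^{\ast}\bbU_{1}$ and $\mathbf{R}:=\tfrac1n\bbX^{\ast}\bbU_{2}\bbD_{2}\bbU_{2}^{\ast}\bbX$; hence $\lambda$ must satisfy $\det\left(\bbI_M+\bbD_{1}^{1/2}\mathbf{K}_n(\lambda)\bbD_{1}^{1/2}\right)=0$.

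\emph{Step 2 (localization and rescaling).} Next I would use the first-order theory and exact-separation statement for generalized spiked models under Assumptions~\ref{ass1}--\ref{ass4} (exactly $d_k$ sample eigenvalues sit at distance $O_p(n^{-1/2})$ from $\phi_n(\alpha_k)$) to get $\mathbf{K}_n(\lambda)=\underline{m}_{2n0}(\lambda)\bbI_M+O_p(n^{-1/2})$ uniformly in a shrinking neighbourhood of each $\phi_n(\alpha_k)$. Since $\underline{m}_{1n0}=\underline{m}_{2n0}$ by Lemma~\ref{lemma1} and $\phi_n$ is built so that $\underline{m}_{1n0}(\phi_n(\alpha_k))=-\alpha_k^{-1}$, the leading equation $\prod_k\left(1+\alpha_k\underline{m}_{2n0}(\lambda)\right)^{d_k}=0$ has exactly the roots $\phi_n(\alpha_1),\dots,\phi_n(\alpha_K)$, and because the $\alpha_k$'s are mutually separated, near $\phi_n(\alpha_k)$ only the $J_k$-block survives and the equation collapses to $\det\left(\bbI_{d_k}+\alpha_k[\mathbf{K}_n(\lambda)]_{J_kJ_k}\right)=0$. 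Putting $\lambda_j=\phi_n(\alpha_k)(1+\gamma_{kj}/\sqrt n)$, Taylor-expanding $\underline{m}_{2n0}$ about $\phi_n(\alpha_k)$ (using $1+\alpha_k\underline{m}_{2n0}(\phi_n(\alpha_k))=0$), recalling $\theta_k=\phi_k^2\underline{m}_2(\phi_k)=\phi_k^2\underline{m}_{2n0}'(\phi_n(\alpha_k))+o(1)$, and multiplying through by the appropriate power of $\sqrt n$, one reaches
\[
\det\left(\theta_k\,\gamma_{kj}\,\bbI_{d_k}+\left[\bgO_n(\phi_k)\right]_{kk}+o_p(1)\right)=0,
\]
where $\bgO_n(\phi_k):=\phi_k\sqrt n\left(\mathbf{K}_n(\phi_k)-\underline{m}_{2n0}(\phi_k)\bbI_M\right)$ is the normalized, centered fluctuation (replacing $\lambda_j$ by $\phi_k$ inside it costs only $o_p(1)$), $[\,\cdot\,]_{kk}$ is its $J_k$-diagonal block, and its weak limit is the Gaussian matrix $\bgO_{\phi_k}$ of the statement.

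\emph{Step 3 (the fluctuation CLT, and conclusion).} The main obstacle is the joint CLT for the entries of $\bgO_n(\phi_k)$, i.e.\ for the normalized bilinear forms built from $\tfrac1n(\bbX^{\ast}\boldsymbol u_i)^{\ast}(\mathbf{R}-\phi_k\bbI_n)^{-1}(\bbX^{\ast}\boldsymbol u_j)$, with $\boldsymbol u_i$ the $i$-th column of $\bbU_1$. The difficulty is that $\mathbf{R}$ and the vectors $\bbX^{\ast}\boldsymbol u_i$ share the randomness of $\bbX$; because $\boldsymbol u_i$ is orthogonal to every column of $\bbU_2$, the vector $\bbX^{\ast}\boldsymbol u_i$ is only \emph{uncorrelated} with $\bbX^{\ast}\bbU_2$ and not independent of it unless the $x_{ij}$ are Gaussian. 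The route is to replace $(\mathbf{R}-\phi_k\bbI_n)^{-1}$ by a deterministic equivalent, then to run a martingale CLT along the filtration generated by the columns of $\bbX$ on the resulting quadratic and bilinear forms, with the extra terms produced by the $\bbX$-in-$\mathbf{R}$ dependence controlled by fourth-moment estimates. The limiting covariances then split into a Gaussian part giving $\theta_k=\phi_k^2\underline{m}_2(\phi_k)$ on the diagonal blocks (and $\int\!\cdots$-type cross terms), an $\alpha_x$ part contributing the additional $+1$ in the variance of the strictly diagonal entries $\omega_{ii}$ (through $\mathbb{E}x_{ij}^2$), and a $\beta_x$ part $\beta_x\mathcal{U}_{i_1j_1i_2j_2}\nu_k$ with $\nu_k=\phi_k^2\underline{m}^2(\phi_k)$, in which the fourth cumulant couples to the eigenvector overlaps $\mathcal{U}_{i_1j_1i_2j_2}=\sum_t\bar u_{ti_1}u_{tj_1}u_{ti_2}\bar u_{tj_2}$ of the columns of $\bbU_1$. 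A continuity argument (Hurwitz/Skorokhod) then turns convergence of the pencil $\theta_k\gamma\bbI_{d_k}+[\bgO_n(\phi_k)]_{kk}$ into convergence of its $d_k$ roots $\gamma_{kj}$ to the $d_k$ eigenvalues of $-\theta_k^{-1}[\bgO_{\phi_k}]_{kk}$, exact separation guaranteeing that no root is created or lost. The only feature specific to diverging spikes is that one must track $\phi_k\asymp\alpha_k\to\infty$ and $\underline{m}(\phi_k)=-\alpha_k^{-1}\to0$; but since $\theta_k$ and $\nu_k$ remain bounded (both in fact tend to $1$) and $\mathcal{U}$ is dimension-free, the argument of \cite{JiangB21G} carries over unchanged.
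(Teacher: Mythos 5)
The paper does not prove this lemma itself---it is imported verbatim from \cite{JiangB21G}---but your sketch follows essentially the same route as that reference, whose key determinant equation $0=\bigl|\,[\bgO_{M}(\phi_{k})]_{kk}+\lim\gamma_{kj}\{\phi_{k}^{2}\underline m_{2}(\phi_{k})\}\bbI_{d_{k}}\bigr|$ and fluctuation matrix $\bgO_{M}(\phi_k)$ the present paper reproduces in its proof of Lemma \ref{lemma5}. Your Schur-complement reduction, localization at $\phi_n(\alpha_k)$, rescaling, and the resulting covariance structure (including the observation that $\theta_k,\nu_k\to1$ as $\alpha_k\to\infty$) are all consistent with that argument.
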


Recall that $ \lambda_{j} $ are the eigenvalues of $ \bbB $, and $ \widetilde{\bgl}_{j} $ are the eigenvalues of $ \bbS_{22} $. The following lemma shows the asymptotic independence between $ \sum_{j=1}^{M}f\left( \lambda_{j}\right) $ and $ \sum_{j=1}^{p-M}f\left(\widetilde{\bgl}_{j} \right) $.

\begin{lemma}\label{lemma5} Under Assumptions \ref{ass1}--\ref{ass4},  $ \sum_{j=1}^{M}f\left( \lambda_{j}\right) $ and $ \sum_{j=1}^{p-M}f\left(\widetilde{\bgl}_{j} \right) $  are asymptotically independent.
\end{lemma}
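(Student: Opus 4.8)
The plan is to show that the two statistics are asymptotically independent by establishing that, jointly, the spiked eigenvalues $\lambda_1,\dots,\lambda_M$ and the bulk statistic $\sum_{j=1}^{p-M}f(\widetilde\lambda_j)$ are built from asymptotically uncorrelated (and jointly Gaussian) quantities. Since $\sum_{j=1}^{M}f(\lambda_j)$ is, by the generalized delta method and Lemma \ref{lemma4}, an asymptotically linear function of the entries $\omega_{ij}$ of the limiting Gaussian matrices $\bgO_{\phi_k}$ (i.e.\ of the random vector $\boldsymbol\gamma_k$), it suffices to prove that these spiked fluctuation vectors are asymptotically independent of the centered bulk LSS $\sum_{j=1}^{p-M}f(\widetilde\lambda_j)-\E\sum_{j=1}^{p-M}f(\widetilde\lambda_j)$. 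Because both objects will be shown to be asymptotically jointly Gaussian, asymptotic independence reduces to showing that all pairwise covariances vanish in the limit.

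First I would recall the explicit representations. By the Cauchy integral formula, $\sum_{j=1}^{p-M}f(\widetilde\lambda_j)=-\frac{1}{2\pi i}\oint_{\mathcal C}f(z)\,\mathrm{tr}(\bbS_{22}-z\bbI_{p-M})^{-1}dz$, so its fluctuation is a contour integral of $\mathrm{tr}(\bbS_{22}-z\bbI)^{-1}-\E\,\mathrm{tr}(\bbS_{22}-z\bbI)^{-1}$; note $\bbS_{22}=\frac1n\bbD_2^{1/2}\bbU_2^{\ast}\bbX\bbX^{\ast}\bbU_2\bbD_2^{1/2}$ depends on $\bbX$ only through $\bbU_2^{\ast}\bbX$. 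On the other side, the spiked eigenvalues $\lambda_k$ admit (via the standard argument behind Lemma \ref{lemma4}) a fluctuation that is governed by the entries of $\frac1n\bbD_1^{1/2}\bbU_1^{\ast}\bbX(\frac1n\bbX^{\ast}\bbU_2\bbD_2\bbU_2^{\ast}\bbX-\phi_k\bbI)^{-1}\bbX^{\ast}\bbU_1\bbD_1^{1/2}$ around its mean — i.e.\ a bilinear form in $\bbU_1^{\ast}\bbX$ with a resolvent kernel built from $\bbU_2^{\ast}\bbX$. The key structural point is that $\bbU=(\bbU_1,\bbU_2)$ is unitary, so the rows of $\bbU_1^{\ast}\bbX$ and $\bbU_2^{\ast}\bbX$ are, for Gaussian $\bbX$, exactly independent, and in general (after the truncation/renormalization step carried out at the end of the paper) they are asymptotically uncorrelated at the relevant order. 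I would thus condition on the $\sigma$-field $\mathcal F_2$ generated by $\bbU_2^{\ast}\bbX$: given $\mathcal F_2$, the bulk statistic is measurable, while the spiked fluctuations become centered linear-plus-bilinear forms in the fresh randomness $\bbU_1^{\ast}\bbX$, whose conditional law is asymptotically the Gaussian law of Lemma \ref{lemma4} \emph{uniformly} in the conditioning. Consequently the joint characteristic function factorizes asymptotically: $\E\exp\{it\,S_{\mathrm{spike}}+is\,S_{\mathrm{bulk}}\}=\E[\,e^{is\,S_{\mathrm{bulk}}}\,\E(e^{it\,S_{\mathrm{spike}}}\mid\mathcal F_2)\,]\to e^{-t^2\sigma_{\mathrm{spike}}^2/2}\,\E e^{is\,S_{\mathrm{bulk}}}$, giving asymptotic independence.

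In carrying this out the steps are: (i) write both statistics as contour integrals / functionals of the resolvents and reduce, via Lemmas \ref{lemma1}, \ref{lemma2} and \ref{lemma4} already established, to the pair $\big(\{\boldsymbol\gamma_k\}_{k=1}^K,\ \mathrm{tr}(\bbS_{22}-z\bbI)^{-1}-\E\,\mathrm{tr}(\bbS_{22}-z\bbI)^{-1}\big)$; (ii) split $\bbX$ according to the unitary rotation $\bbU=(\bbU_1,\bbU_2)$ and record the (asymptotic) independence of $\bbU_1^{\ast}\bbX$ and $\bbU_2^{\ast}\bbX$; (iii) condition on $\mathcal F_2$, verify that the conditional CLT for the spiked part from Lemma \ref{lemma4} holds with a deterministic limiting variance not depending on the conditioning, by a Lindeberg/martingale-CLT argument for the bilinear forms in $\bbU_1^{\ast}\bbX$ with the (now frozen) resolvent weights; (iv) combine through the factorization of characteristic functions, using that $S_{\mathrm{bulk}}$ is $\mathcal F_2$-measurable and bounded in probability so dominated convergence applies. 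The main obstacle I expect is step (iii): controlling the bilinear forms $\boldsymbol x_i^{\ast}\bbU_1\cdots\bbU_1^{\ast}\boldsymbol x_j$ with resolvent kernels that are themselves random functions of $\bbU_2^{\ast}\bbX$, and showing the conditional fluctuations concentrate around the \emph{same} Gaussian limit regardless of $\mathcal F_2$ — equivalently, showing that the cross-covariance between the spiked resolvent traces and $\mathrm{tr}(\bbS_{22}-z\bbI)^{-1}$ is $o(1)$ after truncation, which requires the martingale decomposition with respect to the columns of $\bbX$ together with standard resolvent bounds (rank-one perturbation, trace and quadratic-form concentration lemmas from the supplementary material). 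Once that cross term is shown negligible, asymptotic independence follows immediately.
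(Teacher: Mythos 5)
Your proposal follows essentially the same route as the paper's proof: reduce the general $f$ case to the linear one via the generalized delta method, condition on the $\sigma$-field generated by $\bbU_2^{\ast}\bbX$, and argue that the conditional limiting law of the spiked fluctuations $\boldsymbol\gamma_k$ depends only on the deterministic limit $\underline{m}_2(\phi_k)$ of the resolvent trace, so the joint characteristic function factorizes. The one step you flag as the main obstacle — justifying that $\bbU_1^{\ast}\bbX$ and $\bbU_2^{\ast}\bbX$ may be treated as independent beyond the Gaussian case — is exactly the point the paper settles by invoking the replacement result of Jiang and Bai (2021), under which $\bbU_2^{\ast}\bbX$ may be swapped for an independent copy without changing the limit of $\bgO_M(\phi_k)$.
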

\begin{proof} 
	It is sufficient to prove that for a given $ \sum_{j=1}^{p-M}f\left( \widetilde{\lambda}_{j}\right) $, the asymptotic limiting distribution of $ \sum_{j=1}^{M}f\left( \lambda_{j}\right) $ does not depend on the random part of $ \sum_{j=1}^{p-M}f\left( \widetilde{\lambda}_{j}\right) $, that is, it only depends on its limit. 
	
	First, we consider $ f(x)=x $.
	From the proof of Theorem 3.1 in \cite{JiangB21G}, \textcolor{black}{we have the following determinant equation}
	$$ 0=\left| \left[\bgO_{M}\left( \phi_{k}\right) \right] _{kk}+\mathrm{ lim}\gamma_{kj}\left\lbrace \phi_{k}^{2}\underline m_{2}(\phi_{k})\right\rbrace \bbI_{d_{k}}\right|,$$ 
	where $ \bgO_{M}\left( \phi_{k}\right) $ 
	\begin{align*}
=\frac{\phi_{k}}{\sqrt{n}}\left[ \mathrm{tr}\left\lbrace \left(  \phi_{k}\bbI-\frac{1}{n}\bbX^{*}\bbU_{2}\bbD_{2}\bbU_{2}^{*} \bbX\right) ^{-1}\right\rbrace \bbI-\bbU_{1}^{*}\bbX\left(\phi_{k}\bbI-\frac{1}{n}\bbX^{*}\bbU_{2}\bbD_{2}\bbU_{2}^{*} \bbX\right) ^{-1}\bbX^{*}\bbU_{1}\right],
\end{align*}
and
 $\underline m_{2}(\phi_{k}) $ is the limit of $ \mathrm{tr} \left(  \phi_{k}\bbI-\frac{1}{n}\bbX^{*}\bbU_{2}\bbD_{2}\bbU_{2}^{*} \bbX\right) ^{-2}$. Then, we know that $ \gamma_{kj} $ has the same asymptotic distribution with eigenvalues of
	$ -\frac{\left[\bgO_{M}\left( \phi_{k}\right) \right] _{kk}}{\phi_{k}^{2}\underline m_{2}(\phi_{k})} $ in order. \textcolor{black}{From \cite{JiangB21G}, we can obtain that the limiting distribution of $ \bm\Omega_{M}\left( \phi_{k}\right) $ does not change if $ \bbU_{2}^{*}\bbX $ is replaced by $ \bbU_{2}^{*}\bbY $ while $ \bbU_{1}^{*}\bbX $ remains unchanged. Here $ \bbY $ and $ \bbX $ are i.i.d.. Therefore  in  $\bm\Omega_{M}\left( \phi_{k}\right)$, we can assume that $  \bbU_{1}^{*} \bbX$ and $ \bbU_{2}^{*} \bbX $ are independent without loss of generality.}
 Then, given $ \bbU_{2}^{*}\bbX $, the limiting distribution of $ \gamma_{kj}  $ only depends on the limit of $\mathrm{tr}\left(  \phi_{k}\bbI-\frac{1}{n}\bbX^{*}\bbU_{2}\bbD_{2}\bbU_{2}^{*} \bbX\right) ^{-1} $, that is, $ \underline{m}_{2}(\phi_{k}) $, and has nothing to do with the random part. Therefore, it is found that $ \sum_{j=1}^{M}f\left( \lambda_{j}\right) $  and $ \sum_{j=1}^{p-M}f\left( \widetilde{\lambda}_{j}\right) $ are asymptotically independent when $ f(x)=x. $ 
	
	When $ f(x)\neq x $, 
	by using the Newton-Leibniz formula, we have 
\begin{align}
		&\nonumber\sum_{j=1}^{M}f\left(\lambda_{j}\right)-\sum_{k=1}^{K}d_{k}f\left(\phi_{n}\left(\bbalp_{k} \right) \right)
		=\nonumber\sum_{k=1}^{K}\sum_{ j\in J_{k}}(f\left(\lambda_{j}\right) - f\left(\phi_{n}\left(\bbalp_{k} \right) \right))  \\
		&=\nonumber\sum_{k=1}^{K}\sum_{j\in J_{k}}\int_{0}^{\frac{\phi_{n}\left(\bbalp_{k} \right)}{\sqrt{n}}\gamma_{kj}}f'\left(t+\phi_{n}\left(\bbalp_{k} \right) \right)dt\\
		&=\nonumber\sum_{k=1}^{K}\sum_{j\in J_{k}}\int_{0}^{1}\frac{\phi_{n}\left(\bbalp_{k} \right)}{\sqrt{n}}\gamma_{kj}\frac{f'\left(\phi_{n}\left(\bbalp_{k} \right)\left(1+\frac{\gamma_{kj}}{\sqrt{n}}s\right)  \right)}{f'\left(\phi_{n}\left(\bbalp_{k} \right)\right) }f'\left( \phi_{n}\left(\bbalp_{k} \right)\right)ds\\
		&=\nonumber\sum_{k=1}^{K}\sum_{j\in J_{k}}\int_{0}^{1}\varpi_{nk}\gamma_{kj}\frac{f'\left(\phi_{n}\left(\bbalp_{k} \right)\left(1+\frac{\gamma_{kj}}{\sqrt{n}}s\right)  \right)}{f'\left(\phi_{n}\left(\bbalp_{k} \right)\right) }ds \\
		&\rightarrow\sum_{k=1}^{K}\sum_{j\in J_{k}}\int_{0}^{1}\gamma_{kj}\varpi_{nk} ds
		=\sum_{k=1}^{K}\sum_{j\in J_{k}}\varpi_{nk}\gamma_{kj},    \label{21}
	\end{align}
	where \eqref{21} is true due to Assumption \ref{ass4}, and we denote $ \varpi_{nk}=\dfrac{\phi_{n}(\al_{k})}{\sqrt{n}}f^{\prime}(\phi_{n}(\al_{k}) )$. Thus, we convert the above equation into a function of $ \gamma_{kj} $. \textcolor{black}{The above calculations represent the underlying idea of the generalized delta method we mentioned in the Introduction.}
	Since we have proven above that given $ \sum_{j=1}^{p-M}f\left(\widetilde{\lambda}_{j}\right) $, the limiting distribution of $ \gamma_{kj} $ is concerned only with the limit of $ \sum_{j=1}^{p-M}f\left( \widetilde{\lambda}_{j}\right) $, as is $ \sum_{k=1}^{K}\sum_{j\in J_{k}}\varpi_{nk}\gamma_{kj} $, accordingly, we can conclude that $ \sum_{j=1}^{M}f\left( \lambda_{j}\right) $  and $ \sum_{j=1}^{p-M}f\left( \widetilde{\lambda}_{j}\right) $ are asymptotically independent. The proof is complete.
\end{proof}

In the following lemma, we derive the asymptotic distribution of the LSS generated from submatrix $ \bbS_{22} $.

\begin{lemma}\label{lemma3}
	Define $ Q_{1}=\sum_{j=1}^{p-M}f_{1}(\widetilde{\lambda}_{j}) -(p-M)\int f_{1}(x)dF^{c_{nM},H_{2n}}$; then, under Assumptions \ref{ass1}--\ref{ass4}, we have $$ \kappa_{1}^{-1}\left(Q_{1}-\mu_{1} \right)\stackrel{d}\longrightarrow N\left(0,1 \right)  $$
	with mean function
	\begin{align*}
	\mu_{1}&=-\frac{\alpha_{x}}{2 \pi i}\cdot\oint_{\mathcal{C}}f_{1}(z)\frac{  c_{nM} \int \underline{m}_{2n0}^{3}(z)t^{2}\left(1+t \underline{m}_{2n0}(z)\right)^{-3} d H_{2n}(t)}{\left(1-c_{nM} \int \frac{\underline{m}_{2n0}^{2}(z) t^{2}}{\left(1+t \underline{m}_{2n0}(z)\right)^{2}} d H_{2n}(t)\right)\left(1-\alpha_{x} c_{nM} \int \frac{\underline{m}_{2n0}^{2}(z) t^{2}}{\left(1+t \underline{m}_{2n0}(z)\right)^{2}} d H_{2n}(t)\right) }dz \\
		&-\frac{\beta_{x}}{2 \pi i} \cdot \oint_{\mathcal{C}} f_{1}(z) \frac{c_{nM} \int \underline{m}_{2n0}^{3}(z) t^{2}\left(1+t \underline{m}_{2n0}(z)\right)^{-3} d H_{2n}(t)}{1-c_{nM} \int \underline{m}_{2n0}^{2}(z) t^{2}\left(1+t \underline{m}_{2n0}(z)\right)^{-2} d H_{2n}(t)} dz,  	
	\end{align*}
	and the covariance function is   
	\begin{align*}
		\kappa_{1}^{2}=-\frac{1}{4\pi^{2}}\oint_{\mathcal{C}_{1}}\oint_{\mathcal{C}_{2}}f_{1}\left(z_{1} \right)f_{1}\left(z_{2} \right)\vartheta_{n}^{2}dz_{1}dz_{2},
	\end{align*}
where 
$\vartheta_{n}^{2}=\Theta_{0,n}(z_{1},z_{2})+\al_{x}\Theta_{1,n}(z_{1},z_{2})+\beta_{x}\Theta_{2,n}(z_{1},z_{2}),$ 
\begin{align*}
	\Theta_{0,n}(z_{1},z_{2})&=\dfrac{\underline{m}_{2n0}^{\prime}(z_{1}) \underline{m}_{2n0}^{\prime}(z_{2})  }{(\underline{m}_{2n0}(z_{1})-\underline{m}_{2n0}(z_{2}))^{2}  }-\dfrac{1}{(z_{1}-z_{2})^{2}},\\
	\Theta_{1,n}(z_{1},z_{2})&=\frac{\partial}{\partial z_{2}}\left\lbrace \dfrac{\partial \mathcal{A}_{n}(z_{1},z_{2})}{\partial z_{1}}\dfrac{1}{1-\al_{x}\mathcal{A}_{n}(z_{1},z_{2})} \right\rbrace ,\\
	\mathcal{A}_{n}(z_{1},z_{2})&=\dfrac{z_{1}z_{2}}{n}\underline{m}_{2n0}(z_{1}) \underline{m}_{2n0}(z_{2})\mathrm{tr}{\bGma^{*}\bbP_{n}(z_{1})\bGma\bGma^{\top}\bbP_{n}(z_{2})^{\top} \bar{\bGma}},\\
	\Theta_{2,n}(z_{1},z_{2})&=\dfrac{z_{1}^{2}z_{2}^{2}\underline{m}_{2n0}^{\prime}(z_{1}) \underline{m}_{2n0}^{\prime}(z_{2})}{n}\sum_{i=1}^{p}\left[ \bGma^{*}\bbP_{n}^{2}(z_{1})\bGma\right] _{ii}\left[ \bGma^{*}\bbP_{n}^{2}(z_{2})\bGma\right] _{ii},
\end{align*}
and the definitions of $ \bbP_{n} $, $ \bGma $, and $ \underline{m}_{2n0} $ are defined in Section \ref{section 3}.
\end{lemma}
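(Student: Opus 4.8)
The plan is to recognize $\bbS_{22}$ as a genuine generalized sample covariance matrix with a \emph{bounded} population spectral norm, run the Bai--Silverstein martingale CLT on it, and --- crucially --- stop short of letting $n\to\infty$ in the resulting variance, which is exactly what forces the conclusion into the self-normalized form $\kappa_{1}^{-1}(Q_{1}-\mu_{1})$. Put $\bbT_{0}=\bbD_{2}^{1/2}\bbU_{2}^{*}$, a $(p-M)\times p$ matrix of full row rank, so that $\bbS_{22}=\frac1n\bbT_{0}\bbX\bbX^{*}\bbT_{0}^{*}$ is a $(p-M)$-dimensional sample covariance matrix with sample size $n$, aspect ratio $c_{nM}$, population covariance $\bbT_{0}\bbT_{0}^{*}=\bbD_{2}$, and population spectral distribution $H_{2n}=F^{\bbD_{2}}$ (with $H_{2n}\to H$ by Assumption \ref{ass3}). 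Since $\bbD_{2}$ is uniformly bounded, $\bbS_{22}$ has no spiked eigenvalues, so after the truncation/renormalization deferred to the end of the paper, Assumptions \ref{ass1}--\ref{ass4} place us inside the framework of \cite{10.1214/aop/1078415845}, \cite{pan2008central} and \cite{10.1214/14-AOS1292}. The only features taking us outside those theorems are that $\bbT_{0}$ may be complex with $\al_{x}\ne0$ and that $\bbT_{0}^{*}\bbT_{0}=\bbU_{2}\bbD_{2}\bbU_{2}^{*}$ need not be diagonal; as in Theorem \ref{thm1}, these are precisely why the limiting variance need not exist and why the statement must be normalized by $\kappa_{1}$.

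First I would reduce $Q_{1}$ to a contour functional of the Stieltjes process. By the Cauchy integral formula and Lemma \ref{lemma1}, $Q_{1}=-\frac1{2\pi i}\oint_{\mathcal C}f_{1}(z)M_{n}(z)\,dz$ with $M_{n}(z)=(p-M)\bigl(m_{2n}(z)-m_{2n0}(z)\bigr)$, $m_{2n}(z)=\frac1{p-M}\mathrm{tr}(\bbS_{22}-z\bbI_{p-M})^{-1}$, $m_{2n0}$ the Stieltjes transform of $F^{c_{nM},H_{2n}}$, and $\mathcal C$ a contour enclosing the support of $F^{c_{nM},H_{2n}}$ while lying inside the analyticity region of $f_{1}$ (by Assumption \ref{ass4} and Lemma \ref{lemma1} this support differs from that of $F^{c_{n},H_{n}}$ by at most an atom of vanishing mass at the origin). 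Exact separation, available because $\bbD_{2}$ is bounded, ensures that with probability tending to one no eigenvalue of $\bbS_{22}$ lies outside $\mathcal C$. It then suffices to show that $M_{n}(\cdot)-\mathbb{E}M_{n}(\cdot)$ converges on $\mathcal C$ to a Gaussian analytic process and to identify the shift $\mathbb{E}M_{n}(\cdot)$: a contour integral of a Gaussian process being Gaussian, $Q_{1}-\mu_{1}$ is then asymptotically $N(0,\kappa_{1}^{2})$.

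The remaining work follows the CLT proofs in the cited papers, keeping the $\al_{x}$ and $\beta_{x}$ corrections and not passing to the $n\to\infty$ limit in the variance. For the mean, expanding $\mathbb{E}m_{2n}(z)-m_{2n0}(z)$ by the resolvent identities as in \cite{pan2008central} and integrating $f_{1}$ against the limiting integrand produces the two displayed terms of $\mu_{1}$, with the factor $\bigl(1-\al_{x}c_{nM}\int\cdots\, dH_{2n}\bigr)^{-1}$ arising from resumming the $\mathbb{E}x_{ij}^{2}$ contributions and $\bigl(1-c_{nM}\int\cdots\, dH_{2n}\bigr)^{-1}$ from the $\beta_{x}$ contribution. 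For the covariance, the martingale decomposition $m_{2n}(z)-\mathbb{E}m_{2n}(z)=\sum_{j=1}^{n}(\mathbb{E}_{j}-\mathbb{E}_{j-1})\mathrm{tr}(\bbS_{22}-z\bbI_{p-M})^{-1}$, the martingale CLT for the finite-dimensional distributions of $M_{n}$, and tightness via the usual resolvent bounds give that the finite-$n$ limit of $\mathrm{Cov}\bigl(M_{n}(z_{1}),M_{n}(z_{2})\bigr)$ equals $\vartheta_{n}^{2}=\Theta_{0,n}+\al_{x}\Theta_{1,n}+\beta_{x}\Theta_{2,n}$ (after repackaging the resolvents of $\bbS_{22}$ into the $p\times p$ objects $\bGma$ and $\bbP_{n}$ of Section \ref{section 3}): $\Theta_{0,n}$ is the classical Bai--Silverstein term; $\al_{x}\Theta_{1,n}$ comes from the $\mathbb{E}x_{ij}^{2}$ parts of the quadratic forms, whose iteration produces the geometric series $\sum_{\ell\ge1}(\al_{x}\mathcal A_{n})^{\ell}$ and hence the factor $(1-\al_{x}\mathcal A_{n})^{-1}$ with $\mathcal A_{n}$ built from $\bGma^{*}\bbP_{n}(z_{1})\bGma\bGma^{\top}\bbP_{n}(z_{2})^{\top}\bar{\bGma}$; and $\beta_{x}\Theta_{2,n}$ is the fourth-cumulant term, reducing to $\sum_{i}[\bGma^{*}\bbP_{n}^{2}(z_{1})\bGma]_{ii}[\bGma^{*}\bbP_{n}^{2}(z_{2})\bGma]_{ii}$. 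Since $\kappa_{1}^{2}=-\frac1{4\pi^{2}}\oint_{\mathcal C_{1}}\oint_{\mathcal C_{2}}f_{1}(z_{1})f_{1}(z_{2})\vartheta_{n}^{2}\,dz_{1}dz_{2}$ is $O(1)$, dividing $Q_{1}-\mu_{1}$ by $\kappa_{1}$ gives the $N(0,1)$ limit; if $\vartheta_{n}^{2}$ fails to converge one argues along subsequences at the level of characteristic functions, the limit $e^{-t^{2}/2}$ being common to all of them.

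The hard part will be the covariance identification in the non-Gaussian, non-diagonal regime: faithfully tracking the $\mathbb{E}x_{ij}^{2}$ terms through the repeated resolvent expansions to obtain the resummed factor $(1-\al_{x}\mathcal A_{n})^{-1}$ in $\Theta_{1,n}$, and isolating the fourth-cumulant contribution $\Theta_{2,n}$ in terms of $\bGma$. This is also where it becomes transparent that $\Theta_{1,n}$ and $\Theta_{2,n}$ depend on the eigenvectors underlying $\bbD_{2}$ (through $\bbP_{n}$ and $\bGma$), not merely on $H_{2n}$, so that $\vartheta_{n}^{2}$ genuinely may fail to converge and the self-normalized formulation is unavoidable. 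A secondary technical point is the exact-separation argument needed to fix one contour $\mathcal C$ valid for all sufficiently large $n$.
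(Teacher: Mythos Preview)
Your proposal is correct and follows the same underlying approach as the paper: recognize $\bbS_{22}$ as a sample covariance matrix with bounded population spectral norm and apply the Bai--Silverstein martingale CLT machinery to its Stieltjes transform process, keeping the variance in its finite-$n$ form because $\Theta_{1,n}$ and $\Theta_{2,n}$ need not converge.

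The one substantive difference is in how the deterministic variance formula $\vartheta_{n}^{2}$ is obtained. You propose to derive it from scratch by tracking the $\mathbb{E}x_{ij}^{2}$ and fourth-cumulant terms through the resolvent expansions and resumming the geometric series in $\mathcal{A}_{n}$. The paper instead proceeds in two citations: it first invokes \cite{10.1214/14-AOS1292} to get the CLT with a ``raw'' variance $(\kappa_{1}^{0})^{2}$ expressed directly in terms of the random resolvents $\bbA_{j}^{-1}$ and the conditional expectations $\mathbb{E}_{j}$, and then invokes \cite{najim2016gaussian}, who already proved that this raw variance is close (in L\'evy--Prohorov distance) to the deterministic expression $\vartheta_{n}^{2}=\Theta_{0,n}+\alpha_{x}\Theta_{1,n}+\beta_{x}\Theta_{2,n}$. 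Your route is self-contained and makes the structure of $\Theta_{1,n}$ and $\Theta_{2,n}$ transparent; the paper's route is much shorter but relies on the reader accepting that the Najim--Yao deterministic equivalent applies verbatim to the present $(p-M)$-dimensional model with population matrix $\bGma\bGma^{*}$. Either way, the subsequence argument you mention for the self-normalized limit is exactly the right way to handle the possible non-convergence of $\vartheta_{n}^{2}$.
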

\begin{proof}
	From \cite{10.1214/14-AOS1292}, we have that
under Assumptions \ref{ass1}--\ref{ass4}, the random variable  $ \left( \kappa_{1}^{0}\right) ^{-1}\left( Q_{1}-\mu_{1}\right) \stackrel{d}\longrightarrow N\left(0,1 \right), $ with mean function
\begin{align*}
	\mu_{1}&=-\frac{\alpha_{x}}{2 \pi i}\cdot\oint_{\mathcal{C}}\frac{ f_{1}(z) c_{nM} \int \underline{m}_{2n0}^{3}(z)t^{2}\left(1+t \underline{m}_{2n0}(z)\right)^{-3} d H_{2n}(t)}{\left(1-c_{nM} \int \frac{\underline{m}_{2n0}^{2}(z) t^{2}}{\left(1+t \underline{m}_{2n0}(z)\right)^{2}} d H_{2n}(t)\right)\left(1-\alpha_{x} c_{nM} \int \frac{\underline{m}_{2n0}^{2}(z) t^{2}}{\left(1+t \underline{m}_{2n0}(z)\right)^{2}} d H_{2n}(t)\right) }dz \\
	&-\frac{\beta_{x}}{2 \pi i} \cdot \oint_{\mathcal{C}} \frac{f_{1}(z) c_{nM} \int \underline{m}_{2n0}^{3}(z) t^{2}\left(1+t \underline{m}_{2n0}(z)\right)^{-3} d H_{2n}(t)}{1-c_{nM} \int \underline{m}_{2n0}^{2}(z) t^{2}\left(1+t \underline{m}_{2n0}(z)\right)^{-2} d H_{2n}(t)} dz,  	
\end{align*}
and the covariance function is  \begin{align*}
			\left( \kappa_{1}^{0}\right)^{2}=&-\frac{1}{4\pi^{2}}\oint_{\mathcal{C}_{1}}\oint_{\mathcal{C}_{2}}f_{1}\left(z_{1} \right)f_{1}\left(z_{2} \right)(\vartheta_{n}^{0})^{2}dz_{1}dz_{2}, 
		\end{align*} 
		where
		\begin{align*}
			(\vartheta_{n}^{0})^{2}=&\frac{b_{n}\left(z_{1}\right) b_{n}\left(z_{2}\right)}{n^{2}} \sum_{j=1}^{n} \operatorname{tr} \mathbb{E}_{j} \bGma\bGma^{*} \mathbf{A}_{j}^{-1}\left(z_{1}\right) \mathbb{E}_{j}\left(\bGma\bGma^{*} \mathbf{A}_{j}^{-1}\left(z_{2}\right)\right)\\&+\frac{\alpha_{x} b_{n}\left(z_{1}\right) b_{n}\left(z_{2}\right)}{n^{2}} \sum_{j=1}^{n} \operatorname{tr} \mathbb{E}_{j} \bGma^{*} \mathbf{A}_{j}^{-1}\left(z_{1}\right)\bGma
			\mathbb{E}_{j}\left(\bGma^{\top}\left(\mathbf{A}_{j}^{\top}\right)^{-1}\left(z_{2}\right)\bar{\bGma}\right)\\
			&+
			\frac{\beta_{x} b_{n}\left(z_{1}\right) b_{n}\left(z_{2}\right)}{n^{2}} \sum_{j=1}^{n} \sum_{i=1}^{p} \boldsymbol{e}_{i}^{\top}  {\bGma}^{*} \mathbf{A}_{j}^{-1}\left(z_{1}\right) {\bGma} \boldsymbol{e}_{i} \cdot \boldsymbol{e}_{i}^{\top} {\bGma}^{*} \mathbf{A}_{j}^{-1}\left(z_{2}\right) {\bGma} \boldsymbol{e}_{i},
		\end{align*}
		where $ b_{n}\left(z \right)=\frac{1}{1+n^{-1}\mathbb{E}\rtr\bGma\bGma^{*}\bbA_{j}^{-1}\left(z \right) }.$ The symbols $ \bbA_{j}, \boldsymbol{e_{i}} $ are defined in Section \ref{section 2}. 
Moreover \cite{najim2016gaussian} provided an estimation $ \vartheta_{n}^{2} $ for $ (\vartheta_{n}^{0})^{2} $ and proved that $ (\vartheta_{n}^{0})^{2} $ is close to $ \vartheta_{n}^{2} $ in the Lévy--Prohorov distance, where 
$\vartheta_{n}^{2}=\Theta_{0,n}(z_{1},z_{2})+\al_{x}\Theta_{1,n}(z_{1},z_{2})+\beta_{x}\Theta_{2,n}(z_{1},z_{2}),$ 
\begin{align*}
	\Theta_{0,n}(z_{1},z_{2})&=\dfrac{\underline{m}_{2n0}^{\prime}(z_{1}) \underline{m}_{2n0}^{\prime}(z_{2})  }{(\underline{m}_{2n0}(z_{1})-\underline{m}_{2n0}(z_{2}) )^{2} }-\dfrac{1}{(z_{1}-z_{2})^{2}},\\
	\Theta_{1,n}(z_{1},z_{2})&=\frac{\partial}{\partial z_{2}}\left\lbrace \dfrac{\partial \mathcal{A}_{n}(z_{1},z_{2})}{\partial z_{1}}\dfrac{1}{1-\al_{x}\mathcal{A}_{n}(z_{1},z_{2})} \right\rbrace ,\\
	\mathcal{A}_{n}(z_{1},z_{2})&=\dfrac{z_{1}z_{2}}{n}\underline{m}_{2n0}(z_{1}) \underline{m}_{2n0}(z_{2})\mathrm{tr}{\bGma^{*}\bbP_{n}(z_{1})\bGma\bGma^{\top}\bbP_{n}(z_{2})^{\top} \bar{\bGma}},\\
	\Theta_{2,n}(z_{1},z_{2})&=\dfrac{z_{1}^{2}z_{2}^{2}\underline{m}_{2n0}^{\prime}(z_{1}) \underline{m}_{2n0}^{\prime}(z_{2})}{n}\sum_{i=1}^{p}\left[ \bGma^{*}\bbP_{n}^{2}(z_{1})\bGma\right] _{ii}\left[ \bGma^{*}\bbP_{n}^{2}(z_{2})\bGma\right] _{ii},
\end{align*}
and the definitions of $ \bbP_{n} $, $ \bGma $, and $ \underline{m}_{2n0} $ are given in Section \ref{section 3}. Notably, if $ \bGma $ is not real, then
the convergence of $ \Theta_{1,n}(z_{1},z_{2}) $ is not guaranteed. However, if $ \bGma $ and entries $ x_{ij} $ are real, that is, $ \al_{x}=1 $, then it can be easily proven that $ \Theta_{0,n}(z_{1},z_{2})= \Theta_{1,n}(z_{1},z_{2}) $. Similarly, the convergence of $ \Theta_{2,n}(z_{1},z_{2}) $ depends on the assumption that $ \bGma^{*}\bGma $ is diagonal; thus, under Assumptions \ref{ass1}--\ref{ass4}, $ \Theta_{1,n}(z_{1},z_{2}) $ and $ \Theta_{2,n}(z_{1},z_{2}) $ may have no limits.

Therefore, the covariance term $ \left( \kappa_{1}^{0}\right) ^{2} $ is estimable, and the estimate is $ \kappa_{1}^{2} $, with $$  \kappa_{1}^{2}=  -\frac{1}{4\pi^{2}}\oint_{\mathcal{C}_{1}}\oint_{\mathcal{C}_{2}}f_{1}\left(z_{1} \right)f_{1}\left(z_{2} \right)\vartheta_{n}^{2}dz_{1}dz_{2}. $$
 Thus, the proof is complete.
\end{proof}

\subsection{ Truncation and renormalization}
In this subsection, we truncate and renormalize the random variables to ensure the existence of their higher-order moments. 

{\color{black}{Similar to \cite{JiangB21G}, we may select $ \eta_{n}\rightarrow0 $ such that $ \eta_n^{-4}n^2P(|x_{ij}|\ge \eta_n\sqrt{n})\to 0 $.}} Let $\hat{x}_{i j}=x_{i j} \mathbbm{1}_{\left\lbrace \left|x_{i j}\right|<\eta_{n} \sqrt{n}\right\rbrace} $ and $\tilde{x}_{i j}=\frac{\hat{x}_{i j}-\mathbbm{E} \hat{x}_{i j}}{ \hat\sigma_{n}}$, where $\hat\sigma_{n}^{2}=\mathbbm{E}\left|\hat{x}_{i j}-\mathbbm{E} \hat{x}_{i j}\right|^{2}$. \textcolor{black}{Analogous to the discussion in \cite{Li15}, the sequence  $ \eta_{n}\rightarrow0 $ can be made arbitrarily slow, therefore, we may require it satisfying  
	$ \eta_nn^{t}\to\infty $ for any fixed $t>0$. } Correspondingly, we define $ \hat{\bbB}=\frac{1}{n}\bbT\hat{\bbX}\hat{\bbX}^{\ast}\bbT^{\ast} $ and $ \tilde{\bbB}=\frac{1}{n}\bbT\tilde{\bbX}\tilde{\bbX}^{\ast}\bbT^{\ast} $, where $ \hat{\bbX}=(\hat{x}_{i j})$ and $ \tilde{\bbX}=(\tilde{x}_{i j})$. $ \hat{G}_{n} $ and $ \tilde{G}_{n} $  denote the analogs of $ G_{n} $ with the matrix $ \bbB $ replaced by $ \hat{\bbB} $ and $ \tilde{\bbB} $, respectively. 
\textcolor{black}{Next, we demonstrate that the limiting distribution of the LSS is unchanged when the entries of $\mathbf{X}$ are replaced by the truncated and renormalized entries.}

From Supplement B in \cite{JiangB21G}, we have 
{\color{black}{ \begin{align*}
			&\mathrm{P}(\bbB \neq \hat{\bbB})
			\leq \sum_{i, j} \mathrm{P}\left(\left|x_{i j}\right| \geq \eta_{n} \sqrt{n}\right)\leq n p \mathrm{P}\left(\left|x_{11}\right| \geq \eta_{n} \sqrt{n}\right) \rightarrow 0, \quad \text { as } n, p \rightarrow \infty.
\end{align*}}
It follows from the definition of LSS and Lemma \ref{lemma2},  for any $ l=1,\ldots,h $, 
\begin{align*}  
	& \int f_{l}(x)d\hat{G}_{n}-\int f_{l}(x)d\tilde{G}_{n}(x) =\sum_{i=1}^{p} ( f_{l}(\lambda_{i}^{\hat{\bbB}})-f_{l}(\lambda_{i}^{\tilde{\bbB}}))\\
	&=\sum_{i=1}^{M} ( f_{l}(\lambda_{i}^{\hat{\bbB}})-f_{l}(\lambda_{i}^{\tilde{\bbB}}))+ \sum_{i=M+1}^{p} ( f_{l}(\lambda_{i}^{\hat{\bbB}})-f_{l}(\lambda_{i}^{\tilde{\bbB}})) \\
	&=\sum_{i=1}^{M} ( f_{l}(\lambda_{i}^{\hat{\bbB}})-f_{l}(\lambda_{i}^{\tilde{\bbB}}))+ \sum_{i=M+1}^{p}f_{l}(\lambda_{i}^{\hat{\bbB}})-\sum_{i=1}^{p-M}f_{l}(\lambda_{i}^{\hat{\bbS}_{22}}) + \sum_{i=1}^{p-M}(f_{l}(\lambda_{i}^{\hat{\bbS}_{22}})-f_{l}(\lambda_{i}^{\tilde{\bbS}_{22}})) 
	\\
	&+\sum_{i=1}^{p-M}f_{l}(\lambda_{i}^{\tilde{\bbS}_{22}})-\sum_{i=M+1}^{p}f_{l}(\lambda_{i}^{\tilde{\bbB}}) \\
	&=\sum_{i=1}^{M} ( f_{l}(\lambda_{i}^{\hat{\bbB}})-f_{l}(\lambda_{i}^{\tilde{\bbB}}))+\sum_{i=1}^{p-M}(f_{l}(\lambda_{i}^{\hat{\bbS}_{22}})-f_{l}(\lambda_{i}^{\tilde{\bbS}_{22}})) \\
	&+\frac{M}{2\pi i}\oint_\mathcal{C}f_{l}\left(z \right)\frac{\hat{\underline{m}}_{2n0}^{'}(z)}{\hat{\underline{m}}_{2n0}(z)}dz-\frac{M}{2\pi i}\oint_\mathcal{C}f_{l}\left(z \right)\frac{\tilde{\underline{m}}_{2n0}^{'}(z)}{\tilde{\underline{m}}_{2n0}(z)}dz.
\end{align*}
Here $ \hat{\bbS}_{22} $ and $ \tilde{\bbS}_{22} $  denote the analogs of $ \bbS_{22} $ with the matrix $ \bbX $ replaced by $ \hat{\bbX} $ and $ \tilde{\bbX} $, respectively.  $ \hat{\underline{m}}_{2n0}(z) $ and $ \tilde{\underline{m}}_{2n0}(z) $  denote the analogs of $\underline{m}_{2n0}(z) $ with the matrix $ \bbX $ replaced by $ \hat{\bbX} $ and $ \tilde{\bbX} $, respectively.

Therefore, $ \left|\hat{Y}_l-\tilde{Y}_l \right|=\left|\sum_{i=1}^{M} ( f_{l}(\lambda_{i}^{\hat{\bbB}})-f_{l}(\lambda_{i}^{\tilde{\bbB}}))+ \sum_{i=1}^{p-M}(f_{l}(\lambda_{i}^{\hat{\bbS}_{22}})-f_{l}(\lambda_{i}^{\tilde{\bbS}_{22}}))\right|\leq \\\left|\sum_{i=1}^{M} ( f_{l}(\lambda_{i}^{\hat{\bbB}})-f_{l}(\lambda_{i}^{\tilde{\bbB}})) \right| +\left|\sum_{i=1}^{p-M}(f_{l}(\lambda_{i}^{\hat{\bbS}_{22}})-f_{l}(\lambda_{i}^{\tilde{\bbS}_{22}})) \right|.   $}

\textcolor{black}{
	For $ \left|\sum_{i=1}^{p-M}(f_{l}(\lambda_{i}^{\hat{\bbS}_{22}})-f_{l}(\lambda_{i}^{\tilde{\bbS}_{22}})) \right|, $ we have 
	\begin{align*} 
		\left|\sum_{i=1}^{p-M}(f_{l}(\lambda_{i}^{\hat{\bbS}_{22}})-f_{l}(\lambda_{i}^{\tilde{\bbS}_{22}})) \right| \leq \sum_{i=1}^{p-M}\left| f_{l}(\lambda_{i}^{\hat{\bbS}_{22}})-f_{l}(\lambda_{i}^{\tilde{\bbS}_{22}})\right|
		\leq C\sum_{i=1}^{p-M}\left| \lambda_{i}^{\hat{\bbS}_{22}}-\lambda_{i}^{\tilde{\bbS}_{22}}\right|.
	\end{align*}
	Similar to  Lemma 2.7 in \cite{bai99}, we have
	\begin{align*}
		&\sum_{i=1}^{p-M}\left| \lambda_{i}^{\hat{\bbS}_{22}}-\lambda_{i}^{\tilde{\bbS}_{22}}\right|\leq \left(\frac{2}{n}\mathrm{tr}(\hat{\bbS}_{22}+\tilde{\bbS}_{22})  \mathrm{tr}\bbD_{2}^{\frac{1}{2}}\bbU_{2}^{*}(\hat{\bbX}-\tilde{\bbX})(\hat{\bbX}-\tilde{\bbX})^{*} \bbU_{2}\bbD_{2}^{\frac{1}{2}}\right) ^{1/2}\\
		&= \left(\frac{2}{n}\mathrm{tr}(\hat{\bbS}_{22}+\tilde{\bbS}_{22})  \mathrm{tr}(\hat{\bbX}-\tilde{\bbX})(\hat{\bbX}-\tilde{\bbX})^{*} \bbU_{2}\bbD_{2}\bbU_2^*\right) ^{1/2}\\
		&\leq \left(\frac{2C}{n}\mathrm{tr}(\hat{\bbS}_{22}+\tilde{\bbS}_{22})  \mathrm{tr}(\hat{\bbX}-\tilde{\bbX})(\hat{\bbX}-\tilde{\bbX})^{*} \right)^{1/2},
	\end{align*}
	where the last inequality is because  $\bbU_{2}\bbD_{2}\bbU_2^*  $ is bounded. It is easy to prove $ \frac{1}{n}\mathbb{E}\mathrm{tr}(\hat{\bbS}_{22}+\tilde{\bbS}_{22})<C, $ and
	\begin{align} 
		&\nonumber\mathbb{E}\mathrm{tr}(\hat{\bbX}-\tilde{\bbX})(\hat{\bbX}-\tilde{\bbX})^{*}=\sum_{i,j}\mathbb{E}\left|  (1-\frac{1}{\hat{\sigma}_{n}})\hat{x}_{ij}+\frac{\mathbb{E}\hat{x}_{ij}}{\hat{\sigma}_{n}}\right| ^{2} \\
		&=\sum_{i,j}\left(  \Var( (1-\frac{1}{\hat{\sigma}_{n}})\hat{x}_{ij}+\frac{\mathbb{E}\hat{x}_{ij}}{\hat{\sigma}_{n}} )+\left| \mathbb{E}\hat{x}_{ij}\right|^{2}\right) .\label{truncation1}     
	\end{align}
	Since $ \Var\left( (1-\frac{1}{\hat{\sigma}_{n}})\hat{x}_{ij}+\frac{\mathbb{E}\hat{x}_{ij}}{\hat{\sigma}_{n}} \right)=(\hat{\sigma}_n-1)^2\leq \frac{(\hat{\sigma}_{n}^{2}-1)^{2}}{\hat{\sigma}_{n}^{2}+1}\leq 2\left( \mathbb{E} \left|x_{ij} \right|^{2}\mathbbm{1}_{\left\lbrace \left|x_{ij} \right|\geq \eta_{n}\sqrt{n}\right\rbrace }\right) ^{2}, $ and by the selection of $ \eta_{n} $,
	$ \mathbb{E} \left|x_{ij} \right|^{2}\mathbbm{1}_{\left\lbrace \left|x_{ij} \right|\geq \eta_{n}\sqrt{n}\right\rbrace }=o(\eta_{n}^{2}n^{-1}), $ $ \mathbb{E}\hat{x}_{ij}=o(\eta_{n}n^{-3/2}), $ then we can obtain (\ref{truncation1}) is o(1). Therefore, $ 	\sum_{i=1}^{p-M}\left| \lambda_{i}^{\hat{\bbS}_{22}}-\lambda_{i}^{\tilde{\bbS}_{22}}\right| $ is $ o_p(1) $.
}

For the first term $ \left|\sum_{i=1}^{M} ( f_{l}(\lambda_{i}^{\hat{\bbB}})-f_{l}(\lambda_{i}^{\tilde{\bbB}})) \right| $, from the arguments in Supplement B of \cite{JiangB21G}, we know that 
\begin{align} \left|\lambda_{i}^{\hat{\bbB}}-\lambda_{i}^{\tilde{\bbB}}\right|=o_{p}(n^{-\frac{1}{2} } {\rho_{i}}). \label{truncation}
\end{align} 
We recall that $ \rho_{i} = \al_{k} $ if $ i \in J_{k} $. 
Then, for brevity, we denote $ \beta_{i}=({\lambda_{i}^{\hat{\bbB}}-\lambda_{i}^{\tilde{\bbB}}})/{\rho_{i}}$ and
obtain that
\begin{align}
	&\nonumber f_l(\lambda_{i}^{\hat{\bbB}})-f_l(\lambda_{i}^{\tilde{\bbB}})	=\int_{0}^{\beta_{i}\rho_{i}}f_l^{\prime}(t+\lambda_{i}^{\tilde{\bbB}})	dt\\
	=&\int_{0}^{1}\beta_{i}\rho_{i}f_l^{\prime}(\beta_{i}\rho_{i}s+\lambda_{i}^{\tilde{\bbB}})ds
	=\beta_{i}\rho_{i}f_l^{\prime}(\rho_{i})\int_{0}^{1}\frac{f_l^{\prime}(\rho_{i}(\beta_{i}s+\frac{\lambda_{i}^{\tilde{\bbB}}}{\rho_{i}}) )}{f_l^{\prime}(\rho_{i})}ds.
\end{align} 
Since $ \beta_{i}=o_{p}(n^{-\frac{1}{2}}) $, $ \frac{ \lambda_{i}^{\hat{\bbB}}}{ \rho_{i} } $ tends to 1,
then from Assumption \ref{ass4} we obtain
\begin{align}  \sqrt{n}\sum_{i=1}^{M}\dfrac{\left| f_{l}(\lambda_{i}^{\hat{\bbB}})-f_{l}(\lambda_{i}^{\tilde{\bbB}})\right|}{f_{l}^{\prime}(\rho_{i})\rho_{i}}= o_{p}(1). \label{trun}
\end{align} 
Then we have $ \frac{\left|\sum_{i=1}^{M} ( f_{l}(\lambda_{i}^{\hat{\bbB}})-f_{l}(\lambda_{i}^{\tilde{\bbB}})) \right|}{\varsigma_{l}}\leq \frac{\sum_{i=1}^{M}\left|  f_{l}(\lambda_{i}^{\hat{\bbB}})-f_{l}(\lambda_{i}^{\tilde{\bbB}}) \right|}{\varsigma_{l}}\leq \sum_{i=1}^{M}\frac{\left| f_{l}(\lambda_{i}^{\hat{\bbB}})-f_{l}(\lambda_{i}^{\tilde{\bbB}})\right|}{f_{l}^{\prime}(\rho_{i})\rho_{i}/\sqrt{n}}= o_{p}(1). $
Thus, it is concluded that the procedure of truncation does not affect the limiting distribution of LSS.

Therefore, in the following proofs, we can safely assume that $ \left|x_{ij} \right|<\eta_{n}\sqrt{n}$. 

\begin{acks}[Acknowledgments]
The authors would like to thank Professor Jeff Yao for many helpful suggestions and discussions.  Zhidong Bai was partially supported by NSFC Grants No.12171198, No.12271536, and Team Project of Jilin Provincial
Department of Science and Technology No.20210101147JC. Jiang Hu was partially supported by NSFC Grant No.12171078.
\end{acks}

\begin{supplement}
	\stitle{Supplement to ``A CLT for the LSS of large dimensional sample covariance matrices with diverging spikes''}
	\sdescription{This supplementary document contains proofs of Theorems \ref{thm1}--\ref{RLRT power}, and some useful lemmas. We also
	report some additional simulation results in this document.}
\end{supplement}	

\newpage

	\begin{frontmatter}
		\title{Supplement to ``A CLT for the LSS of large dimensional sample covariance matrices with diverging spikes''}
		\runtitle{CLT for LSS with diverging spikes}
		
		\begin{aug}
			\author{\fnms{Zhijun} \snm{Liu}\ead[label=e1,mark]
				{liuzj037@nenu.edu.cn}},
			\author{\fnms{Jiang} \snm{Hu}\ead[label=e2,mark]{huj156@nenu.edu.cn}},
			\author{\fnms{Zhidong} \snm{Bai}\ead[label=e3,mark]{baizd@nenu.edu.cn}},
			\and 
			\author{\fnms{Haiyan}
				\snm{Song}\ead[label=e4,mark]{songhy716@nenu.edu.cn}}
			\address{KLASMOE and School of Mathematics and Statistics, Northeast Normal University, China.
				\printead{e1,e2,e3,e4}}
			
		\end{aug}
		
		
		
		\begin{keyword}[class=MSC]
			\kwd[Primary ]{	60B20}
			\kwd[; secondary ]{60F05}
		\end{keyword}
		
		\begin{keyword}
			\kwd{Empirical spectral distribution}
			\kwd{linear spectral statistic}
			\kwd{random matrix}
			\kwd{Stieltjes transform}
		\end{keyword}
	\end{frontmatter}
	
	

	In this document we present some technical details involved in \cite{Liu2022}. More precisely, in Section \ref{postpone proof}, we prove Theorems \ref{thm1}--\ref{RLRT power} of the main file.  Some derivations and calculations in Section \ref{postpone proof} are postponed to Section \ref{sectionB}. In Section \ref{some useful lemmas} we provide some useful lemmas.  Finally, in Section \ref{other simulations}, we report some additional simulation results in this part.
	
	The number of scheme(equations,theorems,lemmas,etc.) is shared with the main document so that there are no misunderstandings with the use of references.

	\section{Proofs of Theorems \ref{thm1}--\ref{RLRT power}}\label{postpone proof}
	\subsection{Proof of Theorem \ref{thm1}}
	The proof of Theorem \ref{thm1} builds on the decomposition analysis of the LSSs and it is divided into part (\uppercase\expandafter{\romannumeral1}) $ \sum_{j=1}^{M}f\left( \bgl_{j}\right) $ and part (\uppercase\expandafter{\romannumeral2}) $ \sum_{j=M+1}^{p}f\left( \bgl_{j}\right) $. Enlightened by the BST in \cite{10.1214/aop/1078415845}, we have
	\begin{align*}
		&\sum_{j=1}^{p}f\left( \bgl_{j}\right)-p\int f(x)dF^{c_{n},H_{n}} \\
		&=\sum_{j=1}^{M}f\left( \bgl_{j}\right)+\sum_{j=M+1}^{p}f\left( \bgl_{j}\right)-p\int f(x)dF^{c_{n},H_{n}}\\
		&=\sum_{j=1}^{M}f\left( \bgl_{j}\right)+\sum_{j=1}^{p-M}f\left( \widetilde{\bgl}_{j}\right)-(p-M)\int f(x)dF^{c_{nM},H_{2n}}+\sum_{j=M+1}^{p}f\left( \bgl_{j}\right)-\sum_{j=1}^{p-M}f\left( \widetilde{\bgl}_{j}\right)\\
		&+(p-M)\int f(x)dF^{c_{nM},H_{2n}}-p\int f(x)dF^{c_{n},H_{n}}.
	\end{align*}
	Since Lemma \ref{lemma1} has shown the difference between $ (p-M)\int f(x)dF^{c_{nM},H_{2n}}$ and $p\int f(x)dF^{c_{n},H_{n}}$ is 0. 
	Moreover, in Lemma \ref{lemma2} we have proved $$ \sum_{j=M+1}^{p}f\left( \bgl_{j}\right)-\sum_{j=1}^{p-M}f\left( \widetilde{\bgl}_{j}\right)=\frac{M}{2\pi i}\oint_{C}f\left(z \right)\frac{\underline{m}_{2n0}^{'}(z)}{\underline{m}_{2n0}(z)}dz+o_{p}(1). $$
	It follows that
	\begin{align*}
		&\sum_{j=1}^{p}f\left( \bgl_{j}\right)-p\int f(x)dF^{c_{n},H_{n}}\\
		=&\sum_{j=1}^{M}f\left( \bgl_{j}\right)+\sum_{j=1}^{p-M}f\left( \widetilde{\bgl}_{j}\right)
		-(p-M)\int f(x)dF^{c_{nM},H_{2n}}+\frac{M}{2\pi i}\oint_{C}f\left(z \right)\frac{\underline{m}_{2n0}^{'}(z)}{\underline{m}_{2n0}(z)}dz+ o_{p}(1),
	\end{align*}
	which yields
	\begin{align}
		&\nonumber\sum_{j=1}^{p}f\left( \bgl_{j}\right)-p\int f(x)dF^{c_{n},H_{n}}-\sum_{k=1}^{K}d_{k}f\left( \phi_{n}\left(\al_{k} \right) \right)-\frac{M}{2\pi i}\oint_{C}f\left(z \right)\frac{\underline{m}_{2n0}^{'}(z)}{\underline{m}_{2n0}(z)}dz\\
		=&\sum_{j=1}^{M}f\left( \bgl_{j}\right)-\sum_{k=1}^{K}d_{k}f\left( \phi_{n}\left(\al_{k} \right) \right)+\sum_{j=1}^{p-M}f\left( \widetilde{\bgl}_{j}\right)-(p-M)\int f(x)dF^{c_{nM},H_{2n}}+ o_{p}(1). \label{100}
	\end{align}
	
	The analysis below is executed by dividing (\ref{100}) into two parts: (\uppercase\expandafter{\romannumeral1}) $ \sum_{j=1}^{M}f\left( \bgl_{j}\right)-\sum_{k=1}^{K}d_{k}f\left( \phi_{n}\left(\al_{k} \right) \right) $ and (\uppercase\expandafter{\romannumeral2}) $ \sum_{j=1}^{p-M}f\left( \widetilde{\bgl}_{j}\right)-(p-M)\int f(x)dF^{c_{nM},H_{2n}} $, where we ignore the impact of $ o_{p}(1) $ on the asymptotic distribution. Since we have derived the asymptotic distribution of part (\uppercase\expandafter{\romannumeral2}) in Lemma \ref{lemma3}, we only need to consider the asymptotic distribution of part (\uppercase\expandafter{\romannumeral1}) $ \sum_{j=1}^{M}f\left(\lambda_{j}\right)-\sum_{k=1}^{K}d_{k}f\left(\phi_{n}\left(\bbalp_{k} \right) \right) $. From the proof of Lemma \ref{lemma5}, $ \sum_{j=1}^{M}f\left( \bgl_{j}\right)-\sum_{k=1}^{K}d_{k}f\left(\phi_{n}\left(\bbalp_{k} \right)\right)  $ has the same limiting distribution as $ \sum_{k=1}^{K}\varpi_{nk}\sum_{j\in J_{k}}\gamma_{kj}  $. From Lemma \ref{lemma4}, we have $ \left(\gamma_{kj}, j\in J_{k} \right)^{\top} $ converges weakly to the joint distribution of the $ d_{k} $ eigenvalues of Gaussian random matrix $ -\frac{1}{\theta_{k}}\left[\bgO_{\phi_{k}} \right]_{kk}  $, so $ \sum_{j\in J_{k}}\gamma_{kj}\stackrel{d}\rightarrow -\frac{1}{\theta_{k}}\mathrm {tr}\left[\bgO_{\phi_{k}} \right]_{kk}    $. Recall that $ \omega_{ij} $ is the element of $ \bgO_{\phi_{k}} $, and $ \mathrm {tr}\left[\bgO_{\phi_{k}} \right]_{kk} $ is the summation of the diagonal element, that is, $ \sum_{j\in J_{k}} \omega_{jj} $. Since the diagonal elements are i.i.d., then $ \mathbb{E}\left(\sum_{j\in J_{k}} \omega_{jj}\right) =0  $,  $ \mathrm{Var}\left(\sum_{j\in J_{k}} \omega_{jj}\right)=\sum_{j\in J_{k}}\mathrm{Var}\left(\omega_{jj} \right)+ \sum_{j_{1}\neq j_{2}}\mathrm{cov}\left(\omega_{j_{1}j_{1}}, \omega_{j_{2}j_{2}} \right)= \sum_{j\in J_{k}}( \left(\al_{x}+1 \right)\theta_{k}+\beta_{x}\mathcal{U}_{jjjj}\nu_{k}) +\sum_{j_{1}\neq j_{2}}\beta_{x}\mathcal{U}_{j_{1}j_{1}j_{2}j_{2}}\nu_{k} =\left( \al_{x}+1\right)\theta_{k}d_{k}+\sum_{j_{1},j_{2}\in J_{k}} \mathcal{U}_{j_{1}j_{1}j_{2}j_{2}}\beta_{x}\nu_{k}     $ 
	
	Therefore,
	from Lemma \ref{lemma4}, we have that the asymptotic distribution of $ \sum_{j\in J_{k}}\gamma_{kj} $ is a Gaussian distribution with $$
	\mathbb{E}\sum_{j\in J_{k}}\gamma_{kj}=0,$$ $$
	s_{k}^{2}\triangleq \mathrm{Var}\left(\sum_{j\in J_{k}}\gamma_{kj}\right)=\frac{ \left( \al_{x}+1\right)\theta_{k}d_{k}+\sum_{j_{1},j_{2}\in J_{k}} \mathcal{U}_{j_{1}j_{1}j_{2}j_{2}}\beta_{x}\nu_{k}}{\theta_{k}^{2}}, $$ and then, we directly derive that the mean function of $ \sum_{k=1}^{K}\varpi_{nk}\sum_{j\in J_{k}}\gamma_{kj}  $ is 0 and that its covariance function is
	$$
	\mathrm{Var}\left( Y_{f_{1}} \right)=\sum_{k=1}^{K}\varpi_{nk1}^{2}s_{k}^{2}. 
	$$

	Finally, we focus on the asymptotic distribution of equation (\ref{100}). Because of Lemma \ref{lemma5}, the two LSSs are asymptotically independent; thus, the random variable 
	$$ \varsigma_{1}^{-1}\left(Y_{1}-\mathbb{E}Y_{1} \right)\stackrel{d}\longrightarrow N\left(0,1 \right)  $$
	with mean function $\mathbb{E}Y_{1}=\mu_{1}$ being
	\begin{align*}
		&-\frac{\alpha_{x}}{2 \pi i}\cdot\oint_{\mathcal{C}}f_{1}(z)\frac{  c_{nM} \int \underline{m}_{2n0}^{3}(z)t^{2}\left(1+t \underline{m}_{2n0}(z)\right)^{-3} d H_{2n}(t)}{\left(1-c_{nM} \int \frac{\underline{m}_{2n0}^{2}(z) t^{2}}{\left(1+t \underline{m}_{2n0}(z)\right)^{2}} d H_{2n}(t)\right)\left(1-\alpha_{x} c_{nM} \int \frac{\underline{m}_{2n0}^{2}(z) t^{2}}{\left(1+t \underline{m}_{2n0}(z)\right)^{2}} d H_{2n}(t)\right) }dz \\
		&-\frac{\beta_{x}}{2 \pi i} \cdot \oint_{\mathcal{C}} f_{1}(z) \frac{c_{nM} \int \underline{m}_{2n0}^{3}(z) t^{2}\left(1+t \underline{m}_{2n0}(z)\right)^{-3} d H_{2n}(t)}{1-c_{nM} \int \underline{m}_{2n0}^{2}(z) t^{2}\left(1+t \underline{m}_{2n0}(z)\right)^{-2} d H_{2n}(t)} dz,  	
	\end{align*}
	and covariance function $\varsigma_{1}^{2}$ being 
	\begin{align*}
		\sum_{k=1}^{K}\varpi_{nk1}^{2}s_{k}^{2} -\frac{1}{4\pi^{2}}\oint_{\mathcal{C}_{1}}\oint_{\mathcal{C}_{2}}f_{1}\left(z_{1} \right)f_{1}\left(z_{2} \right)\vartheta_{n}^{2}dz_{1}dz_{2},
	\end{align*}
	where $ \vartheta_{n}^{2} $ is defined in Lemma \ref{lemma3}. Therefore, the proof is finished.

	\subsection{Proof of Theorem \ref{thm2}}
	Similar to the proof of Theorem \ref{thm1}, we divide the LSSs into two parts. Different from the above analysis, in this section, we focus on the multidimensional case under Assumptions \ref{ass1}--\ref{ass6}. Recall that we defined
	$$ G_{n}\left( x\right)=p\left[F^{\bbB}\left(x \right)-F^{c_{n},H_{n}}\left(x \right)   \right],  $$ 
	$$ Y_{l}=  \int f_{l}\left(x \right)dG_{n}\left( x\right)-\sum_{k=1}^{K}d_{k}f_{l}\left(\phi_{n}\left(\al_{k} \right)  \right)-\frac{M}{2\pi i}\oint_{\mathcal C}f_{l}\left(z \right)\frac{\underline{m}_{2n0}'(z)}{\underline{m}_{2n0}(z)}dz. $$
	Because of equation (\ref{100}), the random vector $\left(  Y_{1},\dots, Y_{h} \right)  $ shares the same asymptotic distribution with the summation of two random vectors  $$ ( \sum_{k=1}^{K}\varpi_{nk1}\sum_{j\in J_{k}}\gamma_{kj} ,\dots,\sum_{k=1}^{K}\varpi_{nkh}\sum_{j\in J_{k}}\gamma_{kj}  )$$ and  $$(   \sum_{j=1}^{p-M}f_{1}\left( \widetilde{\bgl}_{j}\right)-(p-M)\int f_{1}(x)dF^{c_{nM},H_{2n}} ,\dots, \sum_{j=1}^{p-M}f_{h}\left( \widetilde{\bgl}_{j}\right)-(p-M)\int f_{h}(x)dF^{c_{nM},H_{2n}} ).    $$ 
	
	First, we focus on the first random vector. Similar to the proof of Theorem \ref{thm1}, we derive that the mean function of the first random vector is 0 and that the covariance function is
	$$
	\mathrm{Cov}\left( Y_{f_{s}},Y_{f_{t}} \right)=\sum_{k=1}^{K}\frac{\phi_{n}^{2}\left(\bbalp_{k} \right)}{n} f_{s}'\left( \phi_{n}\left(\bbalp_{k} \right)\right)f_{t}'\left( \phi_{n}\left(\bbalp_{k} \right)\right)s_{k}^{2}= \sum_{k=1}^{K}\varpi_{nks}\varpi_{nkt}s_{k}^{2}, 
	$$
	Moreover, the asymptotic distribution of the second random vector is derived in \cite{10.1214/14-AOS1292}. Because of Lemma \ref{lemma5}, two random vectors are asymptotically independent; thus, the random vector
	$$ \left( Y_{1}-\mathbb{E}Y_{1},\dots,Y_{h}-\mathbb{E}Y_{h}\right)^{\top}\stackrel{d}{\rightarrow}N_{h}\left(0, \bgO\right),   $$ 
	with mean function $ \mathbb{E}Y_{l} $ is the same as $ \mu_{l} $, and the covariance matrix is $ \bgO $ with its entries
	\textcolor{black}{\begin{align*}
			\omega_{st}=\sum_{k=1}^{K}\varpi_{nks}\varpi_{nkt}s_{k}^{2}- \kappa_{nst},
		\end{align*} 
		where \begin{align*}
			\kappa_{nst}
			&=\frac{1}{4 \pi^{2}} \oint_{\mathcal{C}_{1}} \oint_{\mathcal{C}_{2}} \frac{f_{s}\left(z_{1}\right) f_{t}\left(z_{2}\right)}{\left(\underline{m}_{2n0} \left(z_{1}\right)-\underline{m}_{2n0}\left(z_{2}\right)\right)^{2}} d \underline{m}_{2n0}\left(z_{1}\right) d \underline{m}_{2n0}\left(z_{2}\right) 
			+\frac{c_{nM} \beta_{x}}{4 \pi^{2}} \oint_{\mathcal{C}_{1}} \oint_{\mathcal{C}_{2}} f_{s}\left(z_{1}\right) \\&f_{t}\left(z_{2}\right)
			\left[\int \frac{t}{\left(\underline{m}_{2n0}\left(z_{1}\right) t+1\right)^{2}}\right.
			\left.\times \frac{t}{\left(\underline{m}_{2n0}\left(z_{2}\right) t+1\right)^{2}} d H_{2n}(t)\right] d \underline{m}_{2n0}\left(z_{1}\right) d \underline{m}_{2n0}\left(z_{2}\right)\\
			&\quad+\frac{1}{4 \pi^{2}} \oint_{\mathcal{C}_{1}} \oint_{\mathcal{C}_{2}} f_{s}\left(z_{1}\right) f_{t}\left(z_{2}\right)\left[\frac{\partial^{2}}{\partial z_{1} \partial z_{2}} \log \left(1-a_{n}\left(z_{1}, z_{2}\right)\right)\right] d z_{1} d z_{2},\\
			a_{n}\left(z_{1}, z_{2}\right)&=\alpha_{x}\left(1+\frac{\underline{m}_{2n0}\left(z_{1}\right) \underline{m}_{2n0}\left(z_{2}\right)\left(z_{1}-z_{2}\right)}{\underline{m}_{2n0}\left(z_{2}\right)-\underline{m}_{2n0}\left(z_{1}\right)}\right).
		\end{align*} 
		Then, we obtain the random vector $$ \left( \frac{Y_{1}-\mathbb{E}Y_{1}}{\sigma_{1}},\dots,\frac{Y_{h}-\mathbb{E}Y_{h}}{\sigma_{h}}\right)^{\top}\stackrel{d}{\rightarrow}N_{h}\left(0, \bPsi \right), 
		$$ which has a mean function that is the same as that in Theorem \ref{thm1}, and variance function 
		\begin{align*}
			\sigma_{l}^{2}&=\sum_{k=1}^{K}\varpi_{nkl}^{2} s_{k}^{2}-\kappa_{nll}, \quad l=1,\dots,h,
		\end{align*} 
		and the covariance matrix $  \bPsi =\left( \psi_{st}\right) _{h\times h} $ is the correlation coefficient matrix of random vector $ (Y_{1},\dots,Y_{h})^{\top} $  with its entries $ \psi_{st}=\lim_{n\rightarrow\infty}\psi_{nst}, $
		\begin{align*}
			\psi_{nst}=\frac{\sum_{k=1}^{K}\varpi_{nks}\varpi_{nkt}s_{k}^{2}- \kappa_{nst}}{\sqrt{\sum_{k=1}^{K}\varpi_{nks}^{2}s_{k}^{2}- \kappa_{nss}}\sqrt{\sum_{k=1}^{K}\varpi_{nkt}^{2}s_{k}^{2}- \kappa_{ntt}}},
	\end{align*} }
	Note that renormalization is necessary to guarantee that elements in the correlation coefficient matrix $ \bPsi $ are limited. Therefore, the proof is finished.

	\subsection{Proof of Theorem \ref{thm3}}
	
	The result under $ H_{0} $ is a direct result of Theorem 4.1 in \cite{10.1214/14-AOS1292}  using the substitution principle. Therefore, we omit the proof here. Next, we focus on the result under $ H_{1}. $
	Recall that $$ G_{n}\left( x\right)=p\left[F^{\bbB}\left(x \right)-F^{c_{n},H_{n}}\left(x \right)   \right],  $$ $$  Y= \int f_{L}\left(x \right)dG_{n}\left( x\right)-\sum_{k=1}^{K}d_{k}f_{L}\left(\phi_{n}\left(\al_{k} \right)  \right)-\frac{M}{2\pi i}\oint_{\mathcal C}f_{L}\left(z \right)\frac{\underline{m}_{2n0}'(z)}{\underline{m}_{2n0}(z)}dz,$$
	when $ f_{L}(x)=x-\log x-1. $ After some calculations, we obtain
	\begin{gather}
		\nonumber\int f_{L}\left(x \right)dG_{n}\left( x\right)=\mathrm{tr}\bbB-\log \left|\bbB \right|-p-p\int f_{L}(x)dF^{c_{n},H_{n}}(x)=L-p\int f_{L}(x)dF^{c_{n},H_{n}}(x),\\
		p\int f_{L}(x)dF^{c_{n},H_{n}}(x)=(p-M)(1-\frac{c_{nM}-1}{c_{nM}} \log \left(1-c_{nM}\right)),\label{32}\\	
		\nonumber\sum_{k=1}^{K}d_{k}f_{L}\left(\phi_{n}\left(\al_{k} \right)  \right)=\sum_{k=1}^{K}d_{k}\left(\phi_{n}\left(\al_{k} \right)-\log \phi_{n}\left(\al_{k} \right)-1 \right),\\
		\frac{M}{2\pi i}\oint_{\mathcal C}f_{L}\left(z \right)\frac{\underline{m}_{2n0}'(z)}{\underline{m}_{2n0}(z)}dz= -M(c_{nM}+\log(1-c_{nM})),\label{31}
	\end{gather} 
	where (\ref{32}) is obtained from Lemma \ref{lemma1} and \cite{10.1214/09-AOS694}. For consistency, we present the proof of (\ref{31}) in Section \ref{sectionB}.
	According to Theorem \ref{thm1}, since $ f_{L}(x)=x-\log x-1 $, $ \bbD_{2}=\bbI_{p-M}, $ $ \bGma=\bbV_{2}\bbU_{2}^{*} $, then we have
	\begin{align*}
		\frac{L-p\int f_{L}(x)dF^{c_{n},H_{n}}(x)-\breve{\mu}_{l}}{ \breve{\varsigma}_{l}}\stackrel{d}{\rightarrow}N(0,1),
	\end{align*}	where the mean function is $\breve{\mu}_{l}=-\frac{\log \left(1-c_{nM}\right)}{2}\al_{x}+\frac{c_{nM}}{2} \beta_{x} 
	+\sum_{k=1}^{K}d_{k}( \phi_{n}\left({\al}_{k} \right)-\log\phi_{n}\left({\al}_{k} \right)-1 )-M(c_{nM}+\log(1-c_{nM}))$. \textcolor{black}{ For covariance term, $ \breve{\varsigma}_{l}^{2} $ equals \\ $ \sum_{k=1}^{K}\frac{\left( \phi_{n}\left(\al_{k} \right)-1 \right) ^{2}}{n}s_{k}^{2} -\frac{1}{4\pi^{2}}\oint_{\mathcal{C}_{1}}\oint_{\mathcal{C}_{2}}\left(z_{1}-\log\left(z_{1} \right)-1  \right) \left(z_{2}-\log\left(z_{2} \right)-1  \right)\vartheta_{n}^{2}dz_{1}dz_{2}, $ 
		where $ s_{k}^{2}= \frac{\left(\al_{x}+1 \right)d_{k} }{\theta_{k}}+\frac{\sum_{j_{1}, j_{2}\in J_{k}}\mathcal{U}_{j_{1}j_{1}j_{2}j_{2}}  \beta_{x}\nu_{k} }{\theta_{k}^{2}}  $, $ \vartheta_{n}^{2}=\Theta_{0,n}(z_{1},z_{2})+\al_{x}\Theta_{1,n}(z_{1},z_{2})+\beta_{x}\Theta_{2,n}(z_{1},z_{2}), $ and
		\begin{align*}
			\Theta_{0,n}(z_{1},z_{2})=& \dfrac{\underline{m}_{2n0}^{\prime}(z_{1}) \underline{m}_{2n0}^{\prime}(z_{2})  }{(\underline{m}_{2n0}(z_{1})-\underline{m}_{2n0}(z_{2}) )^{2} }-\dfrac{1}{(z_{1}-z_{2})^{2}}.
		\end{align*}	
		For $ \Theta_{1,n}(z_{1},z_{2}), $ since
		\begin{align*}
			\bbP_{n}\left(z \right)=&\left(\left(1-c_{nM}\right)\bbV_{2}\bbV_{2}^{*}-zc_{nM}m_{2n0}\left(z \right)\bbV_{2}\bbV_{2}^{*}-z\bbI_{p}   \right) ^{-1}, 
		\end{align*}
		then
		\begin{align*}
			{\mathcal{A}_{n}(z_{1},z_{2})}=&\frac{z_{1}z_{2}}{n}\underline{m}_{2n0}(z_{1})\underline{m}_{2n0}(z_{2})\mathrm{tr}\bbU_{2}\bbV_{2}^{*}\bbP_{n}\left(z_{1} \right)\bbV_{2}\bbU_{2}^{*}\bar{\bbU}_{2}\bbV_{2}^{\top}\bbP_{n}^{\top}\left(z_{2} \right)\bar{\bbV}_{2}\bbU_{2}^{\top} ,\\
			=&\frac{z_{1}z_{2}\underline{m}_{2n0}(z_{1})\underline{m}_{2n0}(z_{2})\mathrm{tr}\bbU_{2}\bbU_{2}^{*}\bar{\bbU}_{2}\bbU_{2}^{\top}}{n\left( 1-c_{nM}-z_{1}c_{nM}m_{2n0}(z_{1})-z_{1}\right) \left(1-c_{nM}-z_{2}c_{nM}m_{2n0}(z_{2})-z_{2} \right) },\\
			=&\frac{\underline{m}_{2n0}(z_{1})\underline{m}_{2n0}(z_{2})}{n\left(1+\underline{m}_{2n0}(z_{1}) \right)\left(1+\underline{m}_{2n0}(z_{2}) \right) }\mathrm{tr}\bbU_{2}\bbU_{2}^{*}\bar{\bbU}_{2}\bbU_{2}^{\top}.
		\end{align*}
		For $ \mathrm{tr}\bbU_{2}\bbU_{2}^{*}\bar{\bbU}_{2}\bbU_{2}^{\top}, $	since $ \bbU\bbU^{*}=\bbI_{p}, $ therefore $ \mathrm{tr}\bbU_{2}\bbU_{2}^{*}\bar{\bbU}_{2}\bbU_{2}^{\top}=\mathrm{tr}\left(\bbI_{p}-\bbU_{1}\bbU_{1}^{*} \right)\left(\bbI_{p}-\bbU_{1}\bbU_{1}^{*} \right)^{\top}=p-\mathrm{tr}\left( \bbU_{1}\bbU_{1}^{*}\right) ^{\top}-\mathrm{tr}\bbU_{1}\bbU_{1}^{*}+\mathrm{tr}\bbU_{1}\bbU_{1}^{*}\left( \bbU_{1}\bbU_{1}^{*}\right) ^{\top}.  $ Moreover, since $ \mathrm{tr}\bbU_{1}\bbU_{1}^{*}=M, $ $\mathrm{tr}\bbU_{1}\bbU_{1}^{*}\left( \bbU_{1}\bbU_{1}^{*}\right) ^{\top}\\=\sum_{s,t=1}^{p}\left(\sum_{i=1}^{M}u_{si}\bar{u}_{ti} \right)^{2}.   $ Therefore, $ {\mathcal A}_{n\left(z_{1},z_{2} \right) }=\frac{p-2M+\sum_{s,t=1}^{p}\left(\sum_{i=1}^{M}u_{si}\bar{u}_{ti} \right)^{2}}{n}\frac{\underline{m}_{2n0}(z_{1})\underline{m}_{2n0}(z_{2})}{\left(1+\underline{m}_{2n0}(z_{1}) \right)\left(1+\underline{m}_{2n0}(z_{2}) \right) }. $ Denote $ \tilde{c}=\frac{p-2M+\sum_{s,t=1}^{p}\left(\sum_{i=1}^{M}u_{si}\bar{u}_{ti} \right)^{2}}{n}, $ then $ {\mathcal A}_{n}\left(z_{1},z_{2} \right)= \frac{\tilde{c}\underline{m}_{2n0}(z_{1})\underline{m}_{2n0}(z_{2})}{\left(1+\underline{m}_{2n0}(z_{1}) \right)\left(1+\underline{m}_{2n0}(z_{2}) \right) }.  $ Therefore
		\begin{align*}
			\Theta_{1,n}(z_{1},z_{2})=&\frac{\partial}{\partial z_{2}}\left\lbrace \dfrac{\partial \mathcal{A}_{n}(z_{1},z_{2})}{\partial z_{1}}\dfrac{1}{1-\al_{x}\mathcal{A}_{n}(z_{1},z_{2})} \right\rbrace,\\=&\dfrac{\tilde{c} \underline{m}^{\prime}_{2n0}(z_{1})\underline{m}^{\prime}_{2n0}(z_{2}) }{\left(\left(1+\underline{m}_{2n0}(z_{1}) \right)\left(1+\underline{m}_{2n0}(z_{2}) \right)-\al_{x}\tilde{c}\underline{m}_{2n0}(z_{1})\underline{m}_{2n0}(z_{2}) \right)^{2}   } .
		\end{align*}
		For $ \Theta_{2,n}(z_{1},z_{2}), $ since
		$	\Theta_{2,n}(z_{1},z_{2})=\dfrac{z_{1}^{2}z_{2}^{2}\underline{m}_{2n0}^{\prime}(z_{1}) \underline{m}_{2n0}^{\prime}(z_{2})}{n}\sum_{i=1}^{p}\left[ \bGma^{*}\bbP_{n}^{2}(z_{1})\bGma\right]  _{ii}\left[ \bGma^{*}\bbP_{n}^{2}(z_{2})\bGma\right]  _{ii}, $
		and
		\begin{align*}
			\bGma^{*}\bbP_{n}^{2}(z_{1})\bGma=\bbU_{2}\bbV_{2}^{*}\left((1-c_{nM})\bbV_{2}\bbV_{2}^{*}-z_{1}c_{nM}m_{2n0}(z_{1})\bbV_{2}\bbV_{2}^{*}-z\bbI_{p} \right) ^{-2}\bbV_{2}\bbU_{2}^{*},
		\end{align*}	
		by using lemma \ref{inoutexchange}, we have
		\begin{align*}
			\bbP_{n}\left(z_{1} \right) =\dfrac{\underline{m}_{2n0}(z_{1})}{z\left(1+\underline{m}_{2n0}(z_{1}) \right) }\bbV_{2}\bbV_{2}^{*}-\frac{1}{z_1}\bbI_{p},
		\end{align*}
		then 
		\begin{align*}
			\bGma^{*}\bbP_{n}^{2}(z_{1})\bGma=&\bbU_{2}\bbV_{2}^{*}(\dfrac{\underline{m}_{2n0}(z_{1})}{z_{1}\left(1+\underline{m}_{2n0}(z_{1}) \right) }\bbV_{2}\bbV_{2}^{*}-\frac{1}{z_{1}}\bbI_{p} ) (\dfrac{\underline{m}_{2n0}(z_{1})}{z_{1}\left(1+\underline{m}_{2n0}(z_{1}) \right) }\bbV_{2}\bbV_{2}^{*}-\frac{1}{z_{1}}\bbI_{p} )\bbV_{2}\bbU_{2}^{*},\\
			=&\bbU_{2}\bbV_{2}^{*}( \dfrac{\underline{m}^{2}_{2n0}(z_{1})}{z_{1}^{2}\left(1+\underline{m}_{2n0}(z_{1}) \right)^{2} }  \bbV_{2}\bbV_{2}^{*}-\dfrac{2\underline{m}_{2n0}(z_{1})}{z_{1}^{2}\left(1+\underline{m}_{2n0}(z_{1}) \right) }  \bbV_{2}\bbV_{2}^{*}+\frac{1}{z_{1}^{2}}\bbI_{p} ) \bbV_{2}\bbU_{2}^{*},\\
			=&\bbU_{2}( \dfrac{\underline{m}^{2}_{2n0}(z_{1})}{z_{1}^{2}\left(1+\underline{m}_{2n0}(z_{1}) \right)^{2} }\bbI_{p-M}  -\dfrac{2\underline{m}_{2n0}(z_{1})}{z_{1}^{2}\left(1+\underline{m}_{2n0}(z_{1}) \right) } \bbI_{p-M} +\frac{1}{z_{1}^{2}}\bbI_{p-M}) \bbU_{2}^{*},\\
			=&\frac{1}{z_{1}^{2}\left(1+\underline{m}_{2n0}(z_{1}) \right)^{2} }\bbU_{2}\bbU_{2}^{*}.
		\end{align*}
		Therefore,
		\begin{align*}
			\sum_{i=1}^{p}[ \bGma^{*}\bbP_{n}^{2}(z_{1})\bGma]  _{ii}[ \bGma^{*}\bbP_{n}^{2}(z_{2})\bGma]_{ii}=\sum_{i=1}^{p}[  \frac{1}{z_{1}^{2}\left(1+\underline{m}_{2n0}(z_{1}) \right)^{2} }\bbU_{2}\bbU_{2}^{*}]_{ii} [  \frac{1}{z_{2}^{2}\left(1+\underline{m}_{2n0}(z_{2}) \right)^{2} }\bbU_{2}\bbU_{2}^{*}]_{ii}. 
		\end{align*}
		Since
		\begin{align*}
			&	[  \frac{1}{z_{1}^{2}\left(1+\underline{m}_{2n0}(z_{1}) \right)^{2} }\bbU_{2}\bbU_{2}^{*}]_{ii}=[ \frac{1}{z_{1}^{2}\left(1+\underline{m}_{2n0}(z_{1}) \right)^{2}}\bbI_{p}- \frac{1}{z_{1}^{2}\left(1+\underline{m}_{2n0}(z_{1}) \right)^{2} }\bbU_{1}\bbU_{1}^{*}]_{ii}\\&=\dfrac{1-\sum_{j=1}^{M}\left|u_{ij} \right|^{2}  }{z_{1}^{2}\left(1+\underline{m}_{2n0}(z_{1}) \right)^{2}},
		\end{align*}
		then 
		\begin{align*}
			&\sum_{i=1}^{p}\left[ \bGma^{*}\bbP_{n}^{2}(z_{1})\bGma\right]_ {ii}\left[ \bGma^{*}\bbP_{n}^{2}(z_{2})\bGma\right]_{ii}=\sum_{i=1}^{p}\dfrac{\left( 1-\sum_{j=1}^{M}\left|u_{ij} \right|^{2} \right)^{2}   }{z_{1}^{2}z_{2}^{2}\left(1+\underline{m}_{2n0}(z_{1}) \right)^{2}\left(1+\underline{m}_{2n0}(z_{2}) \right)^{2}},\\
			=& \dfrac{p-2M+\sum_{i=1}^{p}\left( \sum_{j=1}^{M}\left|u_{ij} \right|\right) ^{2}    }{z_{1}^{2}z_{2}^{2}\left(1+\underline{m}_{2n0}(z_{1}) \right)^{2}\left(1+\underline{m}_{2n0}(z_{2}) \right)^{2}}
			=\dfrac{p-2M+\sum_{j_{1},j_{2}=1}^{M}{\mathcal U_{j_{1}j_{1}j_{2}j_{2}}}  }{z_{1}^{2}z_{2}^{2}\left(1+\underline{m}_{2n0}(z_{1}) \right)^{2}\left(1+\underline{m}_{2n0}(z_{2}) \right)^{2}}.
		\end{align*}
		Then 
		\begin{align*}
			\Theta_{2,n}(z_{1},z_{2})=&\dfrac{z_{1}^{2}z_{2}^{2}\underline{m}_{2n0}^{\prime}(z_{1}) \underline{m}_{2n0}^{\prime}(z_{2})}{n}\sum_{i=1}^{p}\left[ \bGma^{*}\bbP_{n}^{2}(z_{1})\bGma\right]  _{ii}\left[ \bGma^{*}\bbP_{n}^{2}(z_{2})\bGma\right]  _{ii},\\
			=&
			\dfrac{z_{1}^{2}z_{2}^{2}\underline{m}_{2n0}^{\prime}(z_{1}) \underline{m}_{2n0}^{\prime}(z_{2})}{n}\dfrac{p-2M+\sum_{j_{1},j_{2}=1}^{M}{\mathcal U_{j_{1}j_{1}j_{2}j_{2}}}  }{z_{1}^{2}z_{2}^{2}\left(1+\underline{m}_{2n0}(z_{1}) \right)^{2}\left(1+\underline{m}_{2n0}(z_{2}) \right)^{2}}\\
			=&\dfrac{p-2M+\sum_{j_{1},j_{2}=1}^{M}{\mathcal U_{j_{1}j_{1}j_{2}j_{2}}} }{n}\dfrac{\underline{m}_{2n0}^{\prime}(z_{1}) \underline{m}_{2n0}^{\prime}(z_{2})}{\left(1+\underline{m}_{2n0}(z_{1}) \right)^{2}\left(1+\underline{m}_{2n0}(z_{2}) \right)^{2}}.
		\end{align*}
		Since the covariance of bulk part is $ -\frac{1}{4\pi^{2}}\oint_{\mathcal{C}_{1}}\oint_{\mathcal{C}_{2}}\left(z_{1}-\log\left(z_{1} \right)-1  \right) \left(z_{2}-\log\left(z_{2} \right)-1  \right)\vartheta_{n}^{2}dz_{1}dz_{2} $, where
		$ \vartheta_{n}^{2}=\Theta_{0,n}(z_{1},z_{2})+\al_{x}\Theta_{1,n}(z_{1},z_{2})+\beta_{x}\Theta_{2,n}(z_{1},z_{2}). $ By  contour integral calculations, we obtain the covariance equals
		$  -\log(1-c_{nM})-c_{nM}+ \al_{x}\left(-\log(1-\tilde{c})-\tilde{c} \right), $ where $\tilde{c}=\dfrac{p-2M+\sum_{s,t=1}^{p}\left(\sum_{i=1}^{M}u_{si}\bar{u}_{ti} \right)^{2} }{n}. $ Since $ \tilde{c}-c_{nM}\rightarrow0 $ as $ n\rightarrow\infty, $
		therefore 
		\begin{align*}	\breve{\varsigma}_{l}^{2} =&\sum_{k=1}^{K}\frac{\left(\phi_{n}\left(\al_{k} \right)-1  \right)^{2} }{n}s_{k}^{2}+(\al_{x}+1)\left(-\log(1-c_{nM})-c_{nM} \right),
		\end{align*} 
		and 
		\begin{align*}
			\frac{L-p\int f_{L}(x)dF^{c_{n},H_{n}}(x)-\breve{\mu}_{l}}{ \breve{\varsigma}_{l}}\stackrel{d}{\rightarrow}N(0,1).
		\end{align*}
		The proof of Theorem \ref{thm3} is finished. }
	
	
	
	\subsection{Proof of Theorem \ref{thm4}}
	First, we focus on the results under $ H_{0} $. From Lemma \ref{simple result}, we have
	\begin{align*}
		I_{1}(f_{W})&=c,	\\
		I_{2}(f_{W})&=c, \\
		J_{1}(f_{W},f_{W})&=4c^{3}+2c^{2},\\
		J_{2}(f_{W},f_{W})&=4c^{3},
	\end{align*}	
	which then yields
	\begin{align*}
		\mu_{w}&=\al_{x}I_{1}(f_{W})+\beta_{x}I_{2}(f_{W})=\al_{x}c+\beta_{x}c,\\
		\varsigma_{w}^{2}&=(\al_{x}+1)J_{1}(f_{W},f_{W})+\beta_{x}J_{2}(f_{W},f_{W})= (\al_{x}+1)(4c^{3}+2c^{2})+4\beta_{x}c^{3}.
	\end{align*}
	The results are still valid if $ c $ is replaced by $ c_{n} $. Moreover, the center term
	\begin{align}\label{center term}
		\int f_{W}(x) dF^{c_{n},H_{n}} = c_{n},	
	\end{align}	
	is a direct result of Lemma 2.2 in \cite{wang2013sphericity}.
	Therefore, from \cite{10.1214/14-AOS1292} or \cite{wang2013sphericity}, we have
	\begin{align*}
		\frac{W-p\int f_{W}(x)dF^{c_{n},H_{n}}-\mu_{w}  }{\varsigma_{w}}\stackrel{d}{\longrightarrow} N(0,1).
	\end{align*}
	Then, we focus on the results under $ H_{1} $. Note that
	\begin{align*}
		Y= \int f_{W}\left(x \right)dG_{n}\left( x\right)-\sum_{k=1}^{K}d_{k}f_{W}\left(\phi_{n}\left(\al_{k} \right)  \right)-\frac{M}{2\pi i}\oint_{\mathcal C}f_{W}\left(z \right)\frac{\underline{m}_{2n0}'(z)}{\underline{m}_{2n0}(z)}dz.
	\end{align*}	
	After some calculations, we obtain
	\begin{gather}
		\nonumber\int f_{W}\left(x \right)dG_{n}\left( x\right)=\mathrm{tr}(\bbB-\bbI_{p})^{2}-p\int f_{W}(x)dF^{c_{n},H_{n}}=W-p\int f_{W}(x)dF^{c_{n},H_{n}},\\
		\nonumber p\int f_{W}(x)dF^{c_{n},H_{n}}=(p-M)\int f_{W}(x)dF^{c_{nM},H_{2n}}=(p-M)c_{nM},\\
		\nonumber\sum_{k=1}^{K}d_{k}f_{W}\left(\phi_{n}\left(\al_{k} \right)  \right)=\sum_{k=1}^{K}d_{k}\left(\phi_{n}^{2}\left(\al_{k} \right)-2\phi_{n}\left(\al_{k} \right)+1 \right),\\
		\frac{M}{2\pi i}\oint_{\mathcal C}f_{W}\left(z \right)\frac{\underline{m}_{2n0}'(z)}{\underline{m}_{2n0}(z)}dz= -M c_{nM}^{2}.  
		\label{l_{n}}
	\end{gather} 
	For consistency, we present the proof of (\ref{l_{n}}) in Section \ref{sectionB}. Therefore, from Theorem \ref{thm1}, we have
	\begin{align*}
		\dfrac{W-(p-M)\breve{\ell}_{w}-\breve{\mu}_{w}}{\breve{\varsigma}_{w}}\stackrel{d}{\longrightarrow} N(0,1),
	\end{align*}	
	where 
	\begin{align*}
		\breve{\ell}_{w}&=c_{nM},~~~
		\breve{\mu}_{w}=\al_{x}c_{nM}+\beta_{x}c_{nM}+\sum_{k=1}^{K}d_{k}\left( \phi_{n}^{2}\left({\al}_{k} \right)-2\phi_{n}\left({\al}_{k} \right)+1 \right)-M c_{nM}^{2}, \\
		\breve{\varsigma}_{w}^{2} =&-\frac{1}{4\pi^{2}}\oint_{\mathcal{C}_{1}}\oint_{\mathcal{C}_{2}}\left( z_{1}-1\right)^{2} \left( z_{2}-1\right)^{2}\vartheta_{n}^{2}dz_{1}dz_{2}+ \sum_{k=1}^{K}\frac{4\phi_{n}^{2}\left(\al_{k} \right) \left(\phi_{n}\left(\al_{k} \right)-1  \right)^{2} }{n}s_{k}^{2}.
	\end{align*}
	\textcolor{black}{Since $ \vartheta_{n}^{2}=\Theta_{0,n}(z_{1},z_{2})+\al_{x}\Theta_{1,n}(z_{1},z_{2})+\beta_{x}\Theta_{2,n}(z_{1},z_{2}), $ and $ \Theta_{0,n}(z_{1},z_{2}), \Theta_{1,n}(z_{1},z_{2}),$ and $ \Theta_{2,n}(z_{1},z_{2}) $ are calculated in the proof of Theorem \ref{thm3}, then by some calculations we obtain $ -\frac{1}{4\pi^{2}}\oint_{\mathcal{C}_{1}}\oint_{\mathcal{C}_{2}}\left( z_{1}-1\right)^{2} \left( z_{2}-1\right)^{2}\vartheta_{n}^{2}dz_{1}dz_{2} $ equals $4c_{nM}^{3}+2c_{nM}^{2}+\al_{x}\left(4\tilde{c}^{3}+2\tilde{c}^{2} \right)+4\beta_{x}\check{c}^{3}, $ where
		$\check{c}=\dfrac{p-2M+\sum_{j_{1},j_{2}=1}^{M}{\mathcal U_{j_{1}j_{1}j_{2}j_{2}}}}{n}$. Since $ \tilde{c}-c_{nM}\rightarrow0, $ $ \check{c}-c_{nM}\rightarrow0 $ as $ n\rightarrow\infty, $
		therefore 
		\begin{align*} \breve{\varsigma}_{w}^{2} =&\sum_{k=1}^{K}\frac{4\phi_{n}^{2}\left(\al_{k} \right) \left(\phi_{n}\left(\al_{k} \right)-1  \right)^{2} }{n}s_{k}^{2}+(\al_{x}+1)\left(4c_{nM}^{3}+2c_{nM}^{2} \right)+4\beta_{x}c_{nM}^{3},
		\end{align*} and
		\begin{align*}
			\dfrac{W-(p-M)\breve{\ell}_{W}-\breve{\mu}_{w}}{\breve{\varsigma}_{w}}\stackrel{d}{\longrightarrow} N(0,1),
		\end{align*}
		then the proof is finished. }
	
	\subsection{Proof of Theorem \ref{CLRT power} }
	\textcolor{black}{
		Let $ \xi $ be the significance level, and  $ z_{\xi} $ is the  $ 1-\xi $ quantile of the standard Gaussian distribution $ \Phi$. Since
		$$ \xi=P\left( L >z_{\xi}\varsigma_{l}+p\ell_{l}+\mu_{l}   \right), 
		$$ 
		for brevity, we denote $ L_{0}=p\ell_{l}+\mu_{l} $, 
		$ L_{1}=(p-M)\breve{\ell}_{l}   +\breve{\mu}_{l}. $ Therefore, the power to detect the hypothesis is
		\begin{align*}
			P_L=P\left( L>z_{\xi}\varsigma_{l}+L_0 \right)&=P\left(\frac{ L -L_{1}}{\breve{\varsigma}_{l}} >\frac{z_{\xi}\varsigma_{l}+L_0-L_{1}}{\breve{\varsigma}_{l}}\right)
		\end{align*}
		Since $ \frac{ L -L_{1}}{\breve{\varsigma}_{l}} $ is asymptotically normal distributed, then $ P_L $ is approxiamte to $ \Phi\left( \frac{L_{1}-L_{0}}{\breve{\varsigma}_{l}}-z_{\xi}\frac{\varsigma_{l}}{\breve{\varsigma}_{l}}\right). $
		After some elementary calculations,   we obtain as $ n\rightarrow \infty, $ $$ L_{1}-L_{0}\rightarrow-Mc+\sum_{k=1}^{K}d_{k}(\phi_{k}-\log \phi_{k}-1 ), $$
		$$ \varsigma_{l}\rightarrow\sqrt{(\alpha_x+1) (-\log(1-c)-c)},   $$
		$$ \breve{\varsigma}_{l}-\sqrt{(\al_{x}+1) (-\log(1-c)-c) +\sum_{k=1}^{K}\frac{\left( \phi_{n}\left(\al_{k} \right)-1\right) ^{2} }{n}}s_k^2\rightarrow0.     $$
		Therefore, we have as $ n $ tends to infinity,
		\begin{align} 
			P_{L}- \Phi\left(  \frac{  \sum_{k=1}^{K}d_k\left( \phi_{n}(\alpha_{k})-\log\phi_{n}(\alpha_{k}) \right)-M-Mc-z_{\xi}\varsigma_{l}  }{ \sqrt{(\al_{x}+1) (-\log(1-c)-c) +\sum_{k=1}^{K}\frac{\left( \phi_{n}\left(\al_{k} \right)-1\right) ^{2} }{n}}s_k^2 }   \right)  \rightarrow 0, 
		\end{align}
		then the proof of Theorem \ref{CLRT power} is finished.} 
	\\
	
	\subsection{Proof of Theorem \ref{CNTT power}}
	\textcolor{black}{
		Since
		$ 
		\xi=P\left( W >z_{\xi}\varsigma_{w}+c_{n}\left(p+\al_{x}+\beta_{x} \right)  \right),$
		for brevity, we use the notation $ W_{0}=pc_{n}+c_n(\al_x+\beta_{x}), $ $ W_{1}=(p-M)\int f_{W}(x)dF^{c_{nM},H_{2n}}   +\breve{\mu}_{w}. $ Therefore, the power to detect the hypothesis is
		\begin{align*}
			P_W=P\left( W >z_{\xi}\varsigma_{w}+W_0 \right)&=P\left(\frac{W -W_{1}}{\breve{\varsigma}_{w}} >\frac{z_{\xi}\varsigma_{w}+W_0-W_{1}}{\breve{\varsigma}_{w}}\right)
		\end{align*}
		Since $ \frac{W -W_{1}}{\breve{\varsigma}_{w}} $ is asymptotically normal distributed, then $ P_W $ is approxiamted to $ \Phi( \frac{W_{1}-W_{0}}{\breve{\varsigma}_{w}}-z_{\xi}\frac{\varsigma_{w}}{\breve{\varsigma}_{w}}). $
		Here
		$
		W_{1}-W_{0}=(p-M)c_{nM}-pc_{n}+( \beta_{x}+\al_x)c_{nM}-( \beta_{x}+\al_x)c_{n}+\sum_{k=1}^{K}d_{k}( \phi_{n}\left(\al_{k} \right)-1 )^{2}-Mc_{nM}^{2}.     
		$
		After some elementary calculations, we obtain, as $ n\rightarrow \infty $,
		$$
		W_{1}-W_{0}\rightarrow \sum_{k=1}^{K}d_k\left( \phi_{n}\left(\al_{k} \right)-1 \right)^{2}-Mc^{2}-2Mc,$$
		$$	\varsigma_{w}\rightarrow \sqrt{(\al_x+1)(4c^3+2c^2)+4\beta_xc^3},$$
		$$	\breve{\varsigma}_{w}-\sqrt{(\al_x+1)(4c^3+2c^2)+4\beta_xc^3+ \sum_{k=1}^{K}\dfrac{4\phi_{n}^{2}\left(\al_{k} \right)\left(\phi_{n}\left(\al_{k} \right)-1 \right)^{2}   }{n}s_k^2}\rightarrow 0.$$
		Then we have as $ n $ tends to infinity,  
		\begin{align} 
			P_{W}- \Phi\left(  \frac{\sum_{k=1}^{K}d_k\left( \phi_{k}-1 \right)^{2}-Mc^{2}-2Mc-z_{\xi}\varsigma_w  }{ \sqrt{ (\al_x+1)(4c^3+2c^2)+4\beta_xc^3+ \sum_{k=1}^{K}\dfrac{4\phi_{n}^{2}\left(\al_{k} \right)\left(\phi_{n}\left(\al_{k} \right)-1 \right)^{2}   }{n}s_k^2  }   }   \right)  \rightarrow 0,
		\end{align}	
		then the proof is finished.}
	\\
	
	\subsection{Proof of Theorem \ref{RLRT power}}
	\textcolor{black}{
		From \cite{JiangB21G}, for spike $ \al_{1} $, we eliminate the multiplicity of it and then we have
		\begin{align*}
			\sqrt{\frac{n\theta_{1}^{2}}{2\theta_{1}+\sum_{t=1}^{p}\left| u_{t1}\right|^{4}\beta_{x}\nu_{1} }}\frac{\lambda_{1}-\phi_{n}\left(\al_{1} \right) }{\phi_{n}\left(\al_{1} \right)}\stackrel{d}{\longrightarrow}N\left(0,1 \right). 
		\end{align*}
		Then the power of test R  equals
		\begin{align*}
			P_{R} &= P\left(\lambda_{1}> t_{\xi}\varsigma_{r}+\mu_{r} \right) \\
			&= P\left( \sqrt{\frac{n\theta_{1}^{2}}{2\theta_{1}+\sum_{t=1}^{p}\left| u_{t1}\right|^{4}\beta_{x}\nu_{1} }}\frac{\lambda_{1}-\phi_{n}\left(\al_{1}\right)}{\phi_{n}\left(\al_{1}\right)} >\notag\right.
			\\
			\phantom{=\;\;}&
			\left.\sqrt{\frac{n\theta_{1}^{2}}{2\theta_{1}+\sum_{t=1}^{p}\left| u_{t1}\right|^{4}\beta_{x}\nu_{1} }} \frac{t_{\xi}\varsigma_{r}+\mu_{r}-\phi_{n}\left(\al_{1}\right)}{\phi_{n}\left(\al_{1}\right)} \right) 
		\end{align*}
		Since $ \sqrt{\frac{n\theta_{1}^{2}}{2\theta_{1}+\sum_{t=1}^{p}\left| u_{t1}\right|^{4}\beta_{x}\nu_{1} }}\frac{\lambda_{1}-\phi_{n}\left(\al_{1} \right) }{\phi_{n}\left(\al_{1} \right)} $ is asymptotically standard normal distributed, then $ P_R $ is approximate to  $ 1-\Phi\left(\sqrt{\frac{n\theta_{1}^{2}}{2\theta_{1}+\sum_{t=1}^{p}\left| u_{t1}\right|^{4}\beta_{x}\nu_{1} }} \frac{t_{\xi}\varsigma_{r}+\mu_{r}-\phi_{n}\left(\al_{1}\right)}{\phi_{n}\left(\al_{1}\right)} \right), $ and it equals \\$ \Phi( -\sqrt{n} \frac{t_{\xi}\varsigma_{r}+\mu_{r}-\phi_{n}\left(\al_{1}\right)}{s_{1}\phi_{n}\left(\al_{1}\right)} ), $
		then the proof is finished.}

	\section{Some deviations and calculations}\label{sectionB}
	This section contains proof of formulas stated in the proof of Theorems \ref{thm3} and \ref{thm4}. \textbf{We begin by deriving formula (\ref{31})}.
	First, we consider $ \oint_{\mathcal C}f_{L}\left(z \right)\frac{\underline{m}'(z)}{\underline{m}(z)}dz $. 
	\begin{align}
		&\nonumber\oint_{\mathcal C}f_{L}\left(z \right)\frac{\underline{m}'(z)}{\underline{m}(z)}dz
		=\nonumber\oint_{\mathcal C}f_{L}\left(z \right)d\log\underline{m}\left(z \right)=\nonumber-\oint_{\mathcal C}f_{L}^{'}\left(z \right)\log\underline{m}\left(z \right)dz \\
		=&\nonumber\int_{a(c)}^{b(c)}f_{L}^{'}\left(z \right)\left[\log\underline{m}(x+i\varepsilon)-\log\underline{m}(x-i\varepsilon) \right]dx\\
		=&2i 	\int_{a(c)}^{b(c)}f_{L}^{'}\left(z \right) \Im \log\underline{m}(x+i\varepsilon)dx \label{26}
	\end{align}	
	Here, $a(c)=(1-\sqrt{c})^{2}  $ and $b(c)=(1+\sqrt{c})^{2}  $. Since $$\underline{m}\left(z \right)=-\frac{1-c}{z}+cm\left(z \right),  $$
	under $ H_{1} $, we have $$ \underline{m}\left(z \right)=\frac{-(z+1-c)+\sqrt{(z-1-c)^{2}-4c}}{2z} . $$
	As $z \rightarrow x \in$ $[a(c), b(c)]$, we obtain
	$$ \underline{m}\left(x \right)= \frac{-(x+1-c)+\sqrt{4c-(x-1-c)^{2}}i}{2x}.$$
	Therefore,
	\begin{align*}
		&\int_{a(c)}^{b(c)}f_{L}^{'}\left(z \right) \Im \log\underline{m}(x+i\varepsilon)dx\\
		&=\int_{a(c)}^{b(c)} f_{L}^{'}(x)\tan ^{-1}\left(\frac{\sqrt{4 c-(x-1-c)^{2}}}{-(x+1-c)}\right) d x\\
		&=\left[\left.\tan ^{-1}\left(  \frac{\sqrt{4 c-(x-1-c)^{2}}}{-(x+1-c)}\right)  f_{L}(x)\right|_{a(c)} ^{b(c)}-\int_{a(c)}^{b(c)} f_{L}(x) d \tan ^{-1}\left(\frac{\sqrt{4 c-(x-1-c)^{2}}}{-(x+1-c)}\right)\right].
	\end{align*}
	It is easy to verify that the first term is $ 0 $, and we now focus on the second term,
	\begin{align}
		&\nonumber\int_{a(c)}^{b(c)} f_{L}(x) d \tan ^{-1}\left(\frac{\sqrt{4 c-(x-1-c)^{2}}}{-(x+1-c)}\right)\\
		&=\int_{a(c)}^{b(c)} \frac{\left(x-\log x-1 \right)}{1+\frac{4c-(x-1-c)^{2}}{(x+1-c)^{2}}}\cdot\frac{\sqrt{4c-(x-1-c)^{2}}+\frac{(x-1-c)(x+1-c)}{\sqrt{4c-(x-1-c)^{2}}}}{(x+1-c)^{2}}dx.\label{27}
	\end{align}
	By substituting $ x=1+c-2\sqrt{c}\cos(\theta) $, we obtain
	\begin{align}
		(\ref{27})&=\nonumber\frac{1}{2}\int_{0}^{2\pi}\left(1+c-2\sqrt{c}\cos(\theta)-\log\left(1+c-2\sqrt{c}\cos(\theta) \right)-1  \right)\frac{c-\sqrt{c}\cos(\theta) }{1+c-2\sqrt{c}\cos(\theta)} d\theta\\
		&=\nonumber\frac{1}{2}\int_{0}^{2\pi}\left[1-\frac{\log\left(1+c-2\sqrt{c}\cos(\theta) \right)+1}{1+c-2\sqrt{c}\cos(\theta)}  \right] \left(c-\sqrt{c}\cos(\theta) \right)d\theta\\
		&=\frac{1}{2}\int_{0}^{2\pi}\left(c-\sqrt{c}\cos(\theta) \right)d\theta-\frac{1}{2}\int_{0}^{2\pi}\frac{\log\left(1+c-2\sqrt{c}\cos(\theta) \right)}{1+c-2\sqrt{c}\cos(\theta)}\left( c-\sqrt{c}\cos(\theta)\right)  d\theta-\label{28}\\
		&\nonumber\frac{1}{2}\int_{0}^{2\pi}\frac{c-\sqrt{c}\cos(\theta)}{1+c-2\sqrt{c}\cos(\theta)}d\theta
	\end{align}
	It is easy to obtain that the first term of (\ref{28}) is $ \pi c $; then, we consider the second term. By substituting $ \cos\theta=\frac{z+z^{-1}}{2} $, we turn it into a contour integral on $ \left|z \right|=1  $
	\begin{align*}
		&\frac{1}{2}\int_{0}^{2\pi}\frac{\log\left(1+c-2\sqrt{c}\cos(\theta) \right)}{1+c-2\sqrt{c}\cos(\theta)}\left( c-\sqrt{c}\cos\theta\right)  d\theta\\
		&=\frac{1}{2} \oint_{\left| z\right| =1} \log |1-\sqrt{c} z|^{2} \cdot \frac{c-\sqrt{c} \frac{z+z^{-1}}{2}}{1+c-2\sqrt{ c} \cdot \frac{z+z^{-1}}{2}} \frac{d z}{i z}\\
		&=\frac{1}{4 i} \oint_{\left| z\right| =1} \log |1-\sqrt{c} z|^{2} \cdot \frac{2c z-\sqrt{c}\left(z^{2}+1\right)}{(z-\sqrt{c})(-\sqrt{c}z+1)z} d z
	\end{align*}
	When $ c<1 $, $ 0 $ and $\sqrt{c}$ are poles, by using the residue theorem, the integral is $ -\pi \log(1-c)$. The same argument also holds for the third term, and the integral is $ 0 $ after some calculation.

	Therefore, $$ \frac{M}{2\pi i}\oint_{\mathcal C}f_{L}\left(z \right)\frac{\underline{m}'(z)}{\underline{m}(z)}dz= -M(c+\log(1-c)), $$ 
	and the result is still valid if $ c $ is replaces $ c_{nM} $; therefore, formula (\ref{31}) holds.

	\textbf{Now, we prove (\ref{l_{n}})}. 
	Since $ z=-\frac{1}{\underline{m}}+\frac{c}{1+\underline{m}} $, we have, for $ c>1 $,
	\begin{align*}
		\oint_{\mathcal C}f_{W}\left(z \right)\frac{\underline{m}'(z)}{\underline{m}(z)}dz=\oint_{\mathcal C_{1}}f_{W}\left(z \right)\frac{\underline{m}'(z)}{\underline{m}(z)}dz+\oint_{\mathcal C_{2}}f_{W}\left(z \right)\frac{\underline{m}'(z)}{\underline{m}(z)}dz,
	\end{align*}
	where $\mathcal C_{1}  $ is a contour that includes the interval $ ( (1-\sqrt{c})^{2}, (1+\sqrt{c})^{2}) $, and $\mathcal C_{2}  $ is a contour that includes the origin. Using $ \mathcal C_{\underline{m}} $ to denote the contour of $ \underline{m} $, we obtain
	\begin{align*}
		&\oint_{\mathcal C_{1}}f_{W}\left(z \right)\frac{\underline{m}'(z)}{\underline{m}(z)}dz=\oint_{\mathcal C_{\underline{m}}}(-\frac{1}{\underline{m}}+\frac{c}{1+\underline{m}}-1)^{2}\frac{\underline{m}'(z)}{\underline{m}(z)}\frac{dz}{d\underline{m}}d\underline{m}\\
		=&\oint_{\mathcal C_{\underline{m}}}(-\frac{1+\underline{m}}{\underline{m}}+\frac{c}{1+\underline{m}})^{2}\frac{1}{\underline{m}}d\underline{m}=\oint_{\mathcal C_{\underline{m}}}( \frac{(1+\underline{m})^{2}}{\underline{m}^{3}}+\frac{c^{2}}{(1+\underline{m})^{2}\underline{m}}-\frac{2c}{\underline{m}^{2}}  ) d\underline{m}
	\end{align*}	 
	Since the $ z $ contour cannot enclose the origin, neither can the resulting $ \underline{m} $ contour. Thus, the only pole is $ -1 $, the residue is $ -c^{2} $ by residue theorem, and we obtain the integral as $ -2\pi ic^{2}. $ 
	
	Then, we focus on the second integral $ \oint_{\mathcal C_{2}}f_{W}\left(z \right)\frac{\underline{m}'(z)}{\underline{m}(z)}dz. $ When $ z=0, $ we obtain $ \underline{m}=\frac{1}{c-1} $; since $ c>1,$ $ \frac{1}{c-1}>0. $ Both $ \underline{m}=0 $ and $ \underline{m}=-1 $ are not in the contour. Thus, the integrand $ ( \frac{(1+\underline{m})^{2}}{\underline{m}^{3}}+\frac{c^{2}}{(1+\underline{m})^{2}\underline{m}}-\frac{2c}{\underline{m}^{2}}  ) $ is analytic in the contour. The integral is $ 0 $. 
	Therefore, when $ c>1 $,  $ \frac{M}{2\pi i}\oint_{\mathcal C}f_{W}\left(z \right)\frac{\underline{m}'(z)}{\underline{m}(z)}dz=-M c^{2}$. When $ c<1, $, the contour integral $ \oint_{\mathcal C}f_{W}\left(z \right)\frac{\underline{m}'(z)}{\underline{m}(z)}dz $ reduces to $ \oint_{\mathcal C_{1}}f_{W}\left(z \right)\frac{\underline{m}'(z)}{\underline{m}(z)}dz $, and the result is also the same as above. When $ c=1 $, the result is still true by continuity in $ c $. The results above are still valid if $ c $  is replaced by $ c_{nM}. $ Therefore, the proof of (\ref{l_{n}}) is complete.
	\\
	\\
	\\
	\\
	\\
	\\

	\section{Some useful lemmas}\label{some useful lemmas}
	\begin{lemma}\label{simple result}
		If $ \bbD_{2}=\bbI_{p-M} $, then the mean function $\mu_1$ and $ \kappa_{nst} $ in the covariance function of Theorem \ref{thm2}  can be simplified from the results in \cite{wang2013sphericity} and \cite{10.1214/14-AOS1292}, i.e.,    
		$$
		\mu_{1}=\alpha_x I_{1}(f_{1})+\beta_{x} I_{2}(f_{1}),
		$$
		$$\kappa_{nst}=(\alpha_x+1)J_{1}(f_{s}, f_{t})+\beta_{x} J_{2}(f_{s}, f_{t}),$$
		\begin{align*}
			I_{1}(f_{1})&=\lim _{r \downarrow 1} \frac{1}{2 \pi i} \oint_{|z|=1} f_{1}\left(\left|1+\sqrt{c_{nM}} z\right|^{2}\right)\left[\frac{z}{z^{2}-r^{-2}}-\frac{1}{z}\right] d z,\\
			I_{2}(f_{1})&=\frac{1}{2 \pi i} \oint_{|z|=1} f_{1}\left(\left|1+\sqrt{c_{nM}} z\right|^{2}\right) \frac{1}{z^{3}} d z,\\
			J_{1}(f_{s}, f_{t})&=\lim _{r \downarrow 1} \frac{-1}{4 \pi^{2}} \oint_{\left|z_{1}\right|=1} \oint_{\left|z_{2}\right|=1} \frac{f_{s}\left(\left|1+\sqrt{c_{nM}} z_{1}\right|^{2}\right) f_{t}\left(\left|1+\sqrt{c_{nM}} z_{2}\right|^{2}\right)}{\left(z_{1}-r z_{2}\right)^{2}} d z_{1} d z_{2},\\
			J_{2}(f_{s}, f_{t})&=-\frac{1}{4 \pi^{2}} \oint_{\left|z_{1}\right|=1} \frac{f_{s}\left(\left|1+\sqrt{c_{nM}} z_{1}\right|^{2}\right)}{z_{1}^{2}} d z_{1} \oint_{\left|z_{2}\right|=1} \frac{f_{t}\left(\left|1+\sqrt{c_{nM}} z_{2}\right|^{2}\right)}{z_{2}^{2}} d z_{2}.
		\end{align*}
		
	\end{lemma}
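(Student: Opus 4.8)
The plan is to take the general formulas of Theorem \ref{thm2} (for $\kappa_{nst}$) and of Lemma \ref{lemma3} (for $\mu_{1}$) and carry out the elementary specialization to $\bbD_{2}=\bbI_{p-M}$. The key observation is that in this case $H_{2n}=F^{\bbD_{2}}=\delta_{1}$, so $F^{c_{nM},H_{2n}}$ is the standard Marchenko--Pastur law, whose companion Stieltjes transform satisfies $z=-\underline m_{2n0}(z)^{-1}+c_{nM}(1+\underline m_{2n0}(z))^{-1}$. Equivalently, with the change of variable $z=(1+\sqrt{c_{nM}}\,\zeta)(1+\sqrt{c_{nM}}\,\zeta^{-1})=|1+\sqrt{c_{nM}}\,\zeta|^{2}$, $\zeta$ ranging over a circle, one has the rational parametrization $\underline m_{2n0}(z)=-(1+\sqrt{c_{nM}}\,\zeta)^{-1}$ and $dz=\sqrt{c_{nM}}\,(\zeta^{2}-1)\zeta^{-2}\,d\zeta$. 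Moreover every $H_{2n}$-integral collapses to its integrand at $t=1$: e.g. $\int\underline m_{2n0}^{3}(z)t^{2}(1+t\underline m_{2n0}(z))^{-3}\,dH_{2n}(t)=\underline m_{2n0}^{3}(z)(1+\underline m_{2n0}(z))^{-3}$. Since the expressions for $\mu_{1}$ and for $\kappa_{nst}$ in Theorem \ref{thm2} are already written purely in terms of $\underline m_{2n0}$ and $H_{2n}$ (no eigenvector quantities survive under Assumptions \ref{ass5}--\ref{ass6}), the whole reduction is a Stieltjes-transform/residue computation.

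For the mean: substitute the collapsed $H_{2n}$-integrals into the two-term formula for $\mu_{1}$ of Lemma \ref{lemma3}, then apply the parametrization above. A short computation gives $\underline m_{2n0}/(1+\underline m_{2n0})=-(\sqrt{c_{nM}}\,\zeta)^{-1}$, hence $c_{nM}\underline m_{2n0}^{2}(1+\underline m_{2n0})^{-2}=\zeta^{-2}$ and $c_{nM}\underline m_{2n0}^{3}(1+\underline m_{2n0})^{-3}=-(\sqrt{c_{nM}}\,\zeta^{3})^{-1}$. Plugging these in, all $\sqrt{c_{nM}}$ and $\zeta^{2}-1$ factors cancel in the $\beta_{x}$-term, leaving $\beta_{x}\cdot\frac{1}{2\pi i}\oint_{|\zeta|=1}f_{1}(|1+\sqrt{c_{nM}}\,\zeta|^{2})\zeta^{-3}\,d\zeta=\beta_{x}I_{2}(f_{1})$; the $\alpha_{x}$-term, carrying the extra denominator factor $1-\alpha_{x}c_{nM}\underline m_{2n0}^{2}(1+\underline m_{2n0})^{-2}=1-\alpha_{x}\zeta^{-2}$, reduces, after the $r\downarrow1$ regularization of the contour, to $\alpha_{x}I_{1}(f_{1})$; this is exactly the computation in \cite{wang2013sphericity} combined with the substitution principle of \cite{10.1214/14-AOS1292}. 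Hence $\mu_{1}=\alpha_{x}I_{1}(f_{1})+\beta_{x}I_{2}(f_{1})$.

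For the covariance kernel, treat the three terms of $\kappa_{nst}$ in Theorem \ref{thm2} in turn. (i) In $\frac{1}{4\pi^{2}}\oint\oint f_{s}(z_{1})f_{t}(z_{2})(\underline m_{2n0}(z_{1})-\underline m_{2n0}(z_{2}))^{-2}\,d\underline m_{2n0}(z_{1})\,d\underline m_{2n0}(z_{2})$, write $u_{i}=\underline m_{2n0}(z_{i})=-(1+\sqrt{c_{nM}}\,\zeta_{i})^{-1}$; then $\tfrac{du_{1}\,du_{2}}{(u_{1}-u_{2})^{2}}=\tfrac{d\zeta_{1}\,d\zeta_{2}}{(\zeta_{1}-\zeta_{2})^{2}}$, and with $\zeta_{1},\zeta_{2}$ taken on circles of radii whose ratio $r\downarrow1$ this is (up to the standard sign) $J_{1}(f_{s},f_{t})$. (ii) The $\beta_{x}$-term, with the $H_{2n}$-integral collapsed at $t=1$ and the parametrization applied, factorizes into a product of single contour integrals of the form $\oint f\,\zeta^{-2}\,d\zeta$, giving $\beta_{x}J_{2}(f_{s},f_{t})$. (iii) For the last term one uses $a_{n}(z_{1},z_{2})=\alpha_{x}\big(1+\underline m_{2n0}(z_{1})\underline m_{2n0}(z_{2})(z_{1}-z_{2})/(\underline m_{2n0}(z_{2})-\underline m_{2n0}(z_{1}))\big)$ together with the explicit MP parametrization (the same algebraic simplification that appears in the proof of Theorem \ref{thm3}, with $\tilde c\to c_{nM}$) to show that $\frac{\partial^{2}}{\partial z_{1}\partial z_{2}}\log(1-a_{n})$ integrated against $f_{s}f_{t}$ equals $\alpha_{x}$ times the integral in (i); adding (i) and (iii) gives $(\alpha_{x}+1)J_{1}(f_{s},f_{t})$. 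Summation yields $\kappa_{nst}=(\alpha_{x}+1)J_{1}(f_{s},f_{t})+\beta_{x}J_{2}(f_{s},f_{t})$.

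I expect the main obstacle to be step (iii): verifying that the $\log(1-a_{n})$ double-contour integral collapses to $\alpha_{x}$ times the leading bilinear form. This needs the algebraic identity reducing $a_{n}$ under the MP parametrization, a careful residue evaluation of $\partial_{z_{1}}\partial_{z_{2}}\log(1-a_{n})$, and consistent bookkeeping of the non-overlapping contours $\mathcal C_{1},\mathcal C_{2}$ and of the $r\downarrow1$ regularization shared with term (i); the limiting checks $\alpha_{x}=1$ (term (iii) equals term (i)) and $\alpha_{x}=0$ (term (iii) vanishes) pin down the coefficient $\alpha_{x}+1$. The remaining pieces — the mean and terms (i)--(ii) of the covariance — are routine once the parametrization is in place.
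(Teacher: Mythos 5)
Your proposal is correct in substance and follows exactly the route the paper implicitly relies on: the paper gives no proof of this lemma at all, simply citing \cite{wang2013sphericity} and \cite{10.1214/14-AOS1292}, and your Marchenko--Pastur parametrization $z=|1+\sqrt{c_{nM}}\,\zeta|^2$, $\underline m_{2n0}(z)=-(1+\sqrt{c_{nM}}\,\zeta)^{-1}$, with every $H_{2n}$-integral collapsed at $t=1$, is precisely the computation carried out in those references. Your key simplifications check out, in particular $\underline m_{2n0}/(1+\underline m_{2n0})=-(\sqrt{c_{nM}}\,\zeta)^{-1}$ and, for step (iii), the identity $a_n(z_1,z_2)=\alpha_x/(\zeta_1\zeta_2)$, from which $\log(1-a_n)=\log(\zeta_1-\zeta_2^{-1})+\log\zeta_2$ and the reflection $\zeta_2\mapsto\zeta_2^{-1}$ give $\alpha_x$ times term (i). The only caveat is the sign you wave away as ``the standard sign'': tracking it carefully, the three terms of $\kappa_{nst}$ as defined in Theorem \ref{thm2} come out as $-(\alpha_x+1)J_1-\beta_xJ_2$ rather than $+(\alpha_x+1)J_1+\beta_xJ_2$, which is consistent with the bulk variance entering $\sigma_l^2$ as $-\kappa_{nll}$ and with the positive variances in Theorems \ref{thm3}--\ref{thm4}; this discrepancy is an inconsistency in the paper's own statement of the lemma rather than a flaw in your argument, but your write-up should resolve the orientation/sign bookkeeping explicitly rather than leaving it implicit.
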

	
	\begin{lemma}\label{inoutexchange}
		Note that for any matrix $\bbZ$, 
		\begin{align*}
			\bbZ\left(\bbZ^{\ast}\bbZ-\lambda \bbI \right)^{-1}\bbZ^{\ast}=\bbI+\lambda\left(\bbZ\bbZ^{\ast}-\lambda \bbI \right)^{-1}.
		\end{align*}
	\end{lemma}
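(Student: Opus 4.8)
The plan is to derive this identity, which is the standard \emph{push-through} (or resolvent) identity, from the elementary commutation relation between $\bbZ^{\ast}\bbZ$ and $\bbZ\bbZ^{\ast}$, and then rearrange. First I would record the purely algebraic fact $\bbZ(\bbZ^{\ast}\bbZ)=(\bbZ\bbZ^{\ast})\bbZ$, valid for any (possibly rectangular) $\bbZ$ by associativity of matrix multiplication. Subtracting $\lambda\bbZ$ from both sides gives
\[
\bbZ(\bbZ^{\ast}\bbZ-\lambda\bbI)=(\bbZ\bbZ^{\ast}-\lambda\bbI)\bbZ ,
\]
where on the left $\bbI$ has the size of $\bbZ^{\ast}\bbZ$ and on the right the size of $\bbZ\bbZ^{\ast}$. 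For any $\lambda$ outside the spectra of $\bbZ^{\ast}\bbZ$ and $\bbZ\bbZ^{\ast}$ (these two matrices share the same nonzero eigenvalues, so together with $\lambda\neq 0$ in the rectangular case this is a single condition), both resolvents $(\bbZ^{\ast}\bbZ-\lambda\bbI)^{-1}$ and $(\bbZ\bbZ^{\ast}-\lambda\bbI)^{-1}$ exist; multiplying the last display on the left by $(\bbZ\bbZ^{\ast}-\lambda\bbI)^{-1}$ and on the right by $(\bbZ^{\ast}\bbZ-\lambda\bbI)^{-1}$ then yields the commuted form
\[
(\bbZ\bbZ^{\ast}-\lambda\bbI)^{-1}\bbZ=\bbZ(\bbZ^{\ast}\bbZ-\lambda\bbI)^{-1}.
\]

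Next I would right-multiply this identity by $\bbZ^{\ast}$, obtaining $(\bbZ\bbZ^{\ast}-\lambda\bbI)^{-1}\bbZ\bbZ^{\ast}=\bbZ(\bbZ^{\ast}\bbZ-\lambda\bbI)^{-1}\bbZ^{\ast}$, and then substitute $\bbZ\bbZ^{\ast}=(\bbZ\bbZ^{\ast}-\lambda\bbI)+\lambda\bbI$ on the left-hand side, so that
\[
\bbZ(\bbZ^{\ast}\bbZ-\lambda\bbI)^{-1}\bbZ^{\ast}=(\bbZ\bbZ^{\ast}-\lambda\bbI)^{-1}\big[(\bbZ\bbZ^{\ast}-\lambda\bbI)+\lambda\bbI\big]=\bbI+\lambda(\bbZ\bbZ^{\ast}-\lambda\bbI)^{-1},
\]
which is exactly the claimed identity. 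Alternatively, one can verify the target identity directly by left-multiplying it by $(\bbZ\bbZ^{\ast}-\lambda\bbI)$ and checking that both sides collapse to $(\bbZ\bbZ^{\ast}-\lambda\bbI)\bbZ(\bbZ^{\ast}\bbZ-\lambda\bbI)^{-1}\bbZ^{\ast}$ after one application of the commutation relation.

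There is essentially no substantive obstacle here; the proof is two lines of linear algebra. The only points deserving a word of care are stating the range of $\lambda$ for which the two resolvents are defined (outside the spectra, hence in particular on the contour $\mathcal{C}$ or for $\Im z>0$ in all the applications of the lemma in this paper), and, when $\bbZ$ is rectangular, keeping track of the two different sizes of the identity matrices appearing on the two sides of the identity. I would include a sentence flagging both.
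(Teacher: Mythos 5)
Your proof is correct; the paper states this lemma without proof, and your two-step derivation (the commutation relation $\bbZ(\bbZ^{\ast}\bbZ-\lambda\bbI)=(\bbZ\bbZ^{\ast}-\lambda\bbI)\bbZ$ followed by inverting, right-multiplying by $\bbZ^{\ast}$, and splitting $\bbZ\bbZ^{\ast}=(\bbZ\bbZ^{\ast}-\lambda\bbI)+\lambda\bbI$) is the standard argument one would supply. Your remarks on the admissible range of $\lambda$ and on the differing sizes of the two identity matrices in the rectangular case are appropriate and do not affect correctness.
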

	
	\section{Tables for simulation studies}\label{other simulations}
	In this section, we present addtional simulation tables regarding empirical probability of rejecting alternative hypotheses in Section \ref{simulation} in the main file.
	\begin{table*}
		\caption{Empirical probability of rejecting $ H_{1} $ at significance level $ \xi=0.01 $ under assumptions of Gaussian, Gamma, and Uniform distributions }
		\label{tablepowerd1h1l2}
		\begin{tabular}{@{}lrrrrrrrrrrc@{}}
			\hline &&\multicolumn{3}{c}{($ Dt_{1},H_1 $)} & \multicolumn{3}{c}{($ Dt_{2},H_1 $)} & \multicolumn{3}{c}{($ Dt_{3},H_1 $)}\\
			\cline{3-11}
			test & \multicolumn{1}{c}{
				(p,n) } 
			&\multicolumn{1}{c}{$\alpha_1=3$} & \multicolumn{1}{c}{$\alpha_1=5$} & \multicolumn{1}{c}{$\alpha_1=7$} &
			\multicolumn{1}{c}{$\alpha_1=3$} & \multicolumn{1}{c}{$\alpha_1=5$} & \multicolumn{1}{c}{$\alpha_1=7$} &
			\multicolumn{1}{c}{$\alpha_1=3$} & \multicolumn{1}{c}{$\alpha_1=5$} & \multicolumn{1}{c}{$\alpha_1=7$}
			\\
			\hline
			$CLRT$  & (50,150)  &  0.5142 & 0.9954 & 1  &0.4896&0.9896&1&0.5130&0.9995&1 \\
			&(100,300) & 0.5056  & 0.9992 & 1 &0.4954&0.9972&1&0.5134&0.9999&1 \\
			&(200,600)  & 0.5178  & 0.9996 & 1&0.5085&0.9990&1&0.5210&1&1 \\
			$CNTT$   & (50,150)  & 0.9814  & 1 & 1 &0.9287&1&1&0.9984&1&1\\
			&(100,300)   &  0.9938 & 1 & 1 &0.9662&1&1&0.9995&1&1\\
			&(200,600)   & 0.9985 & 1 & 1 &0.9873&1&1&1&1&1\\
			$RLRT$    & (50,150) & 0.9983  & 1 & 1 &0.9947&1&1&1&1&1\\
			&(100,300)   & 1  & 1 & 1 &1&1&1&1&1&1\\
			&(200,600) & 1 & 1 & 1 &1&1&1&1&1&1\\
			\hline
		\end{tabular}
	\end{table*}	
	
	\begin{table*}
		\caption{Empirical probability of rejecting $ H_{2} $ at significance level $ \xi=0.01 $ under assumptions of Gaussian, Gamma, and Uniform distributions }
		\label{tablepowerd2h2l2}
		\begin{tabular}{@{}lrrrrrrrrrr@{}}
			\hline &&\multicolumn{3}{c}{($ Dt_{1},H_2 $)} & \multicolumn{3}{c}{($ Dt_{2},H_2 $)} & \multicolumn{3}{c}{($ Dt_{3},H_2 $)}\\
			\cline{3-11}
			test & \multicolumn{1}{c}{
				(p,n) } 
			&\multicolumn{1}{c}{$\alpha_1=3$} & \multicolumn{1}{c}{$\alpha_1=5$} & \multicolumn{1}{c}{$\alpha_1=7$} &
			\multicolumn{1}{c}{$\alpha_1=3$} & \multicolumn{1}{c}{$\alpha_1=5$} & \multicolumn{1}{c}{$\alpha_1=7$} &
			\multicolumn{1}{c}{$\alpha_1=3$} & \multicolumn{1}{c}{$\alpha_1=5$} & \multicolumn{1}{c}{$\alpha_1=7$}
			\\
			\hline
			$CLRT$  & (50,150)  &  0.9387 & 1 & 1&0.9045&1&1&0.9574&1&1  \\
			&(100,300) & 0.9496  & 1 & 1&0.9330&1&1&0.9666&1&1 \\
			&(200,600)  & 0.9624  & 1 & 1& 0.9553&1&1&0.9710&1&1\\
			$CNTT$   & (50,150)  & 1  & 1 & 1 & 0.9998  & 1 & 1& 1  & 1 & 1\\
			&(100,300)   &  1 & 1 & 1 & 1  & 1 & 1& 1  & 1 & 1\\
			&(200,600)   & 1 & 1 & 1 & 1  & 1 & 1& 1  & 1 & 1\\
			$RLRT$    & (50,150) & 1  & 1 & 1 & 1  & 1 & 1& 1  & 1 & 1\\
			&(100,300)   & 1  & 1 & 1 & 1  & 1 & 1& 1  & 1 & 1\\
			&(200,600) & 1 & 1 & 1 & 1  & 1 & 1& 1  & 1 & 1\\
			\hline
		\end{tabular}
	\end{table*}
	
	\begin{table*}
		\caption{Empirical probability of rejecting $ H_{4} $ at significance level $ \xi=0.05 $ under assumptions of Gaussian, Gamma, and Uniform distributions }
		\label{tablepowerd1h4l1}
		\begin{tabular}{@{}lrrrrrrrrrr@{}}
			\hline &&\multicolumn{3}{c}{($ Dt_{1},H_4 $)} & \multicolumn{3}{c}{($ Dt_{2},H_4 $)} & \multicolumn{3}{c}{($ Dt_{3},H_4 $)}\\
			\cline{3-11}
			test & \multicolumn{1}{c}{
				(p,n) } 
			&\multicolumn{1}{c}{$\alpha_1=3$} & \multicolumn{1}{c}{$\alpha_1=5$} & \multicolumn{1}{c}{$\alpha_1=7$} &
			\multicolumn{1}{c}{$\alpha_1=3$} & \multicolumn{1}{c}{$\alpha_1=5$} & \multicolumn{1}{c}{$\alpha_1=7$} &
			\multicolumn{1}{c}{$\alpha_1=3$} & \multicolumn{1}{c}{$\alpha_1=5$} & \multicolumn{1}{c}{$\alpha_1=7$}
			\\
			\hline
			$CLRT$  & (50,150)  &  0.7288 & 0.9991 & 1  &0.7074&0.9989&1&0.7269&0.9993&1 \\
			&(100,300) & 0.7397  & 0.9999 & 1 &0.7364 &0.9997&1&0.7411&0.9997&1 \\
			&(200,600)  & 0.7568  & 1 & 1&0.7484&1&1&0.7649&1&1 \\
			$CNTT$   & (50,150)  & 0.9917  & 1 & 1 &0.9821&1&1&0.9974&1&1\\
			&(100,300)   &  0.9985 & 1 & 1 &0.9951&1&1&0.9997&1&1\\
			&(200,600)   & 0.9997 & 1 & 1 &0.9983&1 &1&1&1&1\\
			$RLRT$    & (50,150) & 0.9997  & 1 & 1 &0.9992&1&1&0.9996&1&1\\
			&(100,300)   & 1  & 1 & 1 &1&1&1&1&1&1\\
			&(200,600) & 1 & 1 & 1 &1&1&1&1&1&1\\
			\hline
		\end{tabular}
	\end{table*}	
	
	\begin{table*}
		\caption{Empirical probability of rejecting $ H_{4} $ at significance level $ \xi=0.01 $ under assumptions of Gaussian, Gamma, and Uniform distributions }
		\label{tablepowerd1h4l2}
		\begin{tabular}{@{}lrrrrrrrrrr@{}}
			\hline &&\multicolumn{3}{c}{($ Dt_{1},H_4 $)} & \multicolumn{3}{c}{($ Dt_{2},H_4 $)} & \multicolumn{3}{c}{($ Dt_{3},H_4 $)}\\
			\cline{3-11}
			test & \multicolumn{1}{c}{
				(p,n) } 
			&\multicolumn{1}{c}{$\alpha_1=3$} & \multicolumn{1}{c}{$\alpha_1=5$} & \multicolumn{1}{c}{$\alpha_1=7$} &
			\multicolumn{1}{c}{$\alpha_1=3$} & \multicolumn{1}{c}{$\alpha_1=5$} & \multicolumn{1}{c}{$\alpha_1=7$} &
			\multicolumn{1}{c}{$\alpha_1=3$} & \multicolumn{1}{c}{$\alpha_1=5$} & \multicolumn{1}{c}{$\alpha_1=7$}
			\\
			\hline
			$CLRT$  & (50,150)  &  0.5068 & 0.9967 & 1  &0.5043 &0.9952 &1 &0.5101&0.9966&1 \\
			&(100,300) & 0.5137  & 0.9996 & 1 &0.5082 &0.9987&1&0.5193&0.9996&1 \\
			&(200,600)  & 0.5155  & 0.9995 & 1&0.5192 &0.9997&1&0.5100&0.9998&1 \\
			$CNTT$   & (50,150)  & 0.9782  & 1 & 1 &0.9469&1&1&0.9934&1&1\\
			&(100,300)   &  0.9935 & 1 & 1 &0.9803&1&1&0.9988&1&1\\
			&(200,600)   & 0.9984 & 1 & 1 &0.9921&1 &1&0.9997&1&1\\
			$RLRT$    & (50,150) & 0.9978  & 1 & 1 &0.9991&1&1&0.9981&1&1\\
			&(100,300)   & 1  & 1 & 1 &1&1&1&1&1&1\\
			&(200,600) & 1 & 1 & 1 &1&1&1&1&1&1\\
			\hline
		\end{tabular}
	\end{table*}	
	
	\begin{table*}
		\caption{Empirical probability of rejecting $ H_{5} $ at significance level $ \xi=0.05 $ under assumptions of Gaussian, Gamma, and Uniform distributions }
		\label{tablepowerd1h5l1}
		\begin{tabular}{@{}lrrrrrrrrrr@{}}
			\hline &&\multicolumn{3}{c}{($ Dt_{1},H_5 $)} & \multicolumn{3}{c}{($ Dt_{2},H_5 $)} & \multicolumn{3}{c}{($ Dt_{3},H_5 $)}\\
			\cline{3-11}
			test & \multicolumn{1}{c}{
				(p,n) } 
			&\multicolumn{1}{c}{$\alpha_1=3$} & \multicolumn{1}{c}{$\alpha_1=5$} & \multicolumn{1}{c}{$\alpha_1=7$} &
			\multicolumn{1}{c}{$\alpha_1=3$} & \multicolumn{1}{c}{$\alpha_1=5$} & \multicolumn{1}{c}{$\alpha_1=7$} &
			\multicolumn{1}{c}{$\alpha_1=3$} & \multicolumn{1}{c}{$\alpha_1=5$} & \multicolumn{1}{c}{$\alpha_1=7$}
			\\
			\hline
			$CLRT$  & (50,150)  &  0.9803 & 1 & 1  &0.9778 &1&1&0.9819&1&1 \\
			&(100,300) & 0.9889  & 1 & 1 &0.9864 &1&1&0.9861&1&1 \\
			&(200,600)  & 0.9928  & 1 & 1&0.9913&1&1&0.9908&1&1 \\
			$CNTT$   & (50,150)  & 1  & 1 & 1 &1&1&1&1&1&1\\
			&(100,300)   &  1 & 1 & 1 &1&1&1&1&1&1\\
			&(200,600)   & 1 & 1 & 1 &1&1 &1&1&1&1\\
			$RLRT$    & (50,150) & 1  & 1 & 1 &1&1&1&1&1&1\\
			&(100,300)   & 1  & 1 & 1 &1&1&1&1&1&1\\
			&(200,600) & 1 & 1 & 1 &1&1&1&1&1&1\\
			\hline
		\end{tabular}
	\end{table*}	
	
	\begin{table*}
		\caption{Empirical probability of rejecting $ H_{5} $ at significance level $ \xi=0.01 $ under assumptions of Gaussian, Gamma, and Uniform distributions }
		\label{tablepowerd1h5l2}
		\begin{tabular}{@{}lrrrrrrrrrr@{}}
			\hline &&\multicolumn{3}{c}{($ Dt_{1},H_5 $)} & \multicolumn{3}{c}{($ Dt_{2},H_5 $)} & \multicolumn{3}{c}{($ Dt_{3},H_5 $)}\\
			\cline{3-11}
			test & \multicolumn{1}{c}{
				(p,n) } 
			&\multicolumn{1}{c}{$\alpha_1=3$} & \multicolumn{1}{c}{$\alpha_1=5$} & \multicolumn{1}{c}{$\alpha_1=7$} &
			\multicolumn{1}{c}{$\alpha_1=3$} & \multicolumn{1}{c}{$\alpha_1=5$} & \multicolumn{1}{c}{$\alpha_1=7$} &
			\multicolumn{1}{c}{$\alpha_1=3$} & \multicolumn{1}{c}{$\alpha_1=5$} & \multicolumn{1}{c}{$\alpha_1=7$}
			\\
			\hline
			$CLRT$  & (50,150)  &  0.9321 & 1 & 1  &0.9218 &1 &1 &0.9418&1&1 \\
			&(100,300) & 0.9530 & 1 & 1 &0.9521 &1&1&0.9528&1&1 \\
			&(200,600)  & 0.9610  & 1 & 1&0.9586 &1&1&0.9638&1&1 \\
			$CNTT$   & (50,150)  & 1 & 1 & 1 &0.9998&1&1&1&1&1\\
			&(100,300)   &  1 & 1 & 1 &1&1&1&1&1&1\\
			&(200,600)   & 1 & 1 & 1 &1&1 &1&1&1&1\\
			$RLRT$    & (50,150) & 1 & 1 & 1 &1&1&1&1&1&1\\
			&(100,300)   & 1  & 1 & 1 &1&1&1&1&1&1\\
			&(200,600) & 1 & 1 & 1 &1&1&1&1&1&1\\
			\hline
		\end{tabular}
	\end{table*}	
	
	\begin{table*}
		\caption{Empirical probability of rejecting $ H_{6} $ at significance level $ \xi=1\times10^{-4} $ under assumptions of Gaussian, Gamma, and Uniform distributions  }
		\label{tablepowerd3h6l3}
		\begin{tabular}{@{}lrrrrrrrrrr@{}}
			\hline &&\multicolumn{3}{c}{($ Dt_{1},H_6 $)} & \multicolumn{3}{c}{($ Dt_{2},H_6 $)} & \multicolumn{3}{c}{($ Dt_{3},H_6 $)}\\
			\cline{3-11} &&
			\multicolumn{9}{c}{ $ \alpha_1 $}\\
			\cline{3-11}
			test & \multicolumn{1}{c}{
				(p,n) } 
			&\multicolumn{1}{c}{$2.2$} & \multicolumn{1}{c}{$2.5$} & \multicolumn{1}{c}{$2.8$} &
			\multicolumn{1}{c}{$2.2$} & \multicolumn{1}{c}{$2.5$} & \multicolumn{1}{c}{$2.8$} &
			\multicolumn{1}{c}{$2.2$} & \multicolumn{1}{c}{$2.5$} & \multicolumn{1}{c}{$2.8$}
			\\
			\hline
			$CLRT$  & (50,150)  &  0.3876 & 0.8702 & 0.9943 &0.6033&0.9347&0.9971&0.3881&0.8639&0.9946 \\
			&(100,300) & 0.3849  & 0.8870 & 0.9980&0.6069&0.9616 &0.9993&0.3856&0.8917&0.9985 \\
			&(200,600)  & 0.3761  & 0.9038 & 0.9978&0.6272&0.9698&0.9996 &0.3842&0.9026&0.9990\\
			$CNTT$   & (50,150)  & 0.9974  & 1 & 1 &0.9969&0.9999&1&0.9998&1&1\\
			&(100,300)   &  0.9996 & 1 & 1 &0.9994&1&1&1&1&1\\
			&(200,600)   & 0.9998 & 1 & 1 &1&1&1&1&1&1\\
			$RLRT$    & (50,150) & 0.8845 & 0.9972 & 1 &0.9111&0.9979&1&0.8718&0.9962&1\\
			&(100,300)   & 0.9833  & 1 & 1 &0.9858&1&1&0.9802&1&1\\
			&(200,600) & 0.9994 & 1 & 1 &0.9995&1&1&0.9995&1&1\\
			\hline
		\end{tabular}
	\end{table*}
	
	\clearpage

\end{document}